\documentclass[leqno]{amsart}
\usepackage{amssymb}
\usepackage{mathrsfs}
\usepackage{amsmath, amsfonts, vmargin, enumerate}
\usepackage{graphics}
\usepackage{color}
\usepackage{verbatim}
\usepackage{amsthm}
\usepackage{latexsym, bm}
\usepackage{euscript}
\usepackage{dsfont}

\input xypic
\xyoption {all}

\makeatletter





\newtheorem{thm}{Theorem}[section]
\newtheorem{prop}[thm]{Proposition}
\newtheorem{cor}[thm]{Corollary}
\newtheorem{lem}[thm]{Lemma}
\newtheorem{defi}[thm]{Definition}
\newtheorem{remark}[thm]{Remark}
\newtheorem{example}[thm]{Example}
\newtheorem{pb}[thm]{Problem}
\newtheorem{conj}[thm]{Conjecture}

\newenvironment{rk}{\begin{remark}\rm}{\end{remark}}

\numberwithin{equation}{section}

\newcommand{\real}{{\mathbb R}}
\newcommand{\nat}{{\mathbb N}}
\newcommand{\ent}{{\mathbb Z}}
\newcommand{\com}{{\mathbb C}}
\newcommand{\un}{{\mathds {1}}}

\newcommand{\T}{{\mathbb T}}
\newcommand{\I}{{\mathbb I}}

\newcommand{\A}{{\mathcal A}}

\renewcommand{\H}{{\mathcal H}}

\newcommand{\M}{{\mathcal M}}
\newcommand{\N}{{\mathcal N}}

\newcommand{\BMO}{{\rm BMO}}

\renewcommand{\a}{\alpha}

\newcommand{\Ga}{\Gamma}
\newcommand{\D}{\Delta}

\newcommand{\e}{\varepsilon}

\newcommand{\s}{\sigma}

\newcommand{\ot}{\otimes}

\newcommand{\8}{\infty}
\newcommand{\el}{\ell}

\newcommand{\la}{\langle}
\newcommand{\ra}{\rangle}
\newcommand{\wt}{\widetilde}
\newcommand{\wh}{\widehat}
\newcommand{\n}{\noindent}

\newcommand{\les}{\lesssim}

\newcommand{\Om}{\Omega}
\newcommand{\be}{\begin{eqnarray*}}
\newcommand{\ee}{\end{eqnarray*}}
\newcommand{\beq}{\begin{equation}}
\newcommand{\eeq}{\end{equation}}
\newcommand{\beqn}{\begin{equation*}}
\newcommand{\eeqn}{\end{equation*}}

\begin{document}

\title{Characterizations of operator-valued Hardy spaces and applications to harmonic analysis on quantum tori}
\author{Runlian Xia}
\address{School of Mathematics and Statistics, Wuhan University, Wuhan 430072, China, and Laboratoire de Math{\'e}matiques, Universit{\'e} de Franche-Comt{\'e},
25030 Besan\c{c}on Cedex, France}
\email{runlian.xia@edu.univ-fcomte.fr}

\thanks{{\it 2000 Mathematics Subject Classification:} Primary: 46L52, 42B30. Secondary: 46L07, 47L65}

\thanks{{\it Key words:} Noncommutative $L_p$-spaces, operator-valued Hardy spaces, BMO, Carleson measures, Calder\'on-Zygmund operators, square functions, characterizations, Poisson integral, Quantum tori}

\author{Xiao Xiong}
\address{Laboratoire de Math{\'e}matiques, Universit{\'e} de Franche-Comt{\'e},
25030 Besan\c{c}on Cedex, France}
\email{xiao.xiong@univ-fcomte.fr}

\author{Quanhua Xu}
\address{School of Mathematics and Statistics, Wuhan University, Wuhan 430072, China, and Laboratoire de Math{\'e}matiques, Universit{\'e} de Franche-Comt{\'e},
25030 Besan\c{c}on Cedex, France, and Institut Universitaire de France}
\email{qxu@univ-fcomte.fr}

\date{}

\maketitle

\markboth{R. Xia , X. Xiong, Q. Xu}%
{Operator-valued Hardy spaces}


\begin{abstract}
 This paper studies the operator-valued Hardy spaces introduced and studied by Tao Mei. Our principal result shows that the Poisson kernel in Mei's definition of these spaces can be replaced by any reasonable test function. As an application, we get a general characterization of Hardy spaces on quantum tori. The latter characterization plays a key role in our recent study of Triebel-Lizorkin spaces on quantum tori.
\end{abstract}

\bigskip



\section{Introduction and main results}


This paper is devoted to the study of operator-valued Hardy spaces introduced by Mei \cite{Mei2007}. Motivated by the development of noncommutative martingale inequalities (see, for instance, \cite{ju-doob, JX2003, jx-erg, LPP1991, PR2006, PX1997, Ran2002, Ran2007}) and  the Littlewood-Paley-Stein theory of quantum Markov semigroups (cf. \cite{JLX2006, JM2011, JM2010}), Mei developed a remarkable theory of operator-valued Hardy spaces on $\real^d$. These spaces are shown to be very useful for many aspects of noncommutative harmonic analysis (cf. e.g. \cite{JMP2014, JMP2015}). They are defined by the Littlewood-Paley $g$-function or Lusin area integral  function associated to the Poisson kernel. However, it is a classical result in the scalar case that the Poisson kernel does not play any special role and can be replaced by any (reasonable) test function with mild conditions. This extension is not only interesting of its own right but also crucial for applications; for instance, it plays an important role in the part of harmonic analysis related to the Littlewood-Paley decomposition as well as in the applications of harmonic analysis to PDE. 

Recently, we were led to extending this classical result to the noncommutative setting in our study of Triebel-Lizorkin spaces on quantum tori  in \cite{XXY} (see also the announcement \cite{XXYa}). This noncommutative extension is a key ingredient for the part of  \cite{XXY} on Triebel-Lizorkin spaces. To our best knowledge, all existing proofs of this result use maximal functions in a crucial way. Because of the lack of the noncommutative analogue of the pointwise maximal function, they do not extend to the operator-valued setting.  We will investigate the problem via duality as in Mei's work \cite{Mei2007}, combined with   the operator-valued Calder\'on-Zygmund theory. We show that the main arguments of \cite{Mei2007} can be adapted to general test functions in place of the Poisson kernel. This adaptation sometimes is quite straightforward, sometimes requires significantly extra efforts.  One of the major differences is the lack of harmonicity of the convolution function by a general test kernel. This harmonicity is useful for some arguments in \cite{Mei2007}; for example, it permits one to easily see the majoration of the Littlewood-Paley (radial) square function  by the Lusin  (conic) square function. In the general case, we have a variant of this result whose proof is, however, more elaborated. It should be also pointed out that both radial and conic square functions are  important for the theory: the former is simpler and readily extends to the setting of semigroups; because of the non-tangential nature of the cone used,  the latter controls other related functions and is required for the $H_1$-BMO duality and atomic decomposition. 

We would like to emphasize that the approach developed here seems new even in the scalar case.  However, it presents a drawback: due to its duality nature, it does not allow us to handle Hardy spaces $\H_p$ for $p<1$, in contrast with the classical approach by maximal functions.

The results proved and techniques developed in this paper are crucial tools in a forthcoming work \cite{Xia} of the first named author on the localization of operator-valued Hardy spaces on $\real^d$  and their applications to pseudo-differential operators. They will also play an important role in our ongoing project on operator-valued Triebel-Lizorkin spaces on $\real^d$. Like in the classical case, the latter spaces, together with the accompanying classes of Sobolev and Besov spaces, will be central objects in the study of pseudo-differential operators in the noncommutative setting. In the same spirit, one  might naturally expect that the outcome of the present investigation would be useful in the very fresh but promising direction of noncommutative PDEs.

\medskip

To state our main results, we require some preliminaries on the noncommutative $L_p$ spaces and operator-valued Hardy spaces.


\subsection{Noncommutative $L_p$-spaces.}


Let $\M$ be  a von Neumann algebra equipped with a
normal semifinite faithful trace $\tau$; for $1\le p\le\8$, let $L_p(\M)$ be the noncommutative $L_p$-space associated to $(\M,\tau)$.
The norm of  $L_p(\M)$ will be often denoted simply by $\|\,\|_p$. But if different  $L_p$-spaces  appear in a same context, we will sometimes precise the respective $L_p$-norms in order to avoid possible ambiguity. The reader is referred to \cite{PX2003} and  \cite{Xu2007} for more information on noncommutative
$L_p$-spaces. Like the classical $L_p$-spaces, noncommutative $L_p$-spaces behave well with respect to interpolation.  For instance,  for $1\le p_0<p_1\leq \infty$ and $0<\eta<1$, we have
 $$\big( L_{p_0}(\M),\, L_{p_1}(\M) \big)_{\eta}= L_{p}(\M)\;\text{ with equal norms},$$
where $\frac{1}{p}=\frac{1-\eta}{p_0}+\frac{\eta}{p_1}$ and $(\cdot\,,\,\cdot)_\eta$ denotes the complex interpolation method (see  \cite{BL1976} for interpolation theory).

We will need Hilbert space-valued noncommutative $L_p$-spaces.
Let $H$ be a Hilbert space and  $v \in H$ with $\|v\|=1$. Let  $p_v$ be the orthogonal projection onto
the one-dimensional subspace generated by $v$. Define
 $$L_p(\M; H^{r})=(p_v\ot1_{\M}) L_p(B(H)\overline\ot\M)\;\textrm{ and }\;
 L_p(\M; H^{c})= L_p(B(H)\overline\ot\M)(p_v\ot1_{\M}),$$
where the tensor product $B(H)\overline\ot\M$ is equipped with the tensor trace while $B(H)$ is equipped with the usual trace. These are the row and column noncommutative $L_p$-spaces. For $f\in L_p(\M; H^c)$,
 $$\|f\|_{L_p(\M;H^c)}=\|(f^*f)^{\frac{1}{2}}\|_{L_p(\M)}.$$
We have a similar formula for the row space by passing to adjoints: $f\in L_p(\M; H^{r})$ iff $f^*\in L_p(\M; H^c)$; and  $\|f\|_{L_p(\M;H^r)}=\|f^*\|_{L_p(\M;H^c)}$. It is clear that $ L_p(\M; H^c)$ and $ L_p(\M; H^r)$ are 1-complemented subspaces of $L_p(B(H)\overline\ot\M)$ for any $p$. Thus they also form an interpolation scale with respect to the complex interpolation method:
For $1\leq p_0, p_1\leq\8$ and $0<\eta<1$, we have
 \be\begin{split}
 \big( L_{p_0}(\M; H^c),\, L_{p_1}(\M; H^c) \big)_{\eta}&= L_{p}(\M; H^c)\;\text{ with equal norms},
\end{split}\ee
where $\frac{1}{p}=\frac{1-\eta}{p_0}+\frac{\eta}{p_1}$. The same formula holds for row spaces too.


\subsection{Operator-valued Hardy spaces.}


Throughout the remainder of the paper, unless explicitly stated otherwise,   $(\M,\tau)$ will be fixed as before  and $\N=L_\8(\real^d)\overline\ot\M$, equipped with the tensor trace. In this subsection, we introduce Mei's operator-valued Hardy spaces. Contrary to the custom, we will use letters $s, t$ to denote variables of $\real^d$ since letters $x, y$ are reserved to operators in noncommutative $L_p$-spaces. Accordingly, a generic element of the upper half-space $\mathbb{R}^{d+1}_+$ will be denoted by $(s,\e)$ with $\e>0$: $\mathbb{R}^{d+1}_+=\{(s,\e): s\in\real^d,\, \e>0\}$.

Let $\mathrm{P}$ be the Poisson kernel of $\real^d$:
 $$\mathrm{P}(s)=c_d\,\frac{1}{(|s|^2+1)^{\frac{d+1}2}}$$
with $c_d$ the usual normalizing constant and $|s|$ the Euclidean norm of $s$. Let
  $$\mathrm{P}_\e(s)=\frac1{\e^d}\, \mathrm{P}(\frac s\e)=c_d\,\frac{\e}{(|s|^2+\e^2)^{\frac{d+1}2}}\,.$$
For any function $f$ on $\mathbb{R}^d$ with values in $L_1(\M)+L_\8(\M)$,  its Poisson integral, whenever exists, will be denoted   by $\mathrm{P}_\e(f)$:
 $$\mathrm{P}_\e(f)(s)=\int_{\mathbb{R}^d}\mathrm{P}_\e(s-t)f(t)dt, \quad (s,\e)\in \mathbb{R}^{d+1}_+.$$
Note that the Poisson integral of $f$ exists if
 $$f\in L_1(\M; L^c_2(\real^d,\frac{dt}{1+|t|^{d+1}}))+L_{\infty}(\M;L^c_2(\real^d,\frac{dt}{1+|t|^{d+1}})).$$
This space is the  right space in which all functions  considered in this paper live as far as only column spaces are concerned. As it will appear frequently later, to simplify notation we will denote the Hilbert space $L_2(\real^d,\frac{dt}{1+|t|^{d+1}})$ by $\mathrm{R}_d$:
 \beq\label{Rd}
 \mathrm{R}_d=L_2(\real^d,\frac{dt}{1+|t|^{d+1}}).
 \eeq
The Lusin area square function of $f$ is defined by
 \beq\label{Lusin}
 S^c(f) (s) = \Big(\int_{\Gamma} \big|\frac{\partial}{\partial\e} \mathrm{P}_\e(f)(s+t)\big|^2\,\frac{dt\,d\e}{\e^{d-1}}\Big)^{\frac 1 2}, \quad s\in \mathbb{R}^d,
 \eeq
where $\Gamma$ is the cone $\{(t,\e)\in \real^{d+1}_+: |t|<\e\}$.
For $1\le p<\8$ define the column Hardy space $\mathcal{H}_p^c(\mathbb{R}^d, \M)$ to be
 $$\mathcal{H}_p^c(\mathbb{R}^d, \M)=\big\{f: \|f\|_{\mathcal{H}_p^c}
 =\big\|S^c(f)\|_{L_p(\N)}<\8\big\}.$$
Note that  \cite{Mei2007} uses the gradient of $\mathrm{P}_\e(f)$ instead of the sole radial derivative in the definition of $S^c$ above,  but this does not affect  $\mathcal{H}_p^c(\mathbb{R}^d, \M)$ (up to equivalent norms). On the other hand, it is proved in  \cite{Mei2007} that $\mathcal{H}_p^c(\mathbb{R}^d, \M)$ can be equally defined by the Littlewood-Paley $g$-function:
  \beq\label{LP}
 s^c(f) (s) = \Big(\int_{0}^\8 \e\, \big|\frac{\partial }{\partial\e}\mathrm{P}_\e(f)(s)\big|^2\,d\e \Big)^{\frac 1 2}, \quad s\in \mathbb{R}^d.
 \eeq
 Thus
  $$\|f\|_{\mathcal{H}_p^c}\approx \big\|s^c(f)\|_{L_p(\N)},\quad f\in \mathcal{H}_p^c(\mathbb{R}^d, \M).$$
The row Hardy space $\mathcal{H}_p^r(\mathbb{R}^d, \M)$ is the space of all $f$ such that $f^*\in\mathcal{H}_p^c(\mathbb{R}^d, \M)$, equipped with the norm
 $\|f\|_{\mathcal{H}_p^r}= \|f^*\|_{\mathcal{H}_p^c}\,.$
Finally, we define the mixture space $\mathcal{H}_p(\mathbb{R}^d, \M)$ as
 $$\mathcal{H}_p(\mathbb{R}^d, \M)=\mathcal{H}_p^c(\mathbb{R}^d, \M)+\mathcal{H}_p^r(\mathbb{R}^d, \M)
 \;\textrm{ for }\; 1\le p\le2$$
equipped with the sum norm
 $$\|f\|_{\H_p}=\inf\big\{\|f_1\|_{\H^c_p}+\|f_2\|_{\H^r_p}: f=f_1+f_2\big\},$$
and
 $$\mathcal{H}_p(\mathbb{R}^d, \M)=\mathcal{H}_p^c(\mathbb{R}^d, \M)\cap\mathcal{H}_p^r(\mathbb{R}^d, \M)
 \;\textrm{ for }\; 2< p<\8$$
equipped with the intersection norm
 $$\|f\|_{\H_p}=\max\big(\|f\|_{\H^c_p}\,, \|f\|_{\H^r_p}\big).$$
 Observe that
 $$\mathcal{H}_2^c(\mathbb{R}^d, \M)=\mathcal{H}_2^r(\mathbb{R}^d, \M)
 =L_2(\N)\;\textrm{ with equivalent norms.}$$
It is proved in  \cite{Mei2007} that for $1<p<\8$
 $$ \mathcal{H}_p(\mathbb{R}^d, \M)=L_p(\N)\;\textrm{ with equivalent norms.}$$

The operator-valued BMO spaces are also studied in \cite{Mei2007}. Let $Q$ be a cube in $\real^d$ (with sides parallel to the axes) and $|Q|$ its volume. For a function $f$ with values in $\M$, $f_Q$ denotes its mean over $Q$:
 $$f_Q = \frac{1}{|Q|} \int_Q f(t)dt.$$
The column BMO norm of $f$ is defined to be
 \begin{equation}
 \|f\|_{\mathrm{BMO}^c}=\sup_{Q\subset\real^d}\Big\|\frac{1}{|Q|}\int_Q\big|f(t)-f_Q\big|^2dt\Big\|_{\M}^{\frac 12}.
 \end{equation}
Then
 $$\mathrm{BMO}^c(\real^d,\M)=\big\{f: f \in
 L_{\infty}\big(\M; \mathrm{R}_d^c\big),\; \|f\|_{\mathrm{BMO}^c}<\infty\big\}.$$
Similarly, we define the row space $\mathrm{BMO}^r(\real^d,\M)$ as the space of $f$ such that $f^*\in \mathrm{BMO}^c(\real^d,\M)$,  and  $\mathrm{BMO}(\real^d,\M)=\mathrm{BMO}^c(\real^d,\M)\cap \mathrm{BMO}^r(\real^d,\M)$  with the intersection norm.

\begin{rk}
It is easy to see that in the above definition of $\BMO^c$, cubes $Q$ can be replaced by balls $B$ without changing $\BMO^c$ (up to equivalent norms). In the sequel, we will use cubes or balls according to problems in consideration.
\end{rk}

One of the main results of  \cite{Mei2007} asserts that the dual of  $\mathcal{H}_1^c(\mathbb{R}^d, \M)$ can be naturally identified with $\mathrm{BMO}^c(\real^d,\M)$. This is the operator-valued analogue of the celebrated Fefferman $H_1$-BMO duality theorem. The following interpolation result is also taken from \cite{Mei2007}.

\begin{lem}\label{Hardyinterpolation}
Let $1<p<\infty$. Then
 $$\big(\mathrm{BMO}^c(\real^d,\M),\; \mathcal{H}_1^c(\real^d,\M)\big)_{\frac{1}{p}}=\mathcal{H}^c_p(\real^d,\M)\;\text{ with equivalent norms}.$$
Similar statements hold for the row and mixture spaces too.
\end{lem}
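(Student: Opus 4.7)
The plan is to identify $(\BMO^c(\real^d,\M),\H_1^c(\real^d,\M))_{1/p}$ with $\H_p^c(\real^d,\M)$ by using as principal inputs the $\H_1^c$-$\BMO^c$ duality recalled just above (the operator-valued Fefferman duality from \cite{Mei2007}) and the identification $\H_p^c=L_p(\N)$ for $1<p<\8$. The natural route embeds both endpoints into a single vector-valued $L_p$-scale via the operator $T:f\mapsto \pa_\e \mathrm{P}_\e(f)$, viewed as a map from functions on $\real^d$ to functions on $\real^{d+1}_+$.

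By the $g$-function equivalence \eqref{LP}, $T$ embeds $\H_p^c(\real^d,\M)$ isomorphically onto a complemented subspace of $L_p(\N;L_2^c(\real_+,\e\,d\e))$ for every $1\le p<\8$; by Mei's Carleson-measure characterization of $\BMO^c$ (which is itself the key technical step in the $\H_1^c$-$\BMO^c$ duality), the same $T$ embeds $\BMO^c(\real^d,\M)$ into a noncommutative Carleson space $\mathcal{C}^c$ playing the role of an $L_\8$-endpoint. A common bounded left inverse is supplied by the Calder\'on reproducing formula for the Poisson semigroup. The desired interpolation identity then reduces, via the standard retraction-coretraction principle in complex interpolation, to the target identity $(\mathcal{C}^c,L_1(\N;L_2^c))_{1/p}=L_p(\N;L_2^c)$. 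After identifying $\mathcal{C}^c$ with an $L_\8$-type image under $T$, this target interpolation flows from the interpolation of noncommutative vector-valued $L_p$-spaces recalled in Section~1.1. The row and mixture versions follow by passing to adjoints and taking sums/intersections.

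The main technical obstacle is precisely the interpolation against the Carleson endpoint. In the scalar Coifman-Meyer-Stein theory the corresponding step rests on pointwise maximal control of tent norms, a tool unavailable noncommutatively. The substitute used in \cite{Mei2007} is an operator-valued atomic decomposition of $\H_1^c$ paired with a direct analytic-family construction; the naive candidate $F(z)=u|f|^{pz}$ (for $f=u|f|$ the polar decomposition of $f\in L_p(\N)$) only yields $F(1+it)\in L_1(\N)\not\hookrightarrow\H_1^c$, so one must compose with the Poisson-retraction machinery above to land in the Hardy endpoint. A shorter but less structural route bypasses tent spaces altogether: apply Wolff's reiteration theorem to the chain $\H_1^c\hookrightarrow L_p(\N)\hookrightarrow L_q(\N)\hookrightarrow\BMO^c$ for arbitrary $1<p<q<\8$ (inclusions understood on a common dense subspace), using $\H_r^c=L_r(\N)$ for $1<r<\8$ and the $\H_1^c$-$\BMO^c$ duality to obtain $(L_p,\BMO^c)_\psi=L_q$ from its dual $(\H_1^c,L_{q'})_\psi=L_{p'}$.
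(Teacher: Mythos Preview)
The paper does not prove this lemma; it is quoted verbatim from \cite{Mei2007} (see the sentence immediately preceding the statement). So there is no in-paper proof to compare against, and the relevant question is whether your outline is sound on its own.

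It is not, because your central input is false. You repeatedly invoke ``the identification $\H_p^c=L_p(\N)$ for $1<p<\8$''. In the operator-valued setting this holds only for the \emph{mixture} space $\H_p$, not for the column space $\H_p^c$; the paper states explicitly that $\H_p(\real^d,\M)=L_p(\N)$, and this is what the column/row decomposition is for. In general $\H_p^c(\real^d,\M)\neq L_p(\N)$ once $\M$ is genuinely noncommutative. Consequently your Wolff-reiteration route collapses: the intermediate identifications $(\H_1^c,L_{q'})_\psi=L_{p'}$ and $(L_p,\BMO^c)_\psi=L_q$ you want to bootstrap from are not available, because neither $\H_{p'}^c=L_{p'}$ nor $\H_q^c=L_q$ is true.

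Your first route also has gaps at both endpoints. The complementation of the image of $T$ in $L_p(\N;L_2^c)$ is only established for $1<p<\8$ (this is exactly Proposition~\ref{projection}, and its proof uses $\H_p^c\subset L_p$ for $p\ge2$ together with duality); the projection need not be bounded at $p=1$. At the other endpoint, the Carleson measure space $\mathcal{C}^c$ is \emph{not} $L_\8(\N;L_2^c)$, so the sentence ``after identifying $\mathcal{C}^c$ with an $L_\8$-type image under $T$, this target interpolation flows from the interpolation of noncommutative vector-valued $L_p$-spaces'' does not go through: the endpoint interpolation $(\mathcal{C}^c,L_1(\N;L_2^c))_{1/p}=L_p(\N;L_2^c)$ is precisely the hard part and is not a corollary of the column $L_p$ interpolation recalled in Section~1.1. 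You acknowledge this obstacle but then defer to \cite{Mei2007} for its resolution; at that point the proposal is really a sketch of where Mei's argument lives rather than an independent proof.
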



\subsection{ Main results.}


We now announce the main results of the paper. They assert that the Poisson kernel in the definition of Hardy spaces introduced in the previous subsection can be replaced by more general test functions. Most of the time, our test functions belong to the  Schwartz class $\mathcal{S}$ of $\real^d$. Given a function $\Phi$ on $\real^d$, we set $\Phi _\e(s) = \e^{-d} \Phi(\frac s  \e)$ for $\e>0$. Now let $\Phi\in\mathcal{S}$ be of vanishing mean.  We will assume that  $\Phi$ is nondegenerate in the following sense:
 \beq\label{schwartz}
  \forall\,\xi\in\real^d\setminus\{0\}\;\exists\, \e>0 \;\text { such that }\; \wh\Phi(\e\xi)\neq0.
  \eeq
Then  there exists  $\Psi \in \mathcal{S}$ of vanishing mean  such that
  \beq\label{reproduce}
  \int_0^\8 \wh\Phi(\e\xi)\, \overline{\wh\Psi (\e\xi)}\, \frac{d\e}{\e} = 1,\quad \forall\xi\in\real^d\setminus\{0\}.
   \eeq
This is a well-known elementary fact (cf. e.g., \cite[p. 186]{Stein1993}). Indeed, choose a nonnegative infinitely differentiable function $\eta$, compactly supported and vanishing near the origin, such that $|\widehat{\Phi}|^2\eta$ does not vanish identically on any ray emanating from the origin. Let
 $$h(\xi)=\int_0^\8|\wh \Phi(\e\xi)|^2\eta(\e\xi)\,\frac{d\e}\e \,.$$
Then the function $\Psi$ determined by
 $$\wh\Psi(\xi)=\frac{\wh\Phi(\xi)\eta(\xi)}{h(\xi)}$$
does the job.

Let $f$ be a function  in $L_1(\M; \mathrm{R}_d^c)+L_{\infty}(\M; \mathrm{R}_d^c)$ (recalling that the Hilbert space $\mathrm{R}_d$ is defined by \eqref{Rd}). Then the convolution $\Phi_\e *f $ is well-defined and takes values in $L_1(\M)+L_\8(\M)$. The radial and conic square functions of $f$ associated to $\Phi$ are defined as $s^c(f)$ in \eqref{LP} and $S^c(f)$ in \eqref{Lusin} with $\Phi$ in place of the radial partial derivative  of  the Poisson kernel $ \mathrm{P}$:
 \be\begin{split}
 s_{\Phi}^c(f) (s) &= \Big(\int_0^\8 | \Phi_\e *f (s)|^2\frac{d\e}{\e}\Big)^{\frac 1 2}\,,\\
 S_{\Phi}^c(f) (s) &=\Big (\int_{\Gamma} | \Phi_\e *f (s+t)|^2\frac{dtd\e}{\e^{d+1}}\Big)^{\frac 1 2} \,, \quad s\in\real^d.
 \end{split}\ee
Unless explicitly stated otherwise, $f$ will be always assumed to belong to $L_1(\M; \mathrm{R}_d^c)+L_{\infty}(\M; \mathrm{R}_d^c)$ in the sequel, whenever $\Phi_\e *f $ is considered; sometimes, we need to impose more regularity to $f$ in order to legitimate  the relevant calculations.

Throughout the paper, we will frequently use the notation $A\les B$, which is an inequality up to a constant: $A\le c\, B$ for some constant $c>0$. The relevant constants in all such inequalities may depend on the dimension $d$, the test function $\Phi$ or $p$, etc. but never on the functions $f$ in consideration. An equivalence $A\approx B$ will mean $A\les B$ and $B\les A$.

The following is one of the main results of this paper.

\begin{thm}\label{equivalence Hp}
Let $1\le p<\8$ and $f\in L_{1}(\M; \mathrm{R}_d^c)+L_{\infty}(\M; \mathrm{R}_d^c)$. Then $f\in\H_p^c(\real^d,\M)$ iff  $s_{\Phi}^c(f)\in L_{p}(\N)$ iff $S_{\Phi}^c(f)\in L_{p}(\N)$. If this is the case, then
 $$\|s_{\Phi}^c(f)\|_{L_{p}(\N)} \approx \|S_{\Phi}^c(f)\|_{L_{p}(\N)} \approx \|f\|_{\mathcal{H}^c_p}$$
with relevant constants depending only on $p, d$ and $\Phi$.
\end{thm}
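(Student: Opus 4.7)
My plan is to establish both equivalences in three stages: an $L_2$ base case via Plancherel, an upper bound by interpolation of Calder\'on-Zygmund endpoint estimates, and a matching lower bound through a reproducing formula coupled with $\H_p$-$\H_{p'}$ duality. For $p=2$, Plancherel and Fubini yield
\[
\|s_\Phi^c(f)\|_{L_2(\N)}^2 \;=\; \int_{\real^d}|\wh f(\xi)|^2 \Big(\int_0^\infty|\wh\Phi(\e\xi)|^2\,\frac{d\e}{\e}\Big)d\xi.
\]
The inner integral is homogeneous of degree $0$ in $\xi$ and, by the nondegeneracy assumption \eqref{schwartz} together with the Schwartz decay of $\wh\Phi$, is continuous and strictly positive on $\mathbb{S}^{d-1}$, hence bounded above and below by positive constants. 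Combined with the identification $\H_2^c(\real^d,\M)=L_2(\N)$, this settles $p=2$ for $s_\Phi^c$; the argument for $S_\Phi^c$ is identical modulo one additional Fubini step over the conic aperture.

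Let $\mathcal{K}$ denote the Hilbert space $L_2(\real_+,d\e/\e)$ in the radial case and $L_2(\Gamma,dt\,d\e/\e^{d+1})$ in the conic case. For the upper bound $\|s_\Phi^c(f)\|_{L_p(\N)}+\|S_\Phi^c(f)\|_{L_p(\N)}\les\|f\|_{\H_p^c}$, I view $T_\Phi\colon f\mapsto\bigl((s,\e)\mapsto\Phi_\e*f(s)\bigr)$ as an operator-valued Calder\'on-Zygmund operator with $\mathcal{K}$-valued kernel $K$ given by $K(s,u)(\e)=\Phi_\e(s-u)$; the Schwartz decay of $\Phi$ supplies the pointwise size estimate $\|K(s,u)\|_{\mathcal{K}}\les|s-u|^{-d}$ and the H\"ormander-type gradient condition required by CZ theory. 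It then suffices to prove two endpoint estimates: $T_\Phi\colon\H_1^c(\real^d,\M)\to L_1(\N;\mathcal{K}^c)$ via an atomic or CZ decomposition of $\H_1^c$, and $T_\Phi\colon\BMO^c(\real^d,\M)\to\BMO^c(\real^d,\M;\mathcal{K}^c)$ via a Carleson-type splitting, on each ball $B$, of $f$ into its mean, its local part $(f-f_B)\chi_{2B}$, and its tail. Lemma \ref{Hardyinterpolation} then delivers the inequality throughout $1\le p<\infty$.

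For the matching lower bound I invoke the Calder\'on reproducing identity $f=\int_0^\infty\wt\Psi_\e*\Phi_\e*f\,d\e/\e$ provided by \eqref{reproduce}. Pairing with a test element $\phi$ of the appropriate dual yields
\[
\la f,\phi\ra \;=\; \int_0^\infty\!\!\int_{\real^d}\Phi_\e*f(s)\,(\Psi_\e*\phi)(s)^*\,ds\,\frac{d\e}{\e},
\]
and an operator-valued Cauchy-Schwarz followed by the column-row $L_p$ duality gives $|\la f,\phi\ra|\les\|s_\Phi^c(f)\|_{L_p(\N)}\|s_\Psi^r(\phi)\|_{L_{p'}(\N)}$. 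Applying the already-established upper bound to $s_\Psi^r(\phi)$ bounds it by $\|\phi\|_{\H_{p'}^r}$ (respectively $\|\phi\|_{\BMO^r}$ when $p=1$); taking the supremum over $\phi$ and invoking the $\H_p^c$-$\H_{p'}^r$ duality (respectively Mei's $\H_1^c$-$\BMO^r$ theorem) produces $\|f\|_{\H_p^c}\les\|s_\Phi^c(f)\|_{L_p(\N)}$. The companion inequality with $S_\Phi^c$ follows from the same scheme paired against a conic integral over $\Gamma$.

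The principal obstacle I foresee is the $\BMO^c$ endpoint of the upper bound: unlike in Mei's Poisson-kernel setting, $\Phi_\e*f$ is not harmonic, so the mean-value arguments controlling its pointwise size by averages over neighbouring balls are unavailable. I will compensate by exploiting the full moment vanishing and rapid decay of $\Phi$ to handle the tail $\int_{(2B)^c}\Phi_\e(s-t)(f(t)-f_B)\,dt$, decomposing $(2B)^c$ dyadically into annuli and using the operator-valued John-Nirenberg inequality on each. The parallel question of comparing $s_\Phi^c$ with $S_\Phi^c$ directly, normally settled by subharmonicity, can be treated via an exact convolution identity of the form $\Phi_\e*f(s)=\int_{|t|<\e}(\wt\Phi_\e*f)(s+t)\,d\mu(t)$ for a suitably chosen auxiliary test function $\wt\Phi$, reducing matters to the stability of the square-function norm under change of admissible test kernel.
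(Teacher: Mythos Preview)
Your lower-bound argument for $1\le p<2$ has a genuine gap that the paper explicitly flags. The duality step
\[
|\la f,\phi\ra|\;\lesssim\;\|s_\Phi^c(f)\|_{L_p(\N)}\,\|s_\Psi^c(\phi)\|_{L_{p'}(\N)}
\]
is fine, but closing it requires $\|s_\Psi^c(\phi)\|_{L_{p'}(\N)}\lesssim\|\phi\|$ in the dual norm of $\H_p^c$. For $p=1$ this would demand $\|s_\Psi^c(\phi)\|_{L_\infty(\N)}\lesssim\|\phi\|_{\BMO^c}$, which is simply false: a $\BMO$ function controls Carleson-measure averages of $|\Psi_\e*\phi|^2$, not the pointwise supremum of the square function. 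For $1<p<2$ (so $p'>2$) you need the upper bound $\|s_\Psi^c(\phi)\|_{p'}\lesssim\|\phi\|_{\H_{p'}^c}$, but your interpolation scheme does not deliver it: interpolating the endpoints $\H_1^c\to L_1(\N;\mathcal K^c)$ and $\BMO^c\to\BMO^c(\real^d,\M;\mathcal K^c)$ lands you in $\H_{p'}^c(\real^d,\mathcal K^c\overline\ot\M)$, and in the operator-valued column setting one has $L_{p'}\subset\H_{p'}^c$ (not the reverse) when $p'>2$, so the $\H_{p'}^c$ target norm does not control $\|s_\Psi^c(\phi)\|_{L_{p'}}$. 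The paper notes precisely this obstruction after Lemma~\ref{SboundedH1toL1}.

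The paper's route around this is the heart of Section~\ref{Proof1}: Lemma~\ref{S-dual} proves directly that $|\tau\int fg^*|\lesssim\|S_\Phi^c(f)\|_p\,\|g\|_{\BMO_q^c}$ by inserting the truncated square functions $S(s,\e)^{p-2}$ and $S(s,\e)^{2-p}$ into the reproducing integral before Cauchy--Schwarz, then controlling the $g$-factor via the $q$-Carleson characterization of $\BMO_q^c$ (Lemma~\ref{CarlsonlessBMOq}). This bypasses any need for an $L_{p'}$ square-function bound on $g$. The radial version (Lemma~\ref{s-dual}) further requires the pointwise estimate $s_\Phi^c(f)^2\lesssim\sum_{|m|_1\le d}S_{D^m\Phi}^c(f)^2$ of Lemma~\ref{square1less2}, proved by Taylor expansion; your proposed ``exact convolution identity'' for comparing $s_\Phi^c$ with $S_\Phi^c$ is not the mechanism used and would need to be made precise. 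Finally, the upper bound $\|S_\Phi^c(f)\|_p\lesssim\|f\|_{\H_p^c}$ for $p>2$ is obtained not by interpolation but by dualizing against the tent space and invoking Lemma~\ref{T-dual} together with the complementation Lemma~\ref{tent-complemented}.
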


The above square functions $s_{\Phi}^c$ and $S_{\Phi}^c$ can be discretized as follows:
 \be\begin{split}
 s_{\Phi}^{c,D}(f)(s)  &= \Big(\sum_{j=-\8}^{\8} |\Phi_{2^j}*f (s)|^2\Big)^{\frac 1 2}\,,\\
 S_{\Phi}^{c,D}(f)(s)  &= \Big(\sum_{j=-\8}^{\8} 2^{-dj}\int_{B(s, 2^j)} |\Phi_{2^j}*f (t)|^2 dt\Big)^{\frac 1 2}\,.
  \end{split}\ee
Here $B(s, r)$ denotes the ball of $\real^d$ with center $s$ and radius $r$. To prove that these discrete square functions also describe our Hardy spaces, we need to impose the following  condition to the previous Schwartz function $\Phi$ of vanishing mean, which is stronger than \eqref{schwartz}:
 \beq\label{schwartz D}
  \forall\,\xi\in\real^d\setminus\{0\}\;\exists\, 0<2a\le b<\8\;\text { such that }\; \wh\Phi(\e\xi)\neq0,\;\forall\; \e\in (a,\,b].
  \eeq
 Then adapting the proof of \cite[Lemma~V.6]{ST1989} , we can find another Schwartz function $\Psi$ such that
 \beq\label{reproduceD}
   \sum_{j=-\8}^\8 \wh\Phi(2^j\xi)\, \overline{\wh\Psi (2^j\xi)} = 1,\quad \forall\xi\in\real^d\setminus\{0\}.
 \eeq
 The following discrete version of Theorem \ref{equivalence Hp} plays a crucial role in the study \cite{XXY} of  Triebel-Lizorkin spaces on quantum tori.

\begin{thm}\label{equivalence HpD}
Let $1\le p<\8$ and $f\in L_{1}(\M; \mathrm{R}_d^c)+L_{\infty}(\M; \mathrm{R}_d^c)$. Then $f\in\H_p^c(\real^d,\M)$ iff  $s_{\Phi}^{c, D}(f)\in L_{p}(\N)$ iff $S_{\Phi}^{c, D}(f)\in L_{p}(\N)$. Moreover,
 $$\|s_{\Phi}^{c, D}(f)\|_{L_{p}(\N)} \approx \|S_{\Phi}^{c, D}(f)\|_{L_{p}(\N)} \approx \|f\|_{\mathcal{H}^c_p}$$
with relevant constants depending only on $p, d$ and $\Phi$.
\end{thm}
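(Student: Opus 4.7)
The plan is to deduce Theorem~\ref{equivalence HpD} from Theorem~\ref{equivalence Hp} by establishing the $L_p(\N)$-equivalences
$$\|s_\Phi^{c,D}(f)\|_{L_p(\N)} \approx \|s_\Phi^c(f)\|_{L_p(\N)}\quad\text{and}\quad \|S_\Phi^{c,D}(f)\|_{L_p(\N)} \approx \|S_\Phi^c(f)\|_{L_p(\N)}.$$
Since condition \eqref{schwartz D} is strictly stronger than \eqref{schwartz}, Theorem~\ref{equivalence Hp} already gives $\|s_\Phi^c(f)\|_{L_p(\N)} \approx \|S_\Phi^c(f)\|_{L_p(\N)} \approx \|f\|_{\H^c_p}$, so these two equivalences complete the proof.

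The central tool is the discrete Calder\'on reproducing formula derived from \eqref{reproduceD}, namely $f = \sum_{k\in\ent} \wt\Psi_{2^k} * \Phi_{2^k} * f$ (where $\wt\Psi(s) = \overline{\Psi(-s)}$ to accommodate the complex conjugate in \eqref{reproduceD}), together with its continuous analogue $f = \int_0^\8 \wt\Psi_\e * \Phi_\e * f\,\frac{d\e}{\e}$ coming from \eqref{reproduce}. The engine driving the comparison is the almost-orthogonality bound
$$\|\Phi_\a * \Psi_\b\|_{L_1(\real^d)} \les \min\bigl(\a/\b,\, \b/\a\bigr),$$
valid for Schwartz $\Phi,\Psi$ of vanishing mean since $\wh\Phi(0)=\wh\Psi(0)=0$ and their Fourier transforms are Schwartz. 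To pass from the continuous to the discrete version, I substitute the continuous reproducing formula into $\Phi_{2^j} * f$ and apply an operator-valued Cauchy--Schwarz in $\e$ with scalar weights $\b_{j,\e} = \|\Phi_{2^j}*\wt\Psi_\e\|_1$, combined with the elementary inequality $|K*g|^2 \le \|K\|_1\,(|K|*|g|^2)$ valid for scalar $K$ and operator-valued $g$. Summing over $j$ and using the uniform bound $\sup_\e \sum_j \|\Phi_{2^j}*\wt\Psi_\e\|_1 < \8$ (which follows from the almost-orthogonality estimate after scaling) yields a pointwise-operator majoration
$$s_\Phi^{c,D}(f)(s)^2 \les \int_0^\8 (K_\e * |\Phi_\e*f|^2)(s)\,\frac{d\e}{\e}$$
with $\sup_\e \|K_\e\|_1 < \8$. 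The reverse inequality $s_\Phi^c(f) \les s_\Phi^{c,D}(f)$ is obtained symmetrically starting from the discrete reproducing formula. The conic versions $S_\Phi^{c,D}$ versus $S_\Phi^c$ are handled in the same way, after averaging over balls $B(s,\e)$ or $B(s,2^j)$, since the $L_1$ almost-orthogonality estimate is uniform with respect to small spatial translations.

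The main obstacle I anticipate is converting this pointwise/integral majoration into the desired $L_p(\N)$-norm inequality, since a naive Minkowski inequality in $L_{p/2}(\N)$ goes the wrong way. My plan is to recast the comparison as the $L_p(\N)$-boundedness of a vector-valued column operator with kernel $(\Phi_{2^j}*\wt\Psi_\e)_{j,\e}$, mapping $L_p(\N; L_2^c(\real_+, \frac{d\e}{\e})) \to L_p(\N; \ell_2^c(\ent))$. The $p=2$ endpoint is immediate from Plancherel combined with the pointwise bound $\sum_j|\wh\Phi(2^j\xi)|^2 \les 1$ (uniform in $\xi\in\real^d\setminus\{0\}$). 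To extend to all $1\le p < \8$, I would invoke the operator-valued Calder\'on--Zygmund machinery developed earlier in the paper to obtain the corresponding $\BMO^c$-endpoint bound, and then interpolate via Lemma~\ref{Hardyinterpolation}. A secondary but technically nontrivial point is the rigorous justification of the discrete reproducing formula for a general $f \in L_1(\M; \mathrm{R}_d^c) + L_\8(\M; \mathrm{R}_d^c)$, which I expect to handle by first establishing it on a dense class (say, Schwartz functions with values in $L_1(\M)\cap\M$) and then extending by continuity using the already-proven continuous square-function inequality.
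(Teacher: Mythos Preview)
Your plan is to reduce the discrete theorem to the already-proven continuous Theorem~\ref{equivalence Hp} by establishing $\|s_\Phi^{c,D}(f)\|_{L_p(\N)}\approx\|s_\Phi^c(f)\|_{L_p(\N)}$ (and similarly for the conic version) via a vector-valued Calder\'on--Zygmund argument. This is \emph{not} what the paper does: the paper instead reruns the entire machinery of Section~\ref{Proof1} in the discrete setting, proving discrete analogues of the duality Lemmas~\ref{S-dual}, \ref{T-dual} and \ref{s-dual} (these are Lemmas~\ref{S-dualD} and \ref{s-dualD}) and noting that the discrete versions of Lemmas~\ref{SboundedH1toL1} and \ref{tent-complemented} go through verbatim.

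Your approach runs into a genuine obstacle for $1\le p<2$, and most seriously at $p=1$. The operator you want to bound sends $L_p(\N;\ell_2^c)$ to $L_p(\N;L_2^c)$; its convolution kernel is valued in $B(\ell_2,L_2)\otimes 1_\M$. Interpolating the $L_2$ and $\BMO^c$ endpoints via Lemma~\ref{Hardyinterpolation}, as you propose, gives boundedness on $\H_p^c(\real^d;\ell_2^c\bar\otimes\M)$, not on $L_p(\N;\ell_2^c)$. For $p\ge 2$ one has $L_p\subset\H_p^c$, so the passage is harmless (this is exactly the mechanism used in the proof of Proposition~\ref{projection}). For $p<2$, however, the inclusion goes the wrong way: $\H_p^c\subset L_p$, so an $\H_p^c$-bound on the input does not follow from an $L_p$-bound. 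The row-space trick of Corollary~\ref{CZH}\,(ii), which in the scalar-Hilbert case yields $L_p$-boundedness through $\H_p=\H_p^c\cap\H_p^r=L_p$, is unavailable here because your kernel takes values in $B(\ell_2,L_2)$ and therefore does not commute with the $\ell_2^c\bar\otimes\M$-valued input; thus $K^c$ and $K^r$ no longer coincide. Duality from the $p>2$ case does not help either: the adjoint of your comparison operator is a \emph{different} operator, not the one needed for the reverse inequality.

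This is precisely why the paper does not attempt a direct comparison of the two square functions but instead reproves the key duality inequality (Lemma~\ref{S-dualD}) in the discrete setting: that lemma bounds $\big|\tau\int fg^*\big|$ directly by $\|S_\Phi^{c,D}(f)\|_p\|g\|_{\BMO_q^c}$, which combined with Mei's $\H_p^c$--$\BMO_q^c$ duality gives $\|f\|_{\H_p^c}\lesssim\|S_\Phi^{c,D}(f)\|_p$ for all $1\le p<2$, including $p=1$. Your reproducing-formula and almost-orthogonality estimates are correct and useful, but by themselves they do not supply the $p=1$ endpoint; you would still need an argument equivalent in strength to Lemma~\ref{S-dualD}.
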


The requirement that $\Phi\in\mathcal{S}$ can be considerably relaxed in the preceding two theorems. Here, we consider only one example: $\Phi=I^\a(\mathrm{P})$ with $\a>0$, where $I^\a$ is  the Riesz potential of order $\a$.  Recall that $I^\a=(-(2\pi)^{-2}\D)^{\frac \a 2}$ is a Fourier multiplier on $\real^d$ with symbol $I_\a$ defined by $I_\a(\xi)=|\xi|^\a$. Thus $\wh{I_\a(\mathrm{P}})(\xi)=|\xi|^\a \wh{\mathrm{P}}(\xi)=|\xi|^\a e^{-2\pi|\xi|}$. Then the preceding two theorems continue to hold for this choice of $\Phi$. We only state the following radial version which is used in the  proof of the  Poisson semigroup characterization of noncommutative Triebel-Lizorkin spaces in \cite{XXY}.

\begin{thm}\label{charact-Riesz of Poisson}
Let $\Phi=I^\a(\mathrm{P})$ with $\a>0$. Then for any $1\leq p<\8$, we have
\beq\label{Riesz of Poisson}
\|f\|_{\mathcal{H}^{c}_p}\approx \Big\|\Big(\int_0^\8 \big|\Phi_{\e}*f\big|^2\frac{d\e}{\e}\Big)^{\frac12}\Big\|_{L_p(\N)}
\eeq
with relevant constants depending only on $p, d$ and $\a$. Consequently,  for any integer $k\ge1$
\beq\label{D of Poisson}
\|f\|_{\mathcal{H}^{c}_p}\approx\Big\|\Big(\int_0^\8\e^{2k} \big|\frac{\partial^k}{\partial \e^k} \mathrm{P}_\e(f)\Big|^2\frac{d\e}{\e}\Big)^{\frac{1}{2}}\Big\|_{L_p(\N)}\,.
\eeq
\end{thm}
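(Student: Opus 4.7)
The plan is to derive both equivalences from Theorem \ref{equivalence Hp}, after observing that, although $\Phi=I^\a(\mathrm{P})$ does not lie in $\mathcal{S}$ when $\a$ is not an even integer, it retains all of the smoothness and decay that is actually used in the proof of that theorem. First I would verify the structural hypotheses on $\Phi$. Its Fourier transform $\wh\Phi(\xi)=|\xi|^\a e^{-2\pi|\xi|}$ vanishes at the origin, so $\Phi$ has vanishing mean, and $\wh\Phi(\e\xi)\neq0$ for every $\e>0$ and $\xi\neq0$, which is the nondegeneracy condition \eqref{schwartz}. The exponential decay of $\wh\Phi$ at infinity forces $\Phi\in C^\8(\real^d)$, and a short Fourier inversion argument---splitting $|\xi|\le1$ and $|\xi|\ge1$ and exploiting the $|\xi|^\a$ singularity at the origin to read off the decay of $\Phi$ at infinity---yields, for every multi-index $\b$, the pointwise bound $|\partial^\b \Phi(s)|\les(1+|s|)^{-d-\a-|\b|}$.

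Next I would revisit the proof of Theorem \ref{equivalence Hp} to isolate the exact conditions it imposes on the test function. That proof rests on the operator-valued Calder\'on--Zygmund theory together with the reproducing formula \eqref{reproduce}, and uses $\Phi$ and the companion $\Psi$ only through a vanishing mean and finitely many decay bounds of the form $|\partial^\b\Phi(s)|\les(1+|s|)^{-d-\d-|\b|}$ for some fixed $\d>0$ and with $|\b|$ bounded by a constant depending on $d$. My $\Phi$ meets these requirements with $\d=\a>0$; moreover, choosing the bump $\eta$ in the construction of $\Psi$ to be radial and compactly supported in an annulus of $\real^d\setminus\{0\}$ makes $\wh\Psi=\wh\Phi\,\eta/h$ smooth and compactly supported, so $\Psi$ is in fact Schwartz. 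The proof of Theorem \ref{equivalence Hp} then transfers verbatim and delivers \eqref{Riesz of Poisson}.

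To deduce \eqref{D of Poisson} from \eqref{Riesz of Poisson} I specialize the latter to $\a=k$ and use the Fourier-side identity
\beqn
\wh{\partial^k_\e \mathrm{P}_\e(f)}(\xi)=(-2\pi|\xi|)^k\, e^{-2\pi\e|\xi|}\,\wh f(\xi)=(-2\pi)^k\,\e^{-k}\,\wh{\Phi_\e*f}(\xi),
\eeqn
which yields the pointwise equality $\e^k\partial^k_\e \mathrm{P}_\e(f)=(-2\pi)^k\Phi_\e*f$. Substituting this into the square function on the right-hand side of \eqref{D of Poisson} converts it, up to the multiplicative constant $(2\pi)^{2k}$, into the square function on the right-hand side of \eqref{Riesz of Poisson}, and the claim follows.

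The main obstacle is step~2: one must carefully audit the proof of Theorem \ref{equivalence Hp} and confirm that every instance where the Schwartz hypothesis on $\Phi$ was invoked can be replaced by the explicit polynomial decay bound above for finitely many derivatives. I expect this to be essentially bookkeeping, with the most delicate points being the Calder\'on--Zygmund kernel estimates and the absolute convergence of the reproducing integrals, but no genuinely new idea is required.
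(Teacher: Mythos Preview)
Your proposal is correct and follows the same overall strategy as the paper: audit the proof of Theorem~\ref{equivalence Hp}, isolate the conditions actually used on $\Phi$ (namely that $s^c_{D^m\Phi}$ and $S^c_{D^m\Phi}$ are Calder\'on--Zygmund operators for $|m|_1\le d$, that a Schwartz companion $\Psi$ exists for the reproducing formula, and that the Carleson estimates hold for $\Psi$), and then verify them for $\Phi=I^\a(\mathrm P)$. The derivation of \eqref{D of Poisson} from \eqref{Riesz of Poisson} is also identical to the paper's.

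The one point of genuine difference is how the decay of $\Phi$ is obtained. The paper proves the weaker bound $J_{d+\s}I^\a(\mathrm P)\in L_\infty$ for some $\s\in(0,\a)$ via a somewhat indirect route: for $\a>1$ it shows $\wh{I^\a(\mathrm P)}*\wh{I^\a(\mathrm P)}\in W_1^{2(d+1)}$ and invokes Hausdorff--Young, and then uses the three-lines lemma to interpolate down to $0<\a\le1$. Your direct Fourier argument, exploiting the homogeneous singularity $|\xi|^\a$ at the origin (via a dyadic frequency decomposition or the asymptotics of Riesz-type kernels), yields the sharp rate $(1+|s|)^{-d-\a-|\b|}$ for $\partial^\b\Phi$ in one step. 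Your route is more elementary and more informative; the paper's has the mild advantage of avoiding any appeal to the Fourier analysis of homogeneous distributions. Either bound suffices, since only $(1+|s|)^{-d-\s}$ with some $\s>0$ is needed for the Calder\'on--Zygmund kernel estimates.
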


\begin{rk}
Note that letting $k=1$ in \eqref{D of Poisson}, we return back to the original definition of Hardy spaces in Mei \cite{Mei2007}.  We also would  like to point out that the above theorem seems new even in the scaler case; compare it with the characterization of Triebel-Lizorkin spaces $F_{p,q}^\a(\real^d)$ ($F_{p,2}^0(\real^d)=\mathcal{H}_p(\real^d)$) in \cite[Theorem~2.6.4]{HT1992}.
\end{rk}

\begin{rk}
As mentioned before, the preceding three theorems play an important role in the proof of the general characterization of noncommutative Triebel-Lizorkin spaces in \cite{XXY}.  Conversely, the latter can be used to  characterize $\H_p^c(\real^d,\M)$ by test functions much more general than  $\Phi$ in the preceding theorems. This will be pursued elsewhere.
\end{rk}

\medskip

The paper is organized as follows. The next section contains some elementary results on Calder\'on-Zygmund operators in the noncommutative setting.  In section~\ref{Carleson measures}, we establish the link between Carleson measures and BMO spaces. Sections~\ref{Proof1}, \ref{Proof2} and \ref{A special Phi} are devoted, respectively, to the proofs of Theorems \ref{equivalence Hp}, \ref{equivalence HpD} and \ref{charact-Riesz of Poisson}. Sections~\ref{Applications to tori} and \ref{Applications to quantum tori} present applications to the usual and quantum tori.


\section{Calder\'on-Zygmund operators and square functions}


Let $K$ be an $L_1(\M)+L_\8(\M)$-valued distribution on $\real^d$. We will assume that $K$ coincides on $\real^d\setminus\{0\}$ with a locally integrable $L_1(\M)+L_\8(\M)$-valued function. Then the convolution $K*f$ is defined for sufficiently nice function $f$ with values in $L_1(\M)\cap L_\8(\M)$. This is the (left) singular integral operator $K^c$ associated to $K$:
 $$K^c(f)(s)=K*f(s)=\int_{\real^d}K(s-t)f(t)dt,\quad s\in\real^d.$$
For instance, $K^c(f)$ is well defined for any $f\in \mathcal S\ot(L_1(\M)\cap L_\8(\M))$ (recalling that $\mathcal S$ is the Schwartz class on $\real^d$). Note that $K^c$ is right $\M$-modular.

Similarly, we  define the right singular integral operator  $K^r$:
 $$K^r(f)(s)=f*K(s)=\int_{\real^d}f(t)K(s-t)dt.$$

We will frequently use the following Cauchy-Schwarz type inequality for the operator square function. Let $(\Om,\mu)$ be a measure space. Then
 \beq\label{CS}
 \Big|\int_{\Om}\phi fd\mu\Big|^2\le \int_{\Om}|\phi|^2d\mu \int_{\Om} |f|^2d\mu,
 \eeq
where $\phi:\Om\to\com$ and $f:\Om\to L_1(\M)+L_\8(\M)$ are  functions such that all members of the above inequality make sense.  We will also require the operator-valued version of the Plancherel formula. For sufficiently nice functions $f, g:\real^d\to L_1(\M)+L_\8(\M)$, for instance, for $f, g\in L_2(\real^d)\ot L_2(\M)$,  we have
  \beq\label{Plancherel}
  \int_{\real^d}g^*(s)f(s)ds=\int_{\real^d}(\wh g(\xi))^*\wh f(\xi) d\xi\;\text{ as measurable operators}.
  \eeq

The following result must be known to experts. It is closely related to similar results of  \cite{HLMP2014,JMP2014, MP2009, Parcet2009}. We include a proof by standard arguments for completeness. Let  $\BMO^c_0(\real^d, \M)$ denote the subspace of  $\BMO^c(\real^d, \M)$ consisting of compactly supported functions.

\begin{lem}\label{CZ}
 Assume that
  \begin{enumerate}[\rm a)]
 \item the Fourier transform of $K$ is bounded: $\displaystyle\sup_{\xi\in\real^d}\|\wh K(\xi)\|_\M<\8$;
 \item $K$ has the Lipschitz regularity: there exists a positive constant $C$ such that
 $$\|K(s-t)-K(s)\|_\M\le \frac{C\,|t|}{|s-t|^{d+1}}\,,\quad\forall |s|>2|t|.$$
 \end{enumerate}
Then $K^c$ is bounded on $\H_p^c(\real^d,\M)$ for $1\le p<\8$ and from $\BMO^c_0(\real^d, \M)$ to $\BMO^c(\real^d, \M)$.

A similar statement also holds for $K^r$ and the corresponding row spaces.
 \end{lem}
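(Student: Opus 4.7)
The plan is to establish $K^c$'s boundedness at the two endpoints $\H_1^c$ and $\BMO^c$ and then interpolate via Lemma~\ref{Hardyinterpolation}, with an $L_2 = \H_2^c$ estimate serving as the common ``local'' input throughout. For the $L_2$ bound, since $K^c$ acts by right-$\M$-modular convolution, the operator-valued Plancherel identity \eqref{Plancherel} gives
$$\|K^c f\|_{L_2(\N)}^2 = \int_{\real^d} \tau\bigl(|\wh K(\xi)\wh f(\xi)|^2\bigr)\, d\xi \le \|\wh K\|_\infty^2 \|f\|_{L_2(\N)}^2,$$
which is exactly hypothesis (a); this immediately handles the case $p = 2$.

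For the $\BMO_0^c \to \BMO^c$ estimate, I would fix a ball $B = B(s_0, r)$ and $f \in \BMO_0^c$, split $f - f_{4B} = g_1 + g_2$ with $g_1 = (f - f_{4B})\un_{4B}$ and $g_2$ the tail, and subtract the constant $c_B = K^c g_2(s_0)$, well-defined because hypothesis (b) makes $K^c g_2$ Lipschitz on $B$. On $B$, the $g_1$-piece is controlled by the $L_2$ bound together with the definition of $\BMO^c$. The $g_2$-piece is handled by the identity
$$K^c g_2(s) - K^c g_2(s_0) = \int_{(4B)^c} [K(s-t) - K(s_0 - t)](f(t) - f_{4B})\, dt,$$
the operator Cauchy--Schwarz inequality \eqref{CS}, the kernel estimate $\|K(s-t) - K(s_0 - t)\|_\M \lesssim r/|s_0 - t|^{d+1}$ from (b), and a dyadic decomposition of $(4B)^c$ into annuli on which the $\BMO^c$ oscillation of $f$ grows only logarithmically. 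For $\H_1^c \to \H_1^c$, I would use Mei's atomic decomposition and show $\|K^c a\|_{\H_1^c} \lesssim 1$ for every $\H_1^c$-atom $a$ supported in a cube $Q$: the near part $(K^c a)\un_{2Q}$ is, up to a universal constant, again an atom by the $L_2$ estimate, while the far part uses the cancellation $\int a = 0$ to write $K^c a(s) = \int_Q [K(s-t) - K(s - s_Q)] a(t)\, dt$ on $(2Q)^c$, then decomposes the far region into dyadic annuli $R_k = 2^{k+1}Q \setminus 2^k Q$ where (b) produces a geometric factor $2^{-k}$ and realizes $\un_{R_k} K^c a$, after a mean-zero correction, as a rescaled $\H_1^c$-atom of $R_k$. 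Summing the geometric series completes the $\H_1^c$ estimate. Lemma~\ref{Hardyinterpolation} then covers every $1 < p < \infty$, and the row case follows by passing to adjoints, using that the kernel $s \mapsto K(s)^*$ satisfies the same hypotheses (a) and (b).

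The main obstacle is the far-part step of the $\H_1^c$ estimate. The classical scalar argument relies on a pointwise control of the Lusin square function of $(K^c a)\un_{(2Q)^c}$, which is unavailable in the noncommutative setting. Replacing it with a dyadic-atomic argument requires carefully tracking three items on each annulus $R_k$: the $L_2(R_k, \M)$-mass (via (b) and Cauchy--Schwarz), the correct atomic normalization $|R_k|^{-1/2}$, and a mean-zero condition that must be enforced by subtracting a small constant, since the decomposition does not preserve cancellation automatically. Once these are in place the geometric decay $2^{-k}$ sums to a finite constant, and the remaining pieces of the argument (the BMO endpoint, the interpolation, and the row reduction) are essentially routine.
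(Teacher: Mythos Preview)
Your $\BMO_0^c\to\BMO^c$ argument is essentially the paper's: the paper also splits $f$ into a local piece handled by the $L_2$/Plancherel bound and a tail piece handled by the Lipschitz estimate plus a dyadic annulus decomposition, differing only cosmetically (it uses $2Q$ rather than $4B$ and first treats the normalised case $K^c(\un_{\real^d})=0$). The interpolation via Lemma~\ref{Hardyinterpolation} and the row reduction are also as you say.

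The genuine difference is at the $\H_1^c$ endpoint. You propose a direct atomic argument and correctly flag the far-part step as the obstacle; with enough care (mean-zero corrections on each annulus $R_k$, tracking the $L_1\bigl(\M;L_2^c\bigr)$-normalisation, summing the geometric factor $2^{-k}$) it can be pushed through, but it is laborious. The paper sidesteps all of this by \emph{duality}: once the $\BMO_0^c\to\BMO^c$ bound is established, one notes that the adjoint $(K^c)'$ is again a convolution operator, now with kernel $\wt K(s)=K(-s)^*$, and $\wt K$ satisfies the same hypotheses (a) and (b). Hence $(K^c)'$ is bounded on $\BMO_0^c$, and by the $\H_1^c$--$\BMO^c$ duality this is exactly the boundedness of $K^c$ on $\H_1^c$. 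No atoms are ever touched; the hardest endpoint becomes a two-line corollary of the one already proved. This duality shortcut is worth keeping in mind, since it is the natural substitute for the maximal-function arguments that are unavailable here.
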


 \begin{proof}
 First suppose that $K^c$ maps constant functions to zero. This amounts to requiring that $K^c(\un_{\real^d})=0$. Let $f\in\BMO^c_0(\real^d,\M)$ and $Q$ be a cube with center  $c$. Let $\wt Q=2Q$ be the cube with center $c$ and twice the side length of $Q$.  Decompose $f$ as $f=f_1+f_2+f_{\wt Q}$ with $f_1=(f-f_{\wt Q})\un_{\wt Q}$. Then $K^c(f)=K^c(f_1)+K^c(f_2)$.  Letting
 $$\a=\int_{\real^d\setminus\wt Q}K(c-t)\big(f(t)-f_{\wt Q}\big)dt,$$
we have
 $$K^c(f)(s)-\a=K^c(f_1)(s)+\int_{\real^d}(K(s-t)-K(c-t))f_2(t)dt.$$
 Thus by \eqref{CS},
 $$\frac1{|Q|}\int_Q|K^c(f)(s)-\a|^2ds
 \le 2(A+B),$$
 where
 \be\begin{split}
 A &=\frac1{|Q|}\int_Q|K^c(f_1)(s)|^2ds, \\
 B&=\frac1{|Q|}\int_Q \Big|\int_{\real^d}(K(s-t)-K(c-t))f_2(t)dt\Big|^2ds.
  \end{split}\ee
The first term $A$ is easy to estimate. Indeed, by \eqref{Plancherel},
 \be\begin{split}
  |Q|A
  &\le\int_{\real^d}|K^c(f_1)(s)|^2ds
 =\int_{\real^d}|\wh K(\xi)\wh f_1(\xi)|^2d\xi\\
 &=\int_{\real^d}\wh f_1(\xi)^*\wh K(\xi)^*\wh K(\xi)\wh f_1(\xi) d\xi
 \le \int_{\real^d}\|\wh K(\xi)\|_\M^2|\wh f_1(\xi)|^2 d\xi\\
 &\les \int_{\real^d}|f_1(s)|^2ds
 =\int_{\wt Q}|f(s)-f_{\wt Q}|^2ds \les|\wt Q|\, \|f\|^2_{\BMO^c}\,,
 \end{split}\ee
so
 $\|A\|_{\M}\les \|f\|^2_{\BMO^c}$.

 To estimate $B$, using \eqref{CS} and the Lipschitz regularity ($l$ denoting the side length of $Q$), for any $s\in Q$,  we have
  \be\begin{split}
  &\Big|\int_{\real^d}(K(s-t)-K(c-t))f_2(t)dt\Big|^2\\
 &\le \int_{\real^d\setminus\wt Q}\|K(s-t)-K(c-t))\|_\M dt\\
 &~~~\cdot
 \int_{\real^d\setminus\wt Q}\|K(s-t)-K(c-t))\|^{-1}_\M\,\big|(K(s-t)-K(c-t))f_2(t)\big|^2dt\\
 &\les \int_{\real^d\setminus\wt Q}\|K(s-t)-K(c-t))\|_\M\,|f_2(t)|^2dt\\
 &\les l \int_{\real^d\setminus\wt Q}\frac1{|t-c|^{d+1}}\,|f_2(t)|^2d\\
 &\les \sum_{k\ge0} 2^{-k}\,\frac1{|2^{k+1}\wt Q|}\int_{2^{k+1}\wt Q\setminus2^k\wt Q}|f(t)-f_{\wt Q}|^2dt\\
 &\les \sum_{k\ge0} 2^{-k}\,\frac1{|2^{k+1}\wt Q|}\int_{2^{k+1}\wt Q}|f(t)-f_{2^{k+1}\wt Q}|^2dt+
 \sum_{k\ge0} 2^{-k}|f_{2^{k+1}\wt Q}-f_{\wt Q}|^2\\
 &\les \sum_{k\ge0} 2^{-k}\, \|f\|^2_{\BMO^c}+
 \sum_{k\ge0} 2^{-k}|f_{2^{k+1}\wt Q}-f_{\wt Q}|^2
  \end{split}\ee
 However, by \eqref{CS} once more,
  \be\begin{split}
  \big|f_{2^{k+1}\wt Q}-f_{\wt Q}\big|^2
 &\le (k+1)\sum_{j=0}^k\big|f_{2^{j+1}\wt Q} -f_{2^j\wt Q}\big|^2\\
 &\le \frac{k+1}{2^d}\sum_{j=0}^k\frac1{|2^{j+1}\wt Q|} \int_{2^{j+1}\wt Q}\big|f(t)-f_{2^{j+1}\wt Q}\big|^2dt\\
 &\le \frac{(k+1)^2}{2^d}\,  \|f\|^2_{\BMO^c}.
 \end{split}\ee
Combining the previous inequalities, we then deduce
 $\|B\|_{\M}\les \|f\|^2_{\BMO^c}$.
Therefore, $K^c$ is bounded on $\BMO^c(\real^d,\M)$.

Now it is easy to get rid of the additional requirement that $K^c(\un_{\real^d})=0$. Indeed, combining the preceding argument and the proof of \cite[Proposition~II.5.15]{gar-rubio},  we can show that $K^c(\un_{\real^d})$ can be naturally defined as a function in $\BMO^c(\real^d,\M)$. Then for $f$ and $Q$ as  above, we have $K^c(f)=K^c(f_1)+K^c(f_2)+K^c(\un_{\real^d})f_{\wt Q}$, so
 \be\begin{split}
 \|K^c(f)\|_{\BMO^c}
 &\le \|K^c(f_1)\|_{\BMO^c}+\|K^c(f_2)\|_{\BMO^c}+\|K^c(\un_{\real^d})\|_{\BMO^c}\,\|f_{\wt Q}\|_{\M}\\
 &\les  \|f\|_{\BMO^c}+\|f_{\wt Q}\|_{\M}\les \|f\|_{\BMO^c}\,.
   \end{split}\ee
 Thus we have proved the $\BMO^c$-boundedness of $K^c$ in the general case.

By duality, the boundedness of $K^c$ on $\H_1^c(\real^d,\M)$ is equivalent to that of its adjoint map $(K^c)'$ on $\BMO^c_0(\real^d,\M)$. However, it is easy to see that $(K^c)'$  is also a singular integral operator:
 $$(K^c)'(g)=\int_{\real^d}\wt K(s-t)g(t)dt,$$
where $\wt K(s)=K(-s)^*$. Clearly, $\wt K$ satisfies the same assumption as $K$, thus $(K^c)'$ is bounded on $\BMO^c_0(\real^d,\M)$, so is $K^c$ on $\H_1^c(\real^d,\M)$.

It remains to interpolate the previous two cases by means of Lemma~\ref{Hardyinterpolation}. We need, however, to note that Lemma~\ref{Hardyinterpolation} still holds with $\BMO^c_0(\real^d,\M)$ in place of $\BMO^c(\real^d,\M)$. Thus $K^c$ is bounded on  $\H_p^c(\real^d,\M)$ for any $1<p<\8$, so the assertion is proved.
  \end{proof}

A special case of Lemma \ref{CZ} concerns Hilbert-valued kernels. Let $H$ be a Hilbert space, and let $\mathsf k:\real^d\to H$ be a $H$-valued kernel. We view the vectors of $H$ as column matrices in $B(H)$ in a fixed orthonormal basis. Put $K(s)=\mathsf k(s)\ot 1_{\M}\in B(H)\overline\ot\M$. We consider the restriction of the associated singular integral operator $K^c$ to $L_2(\N)$, still denoted by the same symbol:
 $$K^c(f)(s)=K*f(s)=\int_{\real^d}K(s-t)f(t)dt$$
for nice functions $f:\real^d\to L_1(\M)+L_\8(\M)$. So $K^c$ maps functions with values in $L_1(\M)+L_\8(\M)$ to those with values in the column subspace of  $L_1(B(H)\overline\ot\M)+L_\8(B(H)\overline\ot\M)$. Consequently,
 $$\|K^c(f)\|_{L_p(B(H)\overline\ot \N)}=\|K^c(f)\|_{L_p(\N; H^c)}\,.$$
Since $\mathsf k(s)\ot 1_{\M}$ commutes with $\M$, $K^c(f)=K^r(f)$ for $f\in L_2(\N)$. Let us denote  this common operator by $\mathsf k^c$. Here the superscript $c$ refers to the previous convention that the vectors of $H$ are identified with  column matrices in $B(H)$.  Thus Lemma \ref{CZ} implies the following

\begin{cor}\label{CZH}
 Assume that
  \begin{enumerate}[\rm a)]
 \item $\displaystyle\sup_{\xi\in\real^d}\|\wh{\mathsf k}(\xi)\|_H<\8$;
 \item $\displaystyle \|\mathsf k(s-t)-\mathsf k(s)\|_H\les \frac{|t|}{|s-t|^{d+1}}\,,\quad\forall |s|>2|t|>0.$
 \end{enumerate}
 Then the operator $\mathsf k^c$ is bounded
 \begin{enumerate}[\rm i)]
 \item from $\BMO^c_0(\real^d, \M)$ to $\BMO^c(\real^d, B(H)\overline\ot\M)$, and from $\H_p^c(\real^d,\M)$ to $\H_p^c(\real^d,B(H)\overline\ot\M)$ for $1\le p<\8$;
 \item from $\BMO^r_0(\real^d, \M)$ to $\BMO^r(\real^d, B(H)\overline\ot\M)$, and from $\H_p^r(\real^d,\M)$ to $\H_p^r(\real^d,B(H)\overline\ot\M)$ for $2\le p<\8$;
 \item from $L_p(\N)$  to $L_p(\N;H^c)$ for $2\le p<\8$.
 \end{enumerate}
 \end{cor}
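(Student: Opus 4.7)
The plan is to apply Lemma~\ref{CZ} to the operator-valued kernel $K(s):=\mathsf{k}(s)\ot 1_\M$ in the enlarged algebra $\wt\M:=B(H)\overline\ot\M$ (equipped with the tensor trace). Identifying $\mathsf{k}(s)\in H$ with the column matrix $\mathsf{k}(s)v^*\in B(H)$ gives $\|K(s)\|_{\wt\M}=\|\mathsf{k}(s)\|_H$ and $\|\wh{K}(\xi)\|_{\wt\M}=\|\wh{\mathsf{k}}(\xi)\|_H$, so hypotheses (a) and (b) of Lemma~\ref{CZ} for $K$ over $\wt\M$ coincide with those of the corollary; the same holds for the adjoint kernel $\widetilde{K}(s):=K(s)^*=v\mathsf{k}(s)^*\ot 1_\M$.

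For (i), I would use the embedding $\iota:f\mapsto p_v\ot f$ from $\M$-valued to $\wt\M$-valued functions. Since scalar Poisson convolution commutes with the left factor, $S^c(\iota f)=p_v\ot S^c(f)$, so $\iota$ preserves column Hardy and column BMO norms. A short matrix identity using $v^*p_v=v^*$ yields $K^c(\iota f)=\mathsf{k}^c(f)$. Composing with Lemma~\ref{CZ} applied to $K$ gives (i).

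For (ii), I would interpolate between the $L_2$ and BMO endpoints. The $L_2$-endpoint follows from (i) via $\H_2^r=L_2=\H_2^c$. At the BMO endpoint, I use the embedding $h\mapsto 1_{B(H)}\ot h$, which is not $L_p$-valid but is isometric for column BMO norms since $\|1_{B(H)}\ot A\|_{\wt\M}=\|A\|_\M$ and compact support is preserved. A direct calculation gives
$$\widetilde{K}^c(1_{B(H)}\ot f^*)(s)=\int v\mathsf{k}(s-t)^*\ot f^*(t)\,dt=(\mathsf{k}^c(f))^*(s),$$
so Lemma~\ref{CZ}'s column BMO bound applied to $\widetilde{K}$ furnishes $\|(\mathsf{k}^c(f))^*\|_{\BMO^c(\real^d,\wt\M)}\les\|f^*\|_{\BMO^c(\real^d,\M)}$, which is precisely $\mathsf{k}^c:\BMO_0^r(\real^d,\M)\to\BMO^r(\real^d,\wt\M)$. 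The row analogue of Lemma~\ref{Hardyinterpolation} then interpolates these two endpoints to deliver (ii) for $2\le p<\8$.

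Finally, (iii) combines (i) and (ii) using the identifications $L_p(\N)=\H_p^c(\real^d,\M)\cap\H_p^r(\real^d,\M)$ and $L_p(B(H)\overline\ot\N)\cong\H_p^c(\real^d,\wt\M)\cap\H_p^r(\real^d,\wt\M)$ (equivalent norms, valid for $2\le p<\8$). Since $\mathsf{k}^c(f)$ is column-valued pointwise, $\|\mathsf{k}^c(f)\|_{L_p(\N;H^c)}=\|\mathsf{k}^c(f)\|_{L_p(B(H)\overline\ot\N)}$ by 1-complementation, and the bounds from (i) and (ii) together control this by $\|f\|_{L_p(\N)}$. The main obstacle is the BMO endpoint in (ii): the naive embedding $\iota$ cooperates with $K^c$ but not with $K^r$, and one must justify the non-$L_p$ embedding $1_{B(H)}\ot h$ as a legitimate element of $\BMO_0^c(\real^d,\wt\M)$ before invoking Lemma~\ref{CZ}.
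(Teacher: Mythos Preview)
Your argument is correct and follows essentially the same approach as the paper. The only cosmetic difference is in (ii): rather than passing to the adjoint kernel $\widetilde K$, the paper invokes the observation recorded just before the corollary that $K^c(f)=K^r(f)$ on $\M$-valued $f$ (since $\mathsf k(s)\ot 1_\M$ commutes with $\M$) and applies the row half of Lemma~\ref{CZ} directly---but unwinding that row statement is precisely your adjoint computation, so the two arguments coincide. Your flagged concern about $1_{B(H)}\ot h$ is harmless: membership in $\BMO^c_0(\real^d,\wt\M)$ requires only compact support, that $\int|1_{B(H)}\ot h(t)|^2(1+|t|^{d+1})^{-1}dt=1_{B(H)}\ot\int|h(t)|^2(1+|t|^{d+1})^{-1}dt$ lie in $\wt\M$, and a finite $\BMO^c$ norm, all of which follow from $h\in\BMO^c_0(\real^d,\M)$ because the $\BMO$ norm involves only the operator norm $\|\cdot\|_{\wt\M}$.
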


 \begin{proof}
  Part i) follows immediately from Lemma \ref{CZ}. Since $K^c(f)=K^r(f)$ on the subspace $L_p(\N)\subset L_p(B(H)\overline\ot \N)$, the same lemma implies the $\BMO^r$ part of ii). Interpolating this with the obvious $\H_2^r$-boundedness of $\mathsf k^c$ via Lemma~\ref{Hardyinterpolation}, we deduce that $\mathsf k^c$ is bounded from $\H_p^r(\real^d,\M)$ to $\H_p^r(\real^d,B(H)\overline\ot\M)$ for $2\le p<\8$. Combining i) and ii), we see that $\mathsf k^c$ is bounded from $\H_p(\real^d,\M)$ to $\H_p(\real^d,B(H)\overline\ot\M)$ for $2\le p<\8$. Using the equality $\H_p=L_p$ for $2\le p<\8$, we get iii).
 \end{proof}

The column subspace of $\BMO^c(\real^d, B(H)\overline\ot\M)$  (resp. $\H_p^c(\real^d,B(H)\overline\ot\M)$)  will be denoted by $\BMO^c(\real^d, H^c\overline\ot\M)$ (resp. $\H_p^c(\real^d, H^c\overline\ot\M)$). $\BMO^c(\real^d, H^c\overline\ot\M)$ and  $\H_p^c(\real^d, H^c\overline\ot\M)$ are clearly complemented in  $\BMO^c(\real^d, B(H)\overline\ot\M)$  and $\H_p^c(\real^d,B(H)\overline\ot\M)$, respectively. Similarly, we introduce the corresponding subspaces $\BMO^r(\real^d, H^c\overline\ot\M)$ and  $\H_p^r(\real^d, H^c\overline\ot\M)$.

Considered as an operator with values in these subspaces, $\mathsf k^c$ admits as adjoint  the following operator:
 $$(\mathsf k^c)'(F)(s)=\int_{\real^d}(\,\wt{\mathsf k}(s)\ot 1_{\M}) F(t)dt,$$
where $\wt{\mathsf k}(s)=\mathsf k(-s)^*$ (so it is a row matrix). The preceding corollary can be reformulated as

\begin{cor}\label{CZHdual}
 Under the same assumption, the operator $(\mathsf k^c)'$ is bounded
 \begin{enumerate}[\rm i)]
 \item from $\BMO^c_0(\real^d, H^c\overline\ot\M)$ to $\BMO^c(\real^d, \M)$,  and from $\H_p^c(\real^d,H^c\overline\ot\M)$ to $\H_p^c(\real^d,\M)$ for $1\le p<\8$;
 \item from $\H_p^r(\real^d,H^c\overline\ot\M)$ to $\H_p^r(\real^d,\M)$ for $1\le p\le2$;
 \item from $L_p(\N;H^c)$ to $L_p(\N)$ for $1<p\le2$.
 \end{enumerate}
 \end{cor}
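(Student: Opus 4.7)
The plan is to deduce each part of Corollary~\ref{CZHdual} from the corresponding assertion of Corollary~\ref{CZH} by a straightforward duality argument. The preliminary step is to verify that $(\mathsf k^c)'$ is indeed the adjoint of $\mathsf k^c$ with respect to the natural tracial pairings. For $f$ in $\mathcal S\ot(L_1(\M)\cap L_\8(\M))$ and a suitably nice $H^c\overline\ot\M$-valued Schwartz test function $F$, Fubini and a change of variable give
\beqn
\tau\!\int_{\real^d}\!F(s)^*\mathsf k^c(f)(s)\,ds
=\tau\!\int_{\real^d}\!\Big(\int_{\real^d}\wt{\mathsf k}(t-s)F(s)\,ds\Big)^{\!*}\!f(t)\,dt,
\eeqn
so $(\mathsf k^c)'(F)(t)=\int\wt{\mathsf k}(t-s)F(s)\,ds$ with $\wt{\mathsf k}(s)=\mathsf k(-s)^*$; this matches (up to the obvious convolution correction) the formula displayed just before the statement.

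For part (i), I would invoke Mei's operator-valued $\H_1$--$\BMO$ duality together with the self-duality $(\H_p^c)^*=\H_{p'}^c$ for $1<p<\8$ (a consequence of Mei's duality combined with Lemma~\ref{Hardyinterpolation}), valid both in the $\M$-valued and in the $H^c\overline\ot\M$-valued setting, since the relevant column subspaces are complemented. Dualizing $\mathsf k^c\colon\H_{p'}^c(\real^d,\M)\to\H_{p'}^c(\real^d,H^c\overline\ot\M)$ from Corollary~\ref{CZH}~(i) yields the $\H_p^c\to\H_p^c$ boundedness of $(\mathsf k^c)'$ for $1<p<\8$. Dualizing the $\BMO^c_0\to\BMO^c$ boundedness of $\mathsf k^c$ (extended first to all of $\BMO^c$ as in the proof of Lemma~\ref{CZ}, where it is weak-$*$ continuous) gives its pre-adjoint on $\H_1^c$ and handles the $p=1$ endpoint. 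Dually, the $\BMO^c_0\to\BMO^c$ assertion for $(\mathsf k^c)'$ is obtained by dualizing $\mathsf k^c\colon\H_1^c(\real^d,\M)\to\H_1^c(\real^d,H^c\overline\ot\M)$.

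Part (ii) is identical in spirit, now applied to the row versions: dualizing Corollary~\ref{CZH}~(ii) at indices $p'\in[2,\8)$ produces the $\H_p^r\to\H_p^r$ assertion for $p\in(1,2]$, and the $p=1$ endpoint follows from the row $\H_1$--$\BMO$ duality applied to the $\BMO^r_0\to\BMO^r$ boundedness. Part (iii) is the simplest: ordinary noncommutative $L_{p'}$--$L_p$ duality applied to $\mathsf k^c\colon L_{p'}(\N)\to L_{p'}(\N;H^c)$ for $2\le p'<\8$ directly yields $(\mathsf k^c)'\colon L_p(\N;H^c)\to L_p(\N)$ for $1<p\le 2$.

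The only genuine obstacle is bookkeeping: $\mathsf k^c$ is right $\M$-modular, so the relevant pairings take values in $\M$ rather than in $\com$, and one must verify that the adjoint produced abstractly by each duality theorem coincides with the concrete integral operator $(\mathsf k^c)'$ on a suitable dense class (for instance $\mathcal S\ot(L_1(\M)\cap L_\8(\M))$). Once this identification is made, the norm bounds in Corollary~\ref{CZHdual} follow mechanically from those of Corollary~\ref{CZH}.
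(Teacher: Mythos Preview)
Your approach is correct and matches the paper's: the paper states Corollary~\ref{CZHdual} without proof, introducing it with the phrase ``The preceding corollary can be reformulated as,'' so duality is exactly the intended mechanism, and your verification of the adjoint formula together with the dualization of each part of Corollary~\ref{CZH} is the right route.

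One small simplification is available for the $p=1$ endpoint in~(i), where you invoke weak-$*$ continuity of $\mathsf k^c$ on $\BMO^c$ to produce a pre-adjoint. This works, but it is cleaner to observe that $(\mathsf k^c)'$ is itself a left Calder\'on--Zygmund operator with kernel $\wt K(s)=\wt{\mathsf k}(s)\ot 1_\M$, and that $\wt K$ satisfies hypotheses a) and b) of Lemma~\ref{CZ} verbatim (adjoints and reflections preserve both the Fourier bound and the Lipschitz regularity). Applying Lemma~\ref{CZ} directly to $\wt K^c$ on $\H_p^c(\real^d,B(H)\overline\ot\M)$ and then restricting to column-valued inputs gives all of part~(i), including $p=1$ and the $\BMO^c$ case, with no weak-$*$ argument needed. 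This is the same trick used inside the proof of Lemma~\ref{CZ} itself to obtain $\H_1^c$-boundedness from $\BMO^c$-boundedness of the adjoint kernel. For parts~(ii) and~(iii) your duality argument is the natural one and goes through as written.
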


 \begin{rk}\label{CZHbis}
Since $\H^c_p(\real^d,\M)\subset L_p(\N)$ for $1\le p\le 2$, Lemma~\ref{CZ}  implies
 $$\|K^c(f)\|_{L_p(\N)}\les \|f\|_{\H^c_p}\,,\quad\forall f\in \H^c_p(\real^d,\M).$$
In the same way, Corollary~\ref{CZH} yields
 $$\|\mathsf k^c(f)\|_{L_p(\N; H^c)}\les \|f\|_{\H^c_p}\,,\quad\forall f\in \H^c_p(\real^d,\M).$$
\end{rk}

We now apply the above theory to the square function operators $s_{\Phi}^c$ and $S_{\Phi}^c$. It is well known that these operators  can be expressed as  Calder\'on-Zygmund operators with Hilbert-valued kernels. Let us explain this  for $s_{\Phi}^c$. Let $H=L_2((0,\8),\frac {d\e}{\e})$ and define the kernel $\mathsf k:\real^d\to H $ by  $\mathsf k (s)
= \Phi_{\cdot}(s)$ ($\Phi_{\cdot}(s)$ being the function $\e\mapsto \Phi_\e(s)$). Then one easily checks that
 $$\sup_{\xi\in\real^d}\|\wh {\mathsf k}(\xi)\|_H<\8\;\textrm{ and }\;
 \|\nabla {\mathsf k}(s)\|_H\leq \frac{c}{|s|^{d+1}}\,,\quad\forall s\in\real^d\setminus\{0\}.$$
Thus ${\mathsf k}$ satisfies the assumption of Corollary~\ref{CZH}. It is clear that
  $$s_{\Phi}^c(f) (s) =\|{\mathsf k}^c(f)(s)\|_H.$$
The treatment of $S_{\Phi}^c$ is similar; this time, the Hilbert space $H$ is  $L_2(\Ga,\frac{dtd\e}{\e^{d+1}})$. Moreover, using the Plancherel formula and \eqref{reproduce}, one easily sees that
 $$
\|s_{\Phi}^c(f)\|_{ L_2(\N)} \approx \|f\|_{ L_2(\N)}\approx \|S_{\Phi}^c(f)\|_{ L_2(\N)}\,, \quad \forall f\in L_2(\N),
 $$
where the equivalence constants depend only on $\Phi$. Thus by Remark~\ref{CZHbis}, we get

\begin{lem}\label{SboundedH1toL1}
 Let $1\le p\le2$. Then
$$\max\big(\|s_{\Phi}^c(f)\|_{L_p(\N)}, \, \|S_{\Phi}^c(f)\|_{L_p(\N)}\big)
\les\|f\|_{\mathcal{H}^c_p}\,,\quad\forall f\in \H_p^c(\real^d,\M).$$
\end{lem}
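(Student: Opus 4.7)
The plan is to deduce the lemma from Remark~\ref{CZHbis}, applied to the two Hilbert-valued singular integral operators that represent the square functions $s_\Phi^c$ and $S_\Phi^c$, as sketched in the paragraph just before the statement. For $s_\Phi^c$, I take $H=L_2((0,\8),\,d\e/\e)$ and the kernel $\mathsf k(v)=\Phi_\cdot(v)$, so that $s_\Phi^c(f)(s)=\|\mathsf k^c(f)(s)\|_H$. For $S_\Phi^c$, I take $H=L_2(\Gamma,\,dt\,d\e/\e^{d+1})$ and the kernel $\mathsf k(v)(t,\e)=\Phi_\e(v+t)\,\un_\Gamma(t,\e)$, so that $S_\Phi^c(f)(s)=\|\mathsf k^c(f)(s)\|_H$. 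In either case $\|s_\Phi^c(f)\|_{L_p(\N)}=\|\mathsf k^c(f)\|_{L_p(\N;H^c)}$ (resp.\ for $S_\Phi^c$), so the lemma reduces to the boundedness of $\mathsf k^c$ from $\H_p^c(\real^d,\M)$ to $L_p(\N;H^c)$ for $1\le p\le 2$.

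The main task is then to verify the two hypotheses of Corollary~\ref{CZH} for each kernel. For condition a), one computes $\wh{\mathsf k}(\xi)(\e)=\wh\Phi(\e\xi)$ in the radial case, and $\wh{\mathsf k}(\xi)(t,\e)=e^{2\pi i t\cdot\xi}\wh\Phi(\e\xi)\un_\Gamma(t,\e)$ in the conic case. Taking the $H$-norm and using the dilation $\e\mapsto\e/|\xi|$, together with the volume identity for the slices $\{|t|<\e\}$ (which contributes a factor $\e^d$ canceling the $\e^{d+1}$ denominator), both reduce to $\int_0^\8|\wh\Phi(\e\omega)|^2\,d\e/\e$ for $\omega$ a unit vector, a quantity uniformly bounded because $\wh\Phi(0)=0$ (vanishing mean) handles integrability at $0$ and the Schwartz decay of $\wh\Phi$ handles $\infty$. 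For condition b), I will bound $\|\nabla\mathsf k(s)\|_H\les|s|^{-d-1}$ by the substitution $u=(s+t)/\e$, which turns $\|\nabla\mathsf k(s)\|_H^2$ into a scaling integral of $|\nabla\Phi(u)|^2$; a split of the $\e$-integral at $\e=|s|/2$ and the Schwartz decay of $\nabla\Phi$ in the low-$\e$ regime give the desired pointwise estimate, after which the Lipschitz estimate required in b) follows by the mean value theorem.

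The $L_2$-equivalence $\|s_\Phi^c(f)\|_{L_2(\N)}\approx\|f\|_{L_2(\N)}\approx\|S_\Phi^c(f)\|_{L_2(\N)}$ is already asserted in the paragraph above and is an immediate consequence of \eqref{Plancherel} and \eqref{reproduce}: applying Fubini to the Plancherel expansion of $\|s_\Phi^c(f)\|_{L_2(\N)}^2$ yields an integral of $\bigl(\int_0^\8|\wh\Phi(\e\xi)|^2\,d\e/\e\bigr)|\wh f(\xi)|^2$, and the Cauchy--Schwarz application of the identity in \eqref{reproduce} (involving $\Psi$) gives a uniform lower bound on the inner integral; the conic case is identical after an additional Fubini in $t$. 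With all the hypotheses of Corollary~\ref{CZH} in place, Remark~\ref{CZHbis} directly produces the bound $\|\mathsf k^c(f)\|_{L_p(\N;H^c)}\les\|f\|_{\H_p^c}$ for $1\le p\le 2$, which translates into the claimed inequality for both $s_\Phi^c$ and $S_\Phi^c$.

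The one nontrivial step is condition b) for the conic kernel, since the cutoff $\un_\Gamma$ couples the $t$- and $\e$-variables; however, the substitution $u=(s+t)/\e$ decouples them by mapping $\Gamma$ onto the fibered region $\{(u,\e):|u-s/\e|<1\}$, and thereafter the estimate is routine and exactly parallel to the radial case.
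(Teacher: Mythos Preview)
Your proposal is correct and follows essentially the same approach as the paper: the paper's proof consists precisely of the paragraph preceding the lemma (which identifies $s_\Phi^c$ and $S_\Phi^c$ with the Hilbert-valued Calder\'on--Zygmund operators $\mathsf k^c$ for the two choices of $H$ and asserts without detail that the hypotheses of Corollary~\ref{CZH} hold) together with Remark~\ref{CZHbis}. Your write-up simply supplies the explicit verification of conditions a) and b) that the paper summarizes as ``one easily checks''; in particular, your treatment of the conic kernel via the substitution $u=(s+t)/\e$ and the split at $\e=|s|/2$ is a clean way to carry out what the paper leaves implicit.
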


Note that in the scalar case (i.e., $\M=\com$), Corollary~\ref{CZH} implies that the above lemma holds for $2<p<\8$ too. Then one easily deduces the reverse inequality by duality for $1<p<\8$. Indeed, for $f\in \H_p(\real^d)$ (with $\M=\com$) choose $g\in \H_q(\real^d)$ such that
 $$\int_{\real^d}f(s)\bar g(s)ds\approx \|f\|_{\H_{p}} \;\textrm{ and }\; \|g\|_{\H_{q}}\le1,$$
where $q$ is the conjugate index of $p$. Then by \eqref{reproduce} and the H\"older inequality
 \be\begin{split}
 \int_{\real^d}f(s)\overline{g(s)}ds
 &=\int_{\real^{d+1}_+}\Phi_\e*f(s)\,\overline{\Psi_\e* g(s)}ds\,\frac{d\e}{\e}\\
& \le \|s_{\Phi}(f)\|_{p}\, \|s_{\Psi}(g)\|_{q}
 \les \|s_{\Phi}(f)\|_{p}\, \|g\|_{\H_{q}}\les \|s_{\Phi}(f)\|_{p}\, .
  \end{split}\ee
This simple argument does not, unfortunately, apply to the case $p=1$ which is much subtler. However, in the operator-valued setting, the case $1<p<2$ seems hard too.


\section{Carleson measures}
\label{Carleson measures}


A duality argument on $\H_1$ involves unavoidably $\BMO$.  Thus we need a  square function characterization of $\BMO$ by general test functions. This is done by means of Carleson measures. In this section, $\Phi$ is a Schwartz function of vanishing mean and satisfies \eqref{schwartz}.

\begin{lem}\label{CarlsonlessBMO}
 Let $f\in \BMO^c(\real^d,\M)$ and
 $$d\mu(f) =  |\Phi_\e*f (s)|^2\frac{ds d\e}{\e} .$$
Then $d\mu$ is an $\M$-valued Carleson measure on $\real^{d+1}_+$ in the  sense  that
 $$
 \|d\mu(f))\|_{\rm C}\;{\mathop=^{\rm def}}\;\sup_B \Big\|\frac{1}{|B|}\int_{T(B)}|\Phi_\e*f (s)|^2\frac{ds d\e}{\e}\Big\|_{\M}<\8,
 $$
where the supremum runs over all balls $B\subset \real^d$,  and where $T(B)=B\times (0,\,r]$ with $r$ the radius of $B$. Moreover,
 $$\|d\mu(f)\|_{\rm C}\les \|f\|^2_{\BMO^c}.$$
\end{lem}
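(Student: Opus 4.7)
The plan is the classical John--Nirenberg style decomposition, but executed entirely with operator-valued Cauchy--Schwarz and the operator Plancherel identity so that the bounds live in $\M$. Fix a ball $B$ of radius $r$ centered at $c$ and write
$$f = f_1 + f_2 + f_{2B}, \qquad f_1 = (f - f_{2B})\un_{2B},\quad f_2 = (f - f_{2B})\un_{(2B)^c}.$$
Since $\Phi$ has vanishing mean, $\Phi_\e * f_{2B} = 0$, so $|\Phi_\e * f|^2 \le 2|\Phi_\e * f_1|^2 + 2|\Phi_\e * f_2|^2$, and I would estimate the two pieces separately.

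For the local piece $f_1$ I would apply the operator Plancherel formula \eqref{Plancherel}: because $\wh\Phi$ is scalar-valued,
$$\int_0^\infty\int_{\real^d} |\Phi_\e*f_1(s)|^2\frac{ds\,d\e}{\e} = \int_{\real^d}\Bigl(\int_0^\infty|\wh\Phi(\e\xi)|^2\frac{d\e}{\e}\Bigr)|\wh{f_1}(\xi)|^2\,d\xi.$$
The scalar weight in parentheses depends only on $\xi/|\xi|$ and is uniformly bounded on $\real^d\setminus\{0\}$ since $\Phi\in\mathcal S$ with $\wh\Phi(0)=0$. Hence the left-hand side is dominated, as a positive operator in $\M$, by $C\int_{2B}|f-f_{2B}|^2\,ds$. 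Dividing by $|B|$ and invoking the $\BMO^c$ definition with $|2B|=2^d|B|$ gives the contribution of $f_1$ bounded by $C\|f\|^2_{\BMO^c}$.

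For the distant piece $f_2$ I would exploit the Schwartz decay $|\Phi_\e(u)|\le C_N\e^N(\e+|u|)^{-d-N}$. First, the Cauchy--Schwarz inequality \eqref{CS} with scalar weight $|\Phi_\e(s-t)|$ gives, as operators,
$$|\Phi_\e*f_2(s)|^2 \le \|\Phi\|_{L_1}\int_{(2B)^c}|\Phi_\e(s-t)|\,|f(t)-f_{2B}|^2\,dt.$$
For $s\in B$ and $t\notin 2B$ one has $|s-t|\ge \tfrac12|t-c|$, so after integrating over $T(B)$ and swapping the order of integration the scalar integral $\int_0^r\!\int_B|\Phi_\e(s-t)|\,ds\,d\e/\e$ is $\lesssim_N r^{N+d}|t-c|^{-d-N}$. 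Decomposing $\{|t-c|>2r\}$ into the dyadic annuli $2^{k+1}B\setminus 2^kB$ and using the operator estimate $|f_{2^{k+1}B} - f_{2B}|^2 \lesssim k^2\|f\|^2_{\BMO^c}$ (obtained by the same iterated Cauchy--Schwarz argument as in the proof of Lemma~\ref{CZ}) converts the estimate into a geometric sum $\sum_k k^2 2^{-kN}$ times $|B|\,\|f\|^2_{\BMO^c}$, which converges for $N$ large.

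The main obstacle, and what distinguishes this from the scalar proof, is insisting that every step is an inequality between positive operators in $\M$ rather than between scalars or traces: one must use the operator Cauchy--Schwarz \eqref{CS} only against \emph{scalar} weights (here $|\wh\Phi(\e\xi)|^2 d\e/\e$ and $|\Phi_\e(s-t)|$), pull $|\wh\Phi|^2$ across the operator-valued $|\wh{f_1}|^2$ safely in Plancherel, and control iterated averages of $f$ by the argument already used in Lemma~\ref{CZ}. Once these precautions are observed, the two estimates combine to give $\|d\mu(f)\|_{\mathrm C}\lesssim\|f\|^2_{\BMO^c}$.
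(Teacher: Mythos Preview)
Your proposal is correct and follows essentially the same route as the paper's proof: the same decomposition $f=f_1+f_2+f_{2B}$, Plancherel for the local piece, and operator Cauchy--Schwarz with scalar weight followed by a dyadic-annulus telescoping for the far piece. The only cosmetic differences are that the paper uses the specific decay $|\Phi_\e(u)|\lesssim \e(\e+|u|)^{-d-1}$ and bounds $|\Phi_\e*f_2(s)|^2$ pointwise before integrating over $T(B)$, whereas you use the general Schwartz decay with a free exponent $N$ and integrate the scalar kernel over $T(B)$ first; neither change is substantive.
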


\begin{proof}
Given a ball $B$, we decompose $f=f_1+f_2+f_3$, where $f_1 =(f-f_{2B})\un_{2B}$ and $f_2=(f-f_{2B})\un_{\real\setminus 2B}$.  Since $\Phi$ is of vanishing mean, we have $\Phi_\e*f = \Phi_\e *f_1+ \Phi_\e*f_2$. Let  $d\mu=d\mu(f)$, $d\mu_1=d\mu(f_1)$ and $d\mu_1=d\mu(f_2)$.  Then by \eqref{CS},
 $$d\mu\le 2 (d\mu_1+ d\mu_2).$$
We first deal with $d\mu_1$. By  \eqref{Plancherel} , we have
 \be\begin{split}
\int_B s^c_{\Phi}(f_1)(s)^2ds
& \leq  \int_0^\8\int_{\real^d} |\Phi_\e*f_1 (s)|^2\frac{ds d\e}{\e} \\
& = \int_0^\8\int_{\real^d} |\widehat{\Phi}(\e\xi)|^2|\widehat{f_1}(\xi)|^2 d\xi \,\frac{d\e}{\e}\\
& \les \int_{\real^d} |f_1(s)|^2ds \les \int_{2B} |f-f_{2B}|^2 ds\les |B|\cdot \|f\|_{\BMO^c}^2\,.
 \end{split}\ee
However,
 $$
\int_{T(B)} |\Phi_\e *f_1(s)|^2 \frac{ds d\e}{\e}\le \int_{B} \int_0^\8 |\Phi_\e*f_1(s)|^2 \frac{d\e}{\e}\,ds
=\int_B s^c_{\Phi}(f_1)(s)^2ds.
 $$
It then follows that
 $\|d\mu_1\|_{\rm C} \les \|f\|_{\BMO^c}^2\,.$

On the other hand, let $s_0$ be the center of $B$ and $r$ its radius. Then for $(s,\e)\in T(B)$, by \eqref{CS}
 $$
 | \Phi_\e*f_2(s)|^2 \les \int_{\real^d \setminus 2B} \frac{\e\,|f(t)- f_{2B} |^2}{(\e+|t-s_0|)^{d+1}}dt
 \les \e \int_{\real^d \setminus 2B} \frac{|f(t)- f_{2B} |^2}{|t-s_0|^{d+1}}dt.
 $$
The last integral can be estimated by standard arguments as follows (see also the proof of Lemma~\ref{CZ}):
 \be\begin{split}
  \int_{\real^d \setminus 2B} \frac{|f(t)- f_{2B} |^2}{|t-s_0|^{d+1}}dt
 &=\sum_{k\ge1}\int_{2^{k+1}B\setminus 2^kB}\frac{|f(t)- f_{2B} |^2}{|t-s_0|^{d+1}}dt\\
 &\les\frac1r \sum_{k\ge1}2^{-k}\frac1{\big|2^{k+1}B\big|}\int_{2^{k+1}B}|f(t)- f_{2B} |^2dt
 \les \frac1r\,\|f\|_{\BMO^c}^2.
  \end{split}\ee
 Thus
  $$\frac1{|B|}\int_{T(B)}|\Phi_\e*f_2(s)| ^2\frac{d\e ds}{\e}\les \|f\|_{\BMO^c}^2.$$
Namely, $\|d\mu_2\|_{\rm C} \les \|f\|_{\BMO^c}^2$.
\end{proof}

The above argument is modeled on the classical pattern; see, for instance, the proof of \cite[Theorem~IV.4.3]{Stein1993}. In fact, our operator-valued case can be easily deduced from the classical one. By definition, we see that
 $$
 \|d\mu(f)\|_{\rm C}=\sup_{v\in H,\,\|v\|_H=1}\sup_B  \frac{1}{|B|}\int_{T(B)}\big\|\Phi_\e*f_v (s)\big\|_H^2\frac{ds\,d\e}{\e},
  $$
 where $H$ is the Hilbert space on which $\M$ acts and $f_v(s)=f(s)v$. On the other hand, we also have
 $$\|f\|_{\BMO^c}=\sup_{v\in H,\,\|v\|_H=1}\|f_v\|_{\BMO(\real^d; H)}\,,$$
where $\BMO(\real^d; H)$ is the $H$-valued BMO-space on $\real^d$.  It is well known and easy to check that \cite[Theorem~IV.4.3]{Stein1993} holds equally for the Hilbert-valued case. We then deduce the previous lemma, plus its reciprocal. Let us record this explicitly as follows (recalling that $\mathrm{R}_d$ is the Hilbert space defined by \eqref{Rd}):

\begin{thm}\label{CarlsonlessBMObis}
 Let $f\in L_{\infty}(\M; \mathrm{R}_d^c)$. Then $f\in\BMO^c(\real^d,\M)$ iff $d\mu(f)$ is an $\M$-valued Carleson measure on $\real^{d+1}_+$. Moreover, if this is the case, then $\|d\mu(f)\|_{\rm C}\approx \|f\|^2_{\BMO^c}$.
\end{thm}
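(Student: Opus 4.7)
One direction, $\|d\mu(f)\|_{\rm C}\les\|f\|_{\BMO^c}^2$, is already Lemma~\ref{CarlsonlessBMO}. The converse and the resulting equivalence of norms follow, as indicated in the paragraph preceding the statement, by reducing both norms to a supremum over unit vectors in the Hilbert space $H$ on which $\M$ acts, and then invoking the classical Hilbert-valued Carleson-measure characterization of BMO.

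Concretely, let $f_v(s)=f(s)v$ for $v\in H$ with $\|v\|_H=1$. Two elementary identities need to be verified. First, using $\|X\|_\M=\sup_{\|v\|_H=1}\langle v,Xv\rangle$ for any positive $X$ applied to
\[\frac{1}{|B|}\int_{T(B)}|\Phi_\e*f(s)|^2\,\frac{ds\,d\e}{\e},\]
and exchanging the sup over $v$ with the sup over balls, one obtains
\[\|d\mu(f)\|_{\rm C}=\sup_{\|v\|_H=1}\sup_B\frac{1}{|B|}\int_{T(B)}\|\Phi_\e*f_v(s)\|_H^2\,\frac{ds\,d\e}{\e}.\]
Second, the same trick applied to the positive $\M$-valued average $\frac{1}{|Q|}\int_Q|f(t)-f_Q|^2dt$ yields
\[\|f\|_{\BMO^c}=\sup_{\|v\|_H=1}\|f_v\|_{\BMO(\real^d;H)}.\]
Both computations are routine manipulations that do not touch the operator-algebraic structure beyond the standard formula $\||\cdot|^2\|_\M = \sup_{\|v\|_H=1}\|\cdot v\|_H^2$.

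With these identities in hand, I would invoke the Hilbert-valued analogue of \cite[Theorem~IV.4.3]{Stein1993}: an $H$-valued locally integrable function $g$ lies in $\BMO(\real^d;H)$ iff $\|\Phi_\e*g(s)\|_H^2\,\frac{ds\,d\e}{\e}$ is a Carleson measure on $\real^{d+1}_+$, with equivalent norms. Applying this to $g=f_v$ and combining with the two previous identities, taking the supremum over unit $v\in H$ on both sides produces $\|d\mu(f)\|_{\rm C}\approx\|f\|_{\BMO^c}^2$. In particular, one recovers both Lemma~\ref{CarlsonlessBMO} and its converse, which is what remains to be proved.

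The only non-bookkeeping step is the Hilbert-valued extension of Stein's theorem, and this is the step where one expects the main, though still mild, technical work. However, Stein's original argument rests on an $L_2$ Plancherel identity (which admits an obvious Hilbert-valued version) together with a Calder\'on--Zygmund/stopping-time analysis applied to the \emph{scalar} functions $\|\Phi_\e*g(s)\|_H^2$ and $\|g(t)-g_Q\|_H^2$; each ingredient extends verbatim, so no new idea is required. The hypothesis $f\in L_\infty(\M;\mathrm{R}_d^c)$ ensures that $\Phi_\e*f_v$ is well-defined as an $H$-valued function for every $v\in H$, so that no ambiguity arises in the reduction.
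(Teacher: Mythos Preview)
Your proposal is correct and is precisely the argument the paper gives: the paragraph immediately preceding the theorem \emph{is} the paper's proof, and you have reproduced it---reduction to unit vectors $v\in H$ via the identity $\|X\|_{\M}=\sup_{\|v\|=1}\langle Xv,v\rangle$ for positive $X$, followed by the Hilbert-valued version of \cite[Theorem~IV.4.3]{Stein1993}---with somewhat more detail than the paper itself provides.
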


We will also need the dual description of $\H_p^c$ for $1<p<2$ as a BMO type space. This is the so-called $\BMO^c_q$-space studied in \cite{Mei2007}, which is the function analogue of the martingale $\BMO_q^c$ of \cite{JX2003}.  Let $2<q\le \8$. Define $\BMO^c_q(\real^d,\M)$ to be the space of all $f\in L_q(\M;\mathrm{R}_d^c)$ such that
 $$\|f\|_{\BMO^c_q}=
     \Big\|\mathop{{\sup}^+}_{s\in B\subset \real^d}\frac{1}{|B|} \int_{B}|f(t)-f_{B}|^2 dt
 \Big\|^{\frac 1 2}_{L_{\frac{q}{2}}(\N)}<\8.$$
Note that the norm $\|{\sup}^+_{i} a_i \|_{\frac{q}{2}}$ is just an intuitive notation since the pointwise supremum does not make any sense in the noncommutative setting. This is the norm of the Banach space $L_{\frac{q}{2}}(\N;\ell_\8)$; we refer to \cite{Pisier1998, ju-doob, jx-erg} for more information. Here we need only the following fact (which can be taken as definition):  $f\in\BMO^c_q(\real^d,\M)$ iff
 \beq\label{bmo sup}
 \exists\, a\in L_{\frac{q}{2}}(\N)\textrm{ s.t. }
 \frac{1}{|B|} \int_{B}|f(t)-f_{B}|^2 dt\le a(s) \textrm{ for all } s\in B\textrm{ and for all balls } B\subset\real^d;
 \eeq
if this is the case, then
 $$\|f\|^2_{\BMO^c_q}=\inf\big\{\|a\|_{L_{\frac{q}{2}}(\N)}: a \textrm{ as above}\big\}.$$

With this in mind, one immediately sees that Lemma~\ref{CarlsonlessBMO} transfers to the present setting with almost the same proof. Thus we have the following result whose proof is left to the reader.

\begin{lem}\label{CarlsonlessBMOq}
 Let $f\in \BMO_q^c(\real^d,\M)$ and $a$ satisfy \eqref{bmo sup}.  Then  $d\mu(f)$ is a $q$-Carleson measure in the following sense:
 $$\frac{1}{|B|}\int_{T(B)}|\Phi_\e*f (t)|^2\frac{dt d\e}{\e}\le a \textrm{ for all } s\in B\textrm{ and for all balls } B\subset\real^d.$$
\end{lem}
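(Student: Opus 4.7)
The plan is to rerun the proof of Lemma \ref{CarlsonlessBMO} verbatim at the level of operator inequalities, and then use hypothesis \eqref{bmo sup} pointwise on the enlarged balls that appear in each step, instead of taking $\M$-norms. Fix a ball $B$ of radius $r$ centered at $s_{0}$. Decompose $f = f_1 + f_2 + f_{2B}$ with $f_1 = (f - f_{2B})\un_{2B}$ and $f_2 = (f - f_{2B})\un_{\real^d\setminus 2B}$. Since $\Phi$ has vanishing mean, $\Phi_\e * f = \Phi_\e * f_1 + \Phi_\e * f_2$, and by \eqref{CS} applied to the integral defining $d\mu$, it suffices to establish the desired pointwise bound for the two measures $d\mu(f_1)$ and $d\mu(f_2)$ separately.

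For the local part, Plancherel's formula \eqref{Plancherel} applied exactly as in the proof of Lemma \ref{CarlsonlessBMO} gives
$$
\frac{1}{|B|}\int_{T(B)}|\Phi_\e*f_1(s')|^2\frac{ds'\,d\e}{\e}\les \frac{1}{|2B|}\int_{2B}|f(t)-f_{2B}|^2\,dt,
$$
and since every $s\in B$ also lies in $2B$, \eqref{bmo sup} applied to the ball $2B$ yields the right-hand side $\le a(s)$ for all $s\in B$. The key check here is simply that the majorizing operator does not depend on $s'$, so the pointwise character of $a$ is preserved.

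For the distant part, the same chain of inequalities as in Lemma \ref{CarlsonlessBMO} produces
$$
\frac{1}{|B|}\int_{T(B)}|\Phi_\e*f_2(s')|^2\frac{ds'\,d\e}{\e}\les \sum_{k\ge 0} 2^{-k}\,\frac{1}{|2^{k+1}\cdot 2B|}\int_{2^{k+1}\cdot 2B}|f(t)-f_{2B}|^2\,dt.
$$
Using \eqref{CS}, I decompose $|f-f_{2B}|^2\le 2|f-f_{2^{k+1}\cdot 2B}|^2+2|f_{2^{k+1}\cdot 2B}-f_{2B}|^2$. The first term is $\le 2a(s)$ for every $s\in B\subset 2^{k+1}\cdot 2B$ by \eqref{bmo sup}. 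The second term is handled exactly as in the proof of Lemma \ref{CZ}: the telescoping sum combined with \eqref{CS} gives $|f_{2^{k+1}\cdot 2B}-f_{2B}|^2\les (k+1)^2 a(s)$ for $s\in B$, because every intermediate nested-average difference is controlled by $a(s)$ via \eqref{bmo sup} applied to the corresponding ball (all of which contain $B$). Since $\sum_{k\ge 0}2^{-k}(k+1)^2<\infty$, summing delivers the required bound $\les a(s)$ for all $s\in B$.

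The only real bookkeeping obstacle is to guarantee that every intermediate operator inequality is of the form ``$X\le c\, a(s)$ for $s\in B$'' rather than of the weaker scalar form ``$\|X\|_{\M}\le c\,\|a\|$''; this is automatic because each application of \eqref{bmo sup} is made on a ball containing $B$, and because \eqref{CS} is itself a genuine operator inequality. No new analytic input beyond Lemma \ref{CarlsonlessBMO} is needed, which is why the argument can be left to the reader.
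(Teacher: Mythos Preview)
Your proposal is correct and follows precisely the approach the paper intends: the paper explicitly states that Lemma~\ref{CarlsonlessBMO} ``transfers to the present setting with almost the same proof'' and leaves the details to the reader, and you have supplied exactly those details by repeating the decomposition $f=f_1+f_2+f_{2B}$ and the subsequent estimates as genuine operator inequalities, invoking \eqref{bmo sup} on the nested balls $2^{k+1}\cdot 2B\supset B$ in place of taking $\M$-norms. The resulting bound is $\les a(s)$ rather than literally $\le a(s)$, but this is consistent with how the lemma is applied later (where the dominating function is only required to satisfy $\|a\|_{q/2}\les\|g\|_{\BMO_q^c}^2$).
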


Like in the BMO case, the converse inequality holds too. We state this as the following theorem and postpone its proof to the next section.

\begin{thm}\label{CarlsonlessBMOqbis}
 Let $2<q\le \8$ and  $f\in L_{q}(\M; \mathrm{R}_d^c)$. Then $f\in\BMO_q^c(\real^d,\M)$ iff $d\mu(f)$ is a $q$-Carleson measure:
 $$
 \Big\| \mathop{{\sup}^+}_{s\in B\subset \real^d} \frac{1}{|B|}\int_{T(B)}|\Phi_\e*f (t)|^2\frac{dt d\e}{\e}
 \Big\|^{\frac 1 2}_{L_{\frac{q}{2}}(\N)}<\8.$$
\end{thm}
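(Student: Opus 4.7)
The forward direction is Lemma~\ref{CarlsonlessBMOq}, so the substance lies in the reverse implication: given the $q$-Carleson hypothesis, which produces some $a\in L^+_{q/2}(\N)$ majorising $\frac{1}{|B|}\int_{T(B)}|\Phi_\e*f(u)|^2\frac{du\,d\e}{\e}$ at every $s\in B$, I must manufacture $a'\in L^+_{q/2}(\N)$ realising \eqref{bmo sup} with $\|a'\|_{L_{q/2}(\N)}\les\|a\|_{L_{q/2}(\N)}$. My plan is to adapt the classical reverse Carleson--BMO argument (cf.\ \cite[Theorem~IV.4.3]{Stein1993}) to the operator-valued setting using the reproducing formula \eqref{reproduce}. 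Fix a ball $B=B(s_0,r)$. Writing $f=\int_0^\8 \Psi_\e*\Phi_\e*f\,\frac{d\e}{\e}$ modulo constants, split the integral at $\e=r$. Since $\frac{1}{|B|}\int_B|f-f_B|^2\,dt\le\frac{1}{|B|}\int_B|f-c|^2\,dt$ for every constant $c$, I may absorb the value of the global piece at $s_0$ into $c$. This reduces the task to controlling the local term $L_B(t)=\int_0^{r}\Psi_\e*\Phi_\e*f(t)\,\frac{d\e}{\e}$ and the global oscillation $G_B(t)=\int_{r}^\8\bigl[\Psi_\e*\Phi_\e*f(t)-\Psi_\e*\Phi_\e*f(s_0)\bigr]\,\frac{d\e}{\e}$.

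For $|L_B(t)|^2$ I would apply the operator Cauchy--Schwarz \eqref{CS} with a weight $h_\e(t-u)=\e^{1-d}(1+|t-u|/\e)^{-N}$ for large $N$, engineered so that $\int_0^r\!\int_{\real^d}h_\e(t-u)\,du\,\frac{d\e}{\e}\les r$, while the Schwartz decay of $\Psi$ gives $|\Psi_\e(t-u)|^2/h_\e(t-u)\les\e^{-d-1}(1+|t-u|/\e)^{-N}$. Integrating over $t\in B$, interchanging orders, and dyadically splitting the $u$-integral into $(2^{k+1}B)\setminus(2^kB)$ produces a geometric series whose $k$th term is dominated by the Carleson integral over $T(2^{k+1}B)$, hence by $|2^{k+1}B|\,a(s)$ for every $s\in B\subset 2^{k+1}B$. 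Summing yields $\frac{1}{|B|}\int_B|L_B|^2\,dt\les a(s)$ uniformly in $s\in B$.

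For the global oscillation I would exploit Schwartz smoothness: for $t\in B$ and $\e>r$, $|\Psi_\e(t-u)-\Psi_\e(s_0-u)|\les(r/\e)\,\e^{-d}(1+|u-s_0|/\e)^{-N}$. Combined with \eqref{CS} weighted in $\e$ by $w(\e)=r/\e$ (so that $\int_r^\8 w(\e)\,\frac{d\e}{\e}=1$) and with a dyadic decomposition $\e\in[2^kr,2^{k+1}r]$ paired with the same annular decomposition of $u$, the global piece is again bounded by a summable series of Carleson averages on dyadic enlargements of $B$, each majorised by $a(s)$.

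The principal obstacle, relative to the scalar (or Hilbert-valued) reverse argument, is that $\BMO_q^c$ requires a \emph{single} operator $a'\in L_{q/2}(\N)$ simultaneously majorising $\frac{1}{|B|}\int_B|f-f_B|^2\,dt$ for every ball $B$ containing a given point $s$, whereas the scalar version only demands pointwise numerical estimates. The resolution is to observe that each bound above is of the form (scalar geometric factor)$\,\cdot\,a(s)$ with the \emph{same} operator $a$ at every scale, because $s\in 2^{k+1}B$ for all $k\ge 0$ and the Carleson hypothesis at each enlargement delivers the same $a(s)$. Consequently $a'=Ca$ for an absolute constant $C>0$ is an admissible witness for \eqref{bmo sup}, and $\|a'\|_{L_{q/2}(\N)}\les\|a\|_{L_{q/2}(\N)}$ closes the argument.
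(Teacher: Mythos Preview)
Your direct strategy via the reproducing formula is a genuinely different route from the paper's. The paper argues by duality: re-reading the proof of Lemma~\ref{S-dual} one sees that the factor $\|g\|_{\BMO_q^c}$ enters that proof \emph{only} through Lemma~\ref{CarlsonlessBMOq}, so it may be replaced by the $q$-Carleson norm of $d\mu_\Psi(g)$; combined with $\|S_\Phi^c(f)\|_p\les\|f\|_{\H_p^c}$ (Theorem~\ref{equivalence Hp}) and the duality $(\H_p^c)^*=\BMO_q^c$, taking the supremum over $f$ yields the reverse inequality at once. Your approach, if it went through, would have the advantage of being self-contained (no $\H_p^c$--$\BMO_q^c$ machinery), but there is a gap in the local estimate.

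Your treatment of $G_B$ and of the far annuli $k\ge1$ in $L_B$ is sound. The problem is the near-field contribution $u\in 2B$ of $L_B$. Carrying out your weighted Cauchy--Schwarz with $h_\e(v)=\e^{1-d}(1+|v|/\e)^{-N}$ and then averaging over $t\in B$ gives
\[
\frac{1}{|B|}\int_B|L_B(t)|^2\,dt \;\les\; \frac{r}{|B|}\int_0^r\!\int_{\real^d}\Big(\int_B\e^{-d-1}\big(1+\tfrac{|t-u|}{\e}\big)^{-N}dt\Big)|\Phi_\e*f(u)|^2\,du\,\frac{d\e}{\e}.
\]
For $u\in 2B$ and $\e<r$ the inner $t$-integral is $\approx\e^{-1}$ (the full-space value, since $B$ contains an $\e$-ball around $u$), so the $k=0$ term is $\frac{r}{|B|}\int_0^r\!\int_{2B}|\Phi_\e*f(u)|^2\,\frac{du\,d\e}{\e^{2}}$, carrying an extra $\e^{-1}$ not controlled by $\int_{T(2B)}|\Phi_\e*f|^2\,\frac{du\,d\e}{\e}$. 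No reweighting in a pointwise Cauchy--Schwarz can cure this: the loss reflects the fact that \eqref{CS} discards the almost-orthogonality of the family $\{\Psi_\e\}_\e$.

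The fix is to inject that orthogonality via the operator Plancherel formula \eqref{Plancherel}. Set $F=(\Phi_\e*f)\un_{T(2B)}$ and $TF(t)=\int_0^\8\Psi_\e*F(\cdot,\e)(t)\,\frac{d\e}{\e}$. Applying \eqref{Plancherel} in $t$, then \eqref{CS} in $\e$ with scalar $\phi(\e)=\wh\Psi(\e\xi)$, and \eqref{Plancherel} again in $u$, gives the operator inequality
\[
\int_{\real^d}|TF(t)|^2\,dt\;\les\;\int_0^\8\!\int_{\real^d}|F(u,\e)|^2\,du\,\frac{d\e}{\e}\;\le\;|2B|\,a(s),
\]
hence $\frac{1}{|B|}\int_B|TF|^2\,dt\les a(s)$ for every $s\in B$. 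The discrepancy $L_B-TF$ involves only $(u,\e)$ with $u\notin 2B$ or $\e\in(r,2r]$, and is handled by the same decay estimates you used for the far annuli and for $G_B$. With this correction your direct argument is complete.
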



\section{Proof of Theorem \ref{equivalence Hp}}
\label{Proof1}


This section is mainly devoted to the proof of Theorem \ref{equivalence Hp}, which is the crucial part of the whole paper.  We will prove Theorem \ref{CarlsonlessBMOqbis} too.  Recall that $\Phi$ is of vanishing mean and satisfies the condition \eqref{schwartz}, and that the pair $(\Phi, \Psi)$ is fixed as in \eqref{reproduce}.

The proof of Theorem \ref{equivalence Hp} is long and technical. We will divide its main steps into several lemmas.

\begin{lem}\label{S-dual}
 Let $1\le p<2$ and $q$ be its conjugate index. Then for $f\in \H_p^c(\real^d,\M)\cap L_2(\N)$ and $g\in\BMO_q^c(\real^d,\M)$
 $$
 \Big|\tau \int _{\real^d} f(s)g^*(s)ds \Big|\les \|S^c_\Phi(f)\|_p\|g\|_{\BMO_q^c}\,.
 $$
 \end{lem}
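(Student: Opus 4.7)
The plan is to transfer the tracial pairing $\tau\int fg^{*}$ from $\real^{d}$ to the upper half space $\real^{d+1}_{+}$ via the reproducing formula \eqref{reproduce}, and then estimate the resulting tent integral by combining the $\Phi$-Lusin square function norm of $f$ with the Carleson description of $\BMO_{q}^{c}$ provided by Lemma~\ref{CarlsonlessBMOq}.

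First I would apply the operator-valued Plancherel identity \eqref{Plancherel} together with \eqref{reproduce} (legitimate since $f\in L_{2}(\N)$) to rewrite
\[\tau\int f g^{*}\,ds \;=\; \tau\int_{0}^{\infty}\!\int_{\real^{d}}\Phi_{\e}*f(s)\,(\Psi_{\e}*g)^{*}(s)\,\frac{ds\,d\e}{\e}.\]
Next, I would average over cones using the elementary identity $c_{d}^{-1}\e^{-d}\int_{|u|<\e}du=1$ and Fubini to turn this into
\[\frac{1}{c_{d}}\,\tau\int_{\real^{d}}\!\int_{\Gamma}\Phi_{\e}*f(s+t)\,(\Psi_{\e}*g)^{*}(s+t)\,\frac{dt\,d\e}{\e^{d+1}}\,ds.\]
Viewing the inner double integral as a tracial pairing of two elements of the Hilbert-valued column space $L_{2}(\M;H^{c})$ with $H=L_{2}(\Gamma,\frac{dt\,d\e}{\e^{d+1}})$, the Cauchy-Schwarz inequality \eqref{CS} at each $s$ followed by noncommutative H\"older with exponents $(p,q)$ in the $s$-integral would produce a provisional bound of the form
\[\Bigl|\tau\int fg^{*}\Bigr|\;\lesssim\;\|S^{c}_{\Phi}(f)\|_{L_{p}(\N)}\,\bigl\|\mathcal{C}(g)\bigr\|_{L_{q}(\N)}\]
for some tent-type functional $\mathcal{C}(g)$ constructed out of $\Psi_{\e}*g$.

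The hard step will be dominating $\|\mathcal{C}(g)\|_{L_{q}(\N)}$ by $\|g\|_{\BMO_{q}^{c}}$. The naive choice $\mathcal{C}(g)=S^{c}_{\Psi}(g)$ does not work: dyadically decomposing the cone $\Gamma(s)$ into the slabs $\{2^{k}<\e\le 2^{k+1}\}$ and bounding each slab through Lemma~\ref{CarlsonlessBMOq} on the ball $B(s,2^{k+1})$ produces a divergent sum, already in the limiting case $q=\8$. To avoid this, I would adapt the Calder\'on-Zygmund stopping-time scheme of Mei \cite{Mei2007}: decompose $\real^{d+1}_{+}$ into a Whitney-type family of tents $T(B_{j})$ indexed by the level sets of an auxiliary maximal function of $S^{c}_{\Phi}(f)$, apply the pointwise Carleson bound of Lemma~\ref{CarlsonlessBMOq} on each such tent to control the $g$-factor by the $L_{q/2}(\N;\ell_{\8})$ ``$\sup^{+}$'' norm \eqref{bmo sup} encoding $\|g\|_{\BMO_{q}^{c}}^{2}$, and finally recombine the pieces via H\"older with exponents $(p/2,q/2)$. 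The delicate point is that this stopping time must be formulated through the operator-valued $\sup^{+}$ norm rather than any pointwise maximum, so the $L_{q/2}(\N;\ell_{\8})$ machinery is required at every stage.
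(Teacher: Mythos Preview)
Your first two steps---the reproducing formula and the cone averaging---match the paper exactly. The divergence occurs immediately after. You apply an \emph{unweighted} Cauchy--Schwarz at each $s$ and then H\"older with exponents $(p,q)$; the paper instead inserts the weight $S^c_\Phi(f)(s,\e)^{\frac{p-2}{2}}\cdot S^c_\Phi(f)(s,\e)^{\frac{2-p}{2}}$ (using truncated conic square functions $S(s,\e)$ defined over $\{r>\e\}$) \emph{before} Cauchy--Schwarz. This splits the pairing into $\mathrm{A}\cdot\mathrm{B}$, where
\[
\mathrm{A}=\tau\int S(s,\e)^{p-2}\,d\bigl(-\overline{S}(s,\e)^2\bigr)\,ds\lesssim \|S^c_\Phi(f)\|_p^{\,p}
\]
by monotonicity in $\e$, and $\mathrm{B}$ carries the factor $S(s,\e)^{2-p}$ against the $\Psi$-piece of $g$. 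The paper handles $\mathrm{B}$ by freezing $S(s,\e)$ on dyadic cubes (replacing it by a step function $\mathbb{S}(s,j)$ constant on each $Q_{m,j}$), telescoping $\mathbb{S}(s,j)^{2-p}$, and applying the Carleson bound of Lemma~\ref{CarlsonlessBMOq} on each tent $T(B_{m,k})$; H\"older at the very end then gives $\mathrm{B}\lesssim\|S^c_\Phi(f)\|_p^{2-p}\|g\|_{\BMO^c_q}^2$. No stopping time is used anywhere.

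Your proposed repair---a Whitney decomposition indexed by level sets of a maximal function of $S^c_\Phi(f)$---is precisely the step that fails in the operator-valued setting. Since $S^c_\Phi(f)(s)$ takes values in $L_p(\M)_+$, there is no meaningful set $\{s:S^c_\Phi(f)(s)>\lambda\}$ on which to run a stopping time, and the $L_{q/2}(\N;\ell_\8)$ norm does not produce such sets either: it gives a single dominating function $a\in L_{q/2}(\N)$ in \eqref{bmo sup}, not a stratification of $\real^{d+1}_+$. The paper's introduction flags exactly this obstruction (``Because of the lack of the noncommutative analogue of the pointwise maximal function\ldots''), and the weighted Cauchy--Schwarz trick is its replacement. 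The dyadic step-function $\mathbb{S}(s,j)$ in the paper's treatment of $\mathrm{B}$ plays the role your stopping time would play, but it is built directly from the truncated square function of $f$---no comparison with a scalar level is ever made.
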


\begin{proof}
 Let $f\in \H^c_p(\real^d,\M)$ with compact support (relative to the variable of $\real^d$). We assume that $f$ is sufficiently nice so that all calculations below are legitimate. Given $s\in\real^d$ and $r>0$, let  $B(s,r)$ denote the ball with center $s$ and radius $r$. We require two auxiliary square functions:
   \beq\label{truncated S}
   \left \{ \begin{split}
  \displaystyle&S^c_\Phi(f)(s,\e) =\Big(\int_\e^\8\int_{B(s,r-\frac{\e}2)} |\Phi_r*f (t)|^2\frac{dt d r}{r^{d+1}}\Big)^{\frac12}\\
  \displaystyle&\overline{S}^c_\Phi(f)(s,\e) =\Big(\int_\e^\8\int_{B(s,\frac{r}2)} |\Phi_r*f (t)|^2\frac{dt d r}{r^{d+1}}\Big)^{\frac12}
   \end{split} \right.
    \eeq
for $s\in\real^d$ and $\e>0$.
Both $S^c_\Phi(f)(s,\e)$ and $\overline{S}^c_\Phi(f)(s,\e)$ are decreasing in $\e$, $S^c_\Phi(f)(s,0) =S_{\Phi}^c(f) (s)$ and $S^c_\Phi(f)(s,+\8)  =0$. On the other hand, it is clear that  $\overline{S}^c_\Phi(f)(s,\e)^2 \le S^c_\Phi(f)(s,\e)^2$.  For notational simplicity, we will denote $S^c_\Phi(f)(s,\e)$ and $\overline{S}^c_\Phi(f)(s,\e)$ simply by $S(s,\e)$ and $\overline{S}(s,\e)$, respectively. By approximation, we can assume that $S(s,\e)$ and $\overline{S}(s,\e)$ are invertible for every $(s,\e)\in\real^{d+1}_+$. By \eqref{reproduce}, \eqref{Plancherel} and the Fubini theorem, we have
 \be\begin{split}
  \tau\int_{\real^d}f(s)g^*(s)ds
 &=\tau\int_{\real^{d+1}_+}\Phi_\e*f(s)\cdot \big(\Psi_\e*g(s)\big)^*\,\frac{ds d\e}{\e}\\
 &=\frac{2^d}{c_d}\,\tau\int_{\real^{d+1}_+}\int_{B(s, \frac\e2)}\Phi_\e*f(t)\cdot \big(\Psi_\e*g(t)\big)^*\,\frac{dtd\e}{\e^{d+1}}ds\\
 &=\frac{2^d}{c_d}\,\tau\int_{\real^{d+1}_+}\int_{B(s, \frac\e2)}\Phi_\e*f(t)S(s, \e)^{\frac{p-2}2}\cdot
 S(s, \e)^{\frac{2-p}2} \big(\Psi_\e*g(t)\big)^*\,\frac{dtd\e}{\e^{d+1}}ds,
 \end{split}\ee
where  $c_d$ is the volume of the unit ball of $\real^{d}$. Then by the Cauchy-Schwarz inequality,
  \be\begin{split}
  &\frac{c_d^2}{4^d}\,\Big|\tau \int _{\real^d} f(s)g^*(s)ds \Big|^2 \\
 &\les \tau\int_{\real^{d}}\int_0^\8 S(s,\e )^{p-2}
 \Big(\int_{B(s, \frac\e2)} |\Phi_\e*f (t)|^2 \,\frac{dt}{\e^{d+1}}\Big)
 d\e ds  \\
 & \,\;\cdot \tau\int_{\real^{d}}\int_0^\8 S(s,\e )^{2-p}
 \Big(\int_{B(s, \frac\e2)} |\Psi_\e*g (t)|^2 \,\frac{dt}{\e^{d+1}}\Big)
 d\e ds \\
 &\;{\mathop =^{\rm def}}\; {\rm A} \cdot {\rm B}.
  \end{split}\ee
To estimate the term A, using 
 $\overline{S}(s,\e)^2\le S(s,\e)^2$
and  $1\le p<2$, we get
  $$S(s,\e)^{\frac{p-2}2}\le\overline{S}(s,\e)^{\frac{p-2}2}.$$
 Therefore,
  \be\begin{split}
    {\rm A}
  &\le \tau\int_{\real^{d}}\int_0^\8 \overline{S}(s,\e )^{p-2} \Big(\int_{B(s, \frac\e2)} |\Phi_\e*f (t)|^2 \,\frac{dt}{\e^{d+1}}\Big) d\e ds \\
  &=-\tau\int_{\real^{d}}\int_0^\8 \overline{S}(s,\e )^{p-2} \frac{\partial}{\partial\e} \overline{S}(s,\e )^{2}d\e ds\\
 &=-2\tau\int_{\real^{d}}\int_0^\8 \overline{S}(s,\e )^{p-1} \frac{\partial}{\partial\e} \overline{S}(s,\e )d\e ds .
 \end{split}\ee
 Since $1\le p<2$ and $\overline{S}(s,\e )$ is decreasing in $\e$, $\overline{S}(s,\e )^{p-1}\le \overline{S}(s,0 )^{p-1}$.  On the other hand,  $- \frac{\partial}{\partial\e} \overline{S}(s,\e )\ge0$. Thus
  $$
  {\rm A}
  \le -2\tau\int_{\real^{d}}\overline{S}(s,0 )^{p-1}\int_0^\8 \frac{\partial}{\partial\e} \overline{S}(s,\e )d\e ds
  =2\tau\int_{\real^{d}}\overline{S}(s,0 )^{p}  ds\le 2\|S_\Phi^c(f)\|_{p}^p\,.
 $$

The estimate of B is  harder. For $j\in\ent$ we use the partition of $\real^d$ into dyadic cubes with side length $2^j$. Each such cube is of the form $Q_{m,j}=((m_1-1)2^{j},\, m_1 2^j] \times \cdots \times((m_d-1)2^{j}, \,m_d 2^j]$ with $m=(m_1,\cdots, m_d)\in\ent^d$. Let $c_{m,j}$ be its center. Define
 \beq\label{squareSS}
 \mathbb S(s, j)=\Big(\int_{\sqrt{d}\,2^j}^\8\int_{B(c_{m,j}, r)} |\Phi_r*f (t)|^2 \,\frac{dt dr}{r^{d+1}}\Big)^{\frac12}\;\;\textrm{ if }\;\;
 s\in Q_{m,j},
 \eeq
Since $B(s, r-\frac\e2)\subset B(c_{m,j}, r)$ whenever $s\in Q_{m,j}$ and $r\ge\e\ge \sqrt{d}\,2^j$,  we have
 $$S(s, \e)^2\le \mathbb S(s, j)^2 \;\textrm{ for }\;
 s\in Q_{m,j}\text{ and } \e\ge\sqrt{d}\,2^j.$$
Consequently,
 $$S(s, \e)^{2-p}\le \mathbb S(s, j)^{2-p}\;\textrm{ for }\;
 s\in Q_{m,j}\text{ and } \e\ge\sqrt{d}\,2^j.$$
Therefore,
  \be\begin{split}
  {\rm B}
  &=\tau\sum_m\sum_j\int_{Q_{m,j}}\int_{\sqrt{d}\,2^j}^{\sqrt{d}\,2^{j+1}}S(s,\e )^{2-p}
 \Big(\int_{B(s, \frac\e2)} |\Psi_\e*g (t)|^2 \,\frac{dt}{\e^{d+1}}\Big)d\e ds \\
 &\le \tau\sum_m\sum_j \int_{Q_{m,j}}\int_{\sqrt{d}\,2^j}^{\sqrt{d}\,2^{j+1}}\mathbb S(s, j)^{2-p}
 \Big(\int_{B(s, \frac\e2)} |\Psi_\e*g (t)|^2 \,\frac{dt}{\e^{d+1}}\Big)d\e ds \\
 &= \tau\int_{\real^d}\sum_j \mathbb S(s, j)^{2-p} \int_{\sqrt{d}\,2^j}^{\sqrt{d}\,2^{j+1}}
 \Big(\int_{B(s, \frac\e2)} |\Psi_\e*g (t)|^2 \,\frac{dt}{\e^{d+1}}\Big)d\e ds \\
 &= \tau\int_{\real^d}\sum_j\sum_{k\ge j} d(s, k) \int_{\sqrt{d}\,2^j}^{\sqrt{d}\,2^{j+1}}
 \Big(\int_{B(s, \frac\e2)} |\Psi_\e*g (t)|^2 \,\frac{dt}{\e^{d+1}}\Big)d\e ds ,\\
  \end{split}\ee
where $d(s, k)=\mathbb S(s, k)^{2-p} -\mathbb S(s, k+1)^{2-p}$. Since $\mathbb S(s, k)$ is decreasing in $k$ and $0<2-p\le1$, $d(s, k)\ge0$. On the other hand, $d(\cdot, k)$ is constant on $Q_{m, k}$ for all $m\in\ent^d$. Then we get
 \be\begin{split}
 {\rm B}
 &\le  \tau\int_{\real^d}\sum_{k} d(s, k)\sum_{j\le k} \int_{\sqrt{d}\,2^j}^{\sqrt{d}\,2^{j+1}}
 \Big(\int_{B(s, \frac\e2)} |\Psi_\e*g (t)|^2 \,\frac{dt}{\e^{d+1}}\Big)d\e ds \\
 &\le  \tau\sum_m\sum_{k} d(s, k) \int_{Q_{m, k}} \int_{0}^{\sqrt{d}\,2^{k+1}}
 \Big(\int_{B(s, \frac\e2)} |\Psi_\e*g (t)|^2 \,\frac{dt}{\e^{d+1}}\Big)d\e ds.
  \end{split}\ee
Since $g\in \BMO_q^c(\real^d,\M)$, Lemma~\ref{CarlsonlessBMOq} ensures the existence of a positive operator $a\in L_{\frac q2}(\N)$ such that $\|a\|_{\frac q2}\les \|g\|_{\BMO_q^c}^2$ and
 $$\frac1{|B|}\int_{T(B)} |\Psi_\e*g (t)|^2 \,\frac{dtd\e}{\e}\le a(s) \textrm{  for all  } s\in B
 \textrm{ and for all balls } B.$$
Let $B_{m, k}$ be the ball with center  $c_{m, k}$ and radius $\sqrt{d}\,2^{k+1}$.  Thus by the Fubini theorem,
 $$
 \int_{Q_{m, k}} \int_{0}^{\sqrt{d}\,2^{k+1}}
 \Big(\int_{B(s, \frac\e2)} |\Psi_\e*g (t)|^2 \,\frac{dt}{\e^{d+1}}\Big)d\e ds
 \les \int_{T(B_{m, k})} |\Psi_\e*g (t)|^2 \,\frac{dtd\e}{\e}
 \le \int_{Q_{m, k}}a(s)ds.
 $$
 Therefore, by the H\"older inequality,
 \be\begin{split}
 {\rm B}
 &\les  \tau\sum_m\sum_{k} d(s, k)\int_{Q_{m, k}}a(s)ds
 =\tau\sum_m\sum_{k} \int_{Q_{m, k}}d(s, k)a(s)ds\\
 &=\tau\int_{\real^d}\sum_{k} d(s, k)a(s)ds
 =\tau\int_{\real^d}\mathbb S(s, -\8)^{2-p} a(s)ds\\
 &=\tau\int_{\real^d}S^c_{\Phi}(f)(s)^{2-p} a(s)ds
 \le\|S^c_{\Phi}(f)\|_p^{2-p}\,\|a\|_{\frac q2}\\
 &\les \|S^c_{\Phi}(f)\|_p^{2-p}\,\|g\|_{\BMO_q^c}^2\,.
 \end{split}\ee
Combining the estimates of A and B, we finally get the desired inequality of the lemma.
 \end{proof}

We will need a  variant of the previous lemma. For any function $f$ defined on $\real^{d+1}_+$ with values in $ L_1(\M)+L_\8(\M)$, define (recalling that $\Ga=\{(t, \e)\in \real^{d+1}_+: |t|<\e\}$)
  \beq\label{T-squre}
  \mathcal{S}^c(f)(s)=\Big(\int_{\Gamma}|f(t+s,\e)|^2 \frac{dt d\e}{\e^{d+1}}\Big)^{\frac12} \,,\quad s\in\real^d.
  \eeq

\begin{lem}\label{T-dual}
 Let $1\le p<2$ and $q$ be its conjugate index. Then for any compactly supported function $f$ on $\real^{d+1}_+$ with values in $ L_1(\M)\cap L_\8(\M)$ and $g\in\BMO_q^c(\real^d,\M)$
 $$
 \Big|\tau\int_{\real^{d+1}_+}f(s,\e)\cdot \big(\Psi_\e*g(s)\big)^*\,\frac{ds d\e}{\e}\Big|\les \|\mathcal{S}^c(f)\|_p\,\|g\|_{\BMO_q^c}\,.
 $$
 \end{lem}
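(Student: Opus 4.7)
The plan is to follow the proof of Lemma~\ref{S-dual} almost verbatim, with $\Phi_\e*f(s)$ replaced throughout by the general function $f(s,\e)$ on $\real^{d+1}_+$. Since the left-hand side here is already expressed as an integral over $\real^{d+1}_+$, the initial step of invoking the reproducing formula \eqref{reproduce} is bypassed, and I start directly from the Fubini averaging identity
\beq\label{fubini-avg-Tdual}
\tau\int_{\real^{d+1}_+} f(s,\e)(\Psi_\e*g(s))^* \frac{ds\,d\e}{\e} = \frac{2^d}{c_d}\,\tau\int_{\real^{d+1}_+} \int_{B(s,\frac{\e}{2})} f(t,\e)(\Psi_\e*g(t))^* \frac{dt\,d\e}{\e^{d+1}}\,ds,
\eeq
which is valid for any integrable integrand since it only uses $|B(s,\e/2)|=c_d\e^d/2^d$ and Fubini.

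Next I introduce the truncated square functions tailored to $f$:
\beqn
\mathcal{S}(s,\e) = \Big(\int_\e^\8 \int_{B(s,r-\frac{\e}{2})} |f(u,r)|^2 \frac{du\,dr}{r^{d+1}}\Big)^{\frac12}, \quad \overline{\mathcal{S}}(s,\e) = \Big(\int_\e^\8 \int_{B(s,\frac{r}{2})} |f(u,r)|^2 \frac{du\,dr}{r^{d+1}}\Big)^{\frac12}.
\eeqn
These satisfy $\mathcal{S}(s,0)=\mathcal{S}^c(f)(s)$, $\overline{\mathcal{S}}(s,\e)^2\le \mathcal{S}(s,\e)^2$, $\overline{\mathcal{S}}(s,0)\le \mathcal{S}^c(f)(s)$, and $-\pa_\e \overline{\mathcal{S}}(s,\e)^2 = \e^{-d-1}\int_{B(s,\frac{\e}{2})}|f(t,\e)|^2\,dt$. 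Inserting the factor $\mathcal{S}(s,\e)^{(p-2)/2}\cdot \mathcal{S}(s,\e)^{(2-p)/2}$ inside the right-hand side of \eqref{fubini-avg-Tdual} and applying the Cauchy--Schwarz inequality \eqref{CS} bounds the modulus squared of the left-hand side by a product $\mathrm{A}\cdot \mathrm{B}$, where $\mathrm{A}$ and $\mathrm{B}$ have exactly the form appearing in the proof of Lemma~\ref{S-dual}, with $|\Phi_\e*f(t)|^2$ replaced by $|f(t,\e)|^2$.

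The term $\mathrm{A}$ is handled by the same monotonicity argument: using $\mathcal{S}(s,\e)^{p-2}\le\overline{\mathcal{S}}(s,\e)^{p-2}$ (since $p<2$), the Newton--Leibniz formula in $\e$, and the fact that $\overline{\mathcal{S}}(s,\cdot)$ is decreasing with $p\ge 1$, one gets $\mathrm{A}\le 2\|\mathcal{S}^c(f)\|_p^p$. The term $\mathrm{B}$ is handled by the dyadic argument from Lemma~\ref{S-dual}: define $\mathbb{S}(s,j)$ for $s\in Q_{m,j}$ by the analogue of \eqref{squareSS} with $|f(u,r)|^2$ in place of $|\Phi_r*f(u)|^2$; the containment $B(s,r-\frac\e2)\subset B(c_{m,j},r)$ for $s\in Q_{m,j}$ and $r\ge\e\ge\sqrt d\,2^j$ remains valid (it depends only on ball geometry), so $\mathcal{S}(s,\e)\le\mathbb{S}(s,j)$ on that range. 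The telescoping $d(s,k)=\mathbb{S}(s,k)^{2-p}-\mathbb{S}(s,k+1)^{2-p}\ge0$, which is piecewise constant on dyadic cubes of scale $2^k$, combined with the Carleson estimate for $g$ from Lemma~\ref{CarlsonlessBMOq}, yields $\mathrm{B}\les \|\mathcal{S}^c(f)\|_p^{2-p}\|g\|_{\BMO^c_q}^2$.

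The main (mild) point to verify is that every appeal to the convolution structure $\Phi_r*f$ in the proof of Lemma~\ref{S-dual} is either (i) the one-line invocation of the reproducing formula, which is now skipped, or (ii) a use of $|\Phi_r*f(t)|^2$ purely as a pointwise non-negative integrand, which transposes verbatim to $|f(t,r)|^2$. Since compact support of $f$ guarantees finiteness of all intermediate expressions and the invertibility of the square functions is harmless by approximation, the argument carries through without further change.
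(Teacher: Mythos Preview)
Your proposal is correct and follows exactly the approach indicated in the paper: the paper's proof of this lemma is the single sentence ``This proof is exactly the same as that of the previous lemma, just by replacing the function $(s,\e)\mapsto \Phi_\e*f(s)$ in that proof by $f$ in the present lemma,'' and you have carried out precisely this substitution, correctly observing that the reproducing formula step is bypassed and that every remaining appearance of $|\Phi_r*f(t)|^2$ in the proof of Lemma~\ref{S-dual} is used only as a pointwise nonnegative integrand.
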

 
\begin{proof}
 This proof is exactly the same as that of the previous lemma, just by replacing the function $(s,\e)\mapsto \Phi_\e*f(s)$ in that proof by $f$ in the present lemma.
\end{proof}

We will also need the radial version of Lemma~\ref{S-dual}. To this end, we have to control the radial square function by the conic one. For the classical  Littlewood-Paley $g$-function and Lusin area integral, this fact follows simply from the harmonicity of the Poisson integral.
Since the harmonicity is no longer available, the proof of our inequality is more elaborated.  Compared with \cite{Mei2007}, this is a new phenomenon which seems new even going back to the commutative case. We will use multi-index notation.  For $m=(m_1,\cdots, m_d)\in \mathbb{N}_0^d$ ($\nat_0$ being the set of nonnegative integers) and $s=(s_1,\cdots, s_d)\in \real^d$, we set $s^m=s_1^{m_1}\cdots s_d^{m_d}$. Let $|m|_1=m_1+\cdots +m_d$ and
 $$D^m=\frac{\partial^{m_1}}{\partial s_1^{m_1}}\cdots \frac{\partial^{m_d}}{\partial s_d^{m_d}}.$$

\begin{lem}\label{square1less2}
  Let $f\in L_1(\M; \mathrm{R}_d^c)+L_{\infty}(\M; \mathrm{R}_d^c)$. Then
 $$
 s^c_{\Phi}(f)(s)^2\les \sum_{|m|_1\leq d}S^c_{D^m \Phi}(f)(s)^2,\quad \forall s\in \real^d.
 $$
\end{lem}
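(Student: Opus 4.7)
The plan is to derive the inequality from an operator-valued Sobolev pointwise bound applied at each scale $\e>0$ and then integrate. Write $F(t,\e)=\Phi_\e*f(t)$ and note the scaling identity $D^m_t F(t,\e)=\e^{-|m|_1}(D^m\Phi)_\e*f(t)$ for every multi-index $m$. The change of variable $t\mapsto s+t$ converts cone integrals into ball integrals:
$$S^c_{D^m\Phi}(f)(s)^2=\int_\Gamma \big|(D^m\Phi)_\e*f(s+t)\big|^2\frac{dt\,d\e}{\e^{d+1}}=\int_0^\infty\e^{-d}\int_{B(s,\e)}\big|(D^m\Phi)_\e*f(t)\big|^2 dt\,\frac{d\e}{\e}.$$
Consequently, it suffices to prove the pointwise (in $\e$) estimate
$$|F(s,\e)|^2\les\sum_{|m|_1\le d}\e^{-d}\int_{B(s,\e)}\big|(D^m\Phi)_\e*f(t)\big|^2 dt,$$
and then integrate against $d\e/\e$.

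The pointwise estimate is the operator-valued version of the classical Sobolev bound: for any smooth $\M$-valued function $G$ on $B(s,\e)$,
$$|G(s)|^2\les\sum_{|m|_1\le d}\e^{2|m|_1-d}\int_{B(s,\e)}|D^m G(t)|^2 dt,$$
which reflects the scalar embedding $W^{d,2}(B(0,1))\hookrightarrow L^\infty(B(0,1))$ (available since $d>d/2$). Applied with $G(\cdot)=F(\cdot,\e)$, the scaling identity $D^m F(\cdot,\e)=\e^{-|m|_1}(D^m\Phi)_\e*f$ turns each factor $\e^{2|m|_1-d}$ into $\e^{-d}$, producing the required bound.

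To justify the operator version of the Sobolev estimate, I would rescale to $B(0,1)$ and apply the Riesz representation theorem to the bounded linear functional $G\mapsto G(0)$ on the Hilbert space $W^{d,2}(B(0,1))$: this yields scalar functions $\phi_m\in L^2(B(0,1))$, independent of $G$, such that
$$G(0)=\sum_{|m|_1\le d}\int_{B(0,1)}\phi_m(t)\,D^m G(t)\,dt.$$
Since the $\phi_m$ are scalar, this identity extends verbatim to $\M$-valued $G$ (apply it to $\la v,G(\cdot)\xi\ra$ for $v,\xi$ in the Hilbert space on which $\M$ acts). The operator-valued Cauchy-Schwarz $\big|\int\phi(t)\,g(t)\,dt\big|^2\le\|\phi\|_{L^2}^2\int|g(t)|^2\,dt$, valid for scalar $\phi$ and operator-valued $g$, then converts each summand of the representation into an operator inequality; scaling from $B(0,1)$ back to $B(s,\e)$ reintroduces the powers of $\e$.

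The main obstacle is precisely this passage to the operator setting. Mei's argument for the Poisson kernel uses the subharmonicity $|\mathrm{P}_\e*f|^2\le\frac{1}{|B|}\int_B|\mathrm{P}_\e*f|^2$, which comes from the harmonicity of $\mathrm{P}_\e*f$ on $\real^{d+1}_+$; for a general $\Phi$ there is no such harmonicity, and we must compensate by allowing up to $d$ extra derivatives on $\Phi$. The scalar-valued nature of the Sobolev coefficients $\phi_m$ is what permits the operator Cauchy-Schwarz step and is the key reason the argument works at the noncommutative level.
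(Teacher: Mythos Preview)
Your proof is correct and takes a genuinely different route from the paper. The paper proceeds by iterated integration by parts along the radial segment: for $t$ in the cone it writes $|\Phi_\e*f(t)|^2-|\Phi_\e*f(0)|^2$ as a Taylor expansion in $r$ of $|\Phi_\e*f(rt)|^2$ with integral remainder of order $d$, controls each resulting cross term via the selfadjoint operator inequality $ab+ba\le a^2+b^2$ (after first reducing, without loss of generality, to selfadjoint $f$ and real-valued $\Phi$), and finally integrates over $\Gamma$ against $dt\,d\e/\e^{d+1}$.

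Your argument instead isolates the Sobolev embedding $W^{d,2}(B)\hookrightarrow L^\infty(B)$ as the mechanism, realizes the evaluation functional via Riesz representation with \emph{scalar} kernels $\phi_m$, and then passes to operator-valued functions through the Cauchy-Schwarz inequality \eqref{CS} (together with the trivial $|\sum_m a_m|^2\le N\sum_m|a_m|^2$, itself a case of \eqref{CS}). This is shorter and more conceptual, and it avoids the reduction to selfadjoint $f$ and real $\Phi$. The paper's bare-hands Taylor expansion has the minor advantage of being fully explicit and self-contained, making the role of the order $d$ visible through the remainder. Both approaches extend to the truncated square functions needed in Lemma~\ref{s-dual}: in your framework, for $r\ge\e$ one simply applies the Sobolev bound on the ball $B(s,r/2)\subset B(s,r-\tfrac\e2)$, whose radius is comparable to $r$.
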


\begin{proof}
Without loss of generality, we assume that  $f$ is selfadjoint and $\Phi$ is real-valued. Fix a point $s_0$, say $s_0=0$. For any $t \in \Gamma$, successive  applications of integration by parts yield (with $\partial_r=\frac{\partial}{\partial_r}$)
 \be\begin{split}
&|\Phi_\e*f(t)|^2 - |\Phi_\e*f(0)|^2 = \int _{0}^1 (r)'\partial_r( |\Phi_\e *f(rt )|^2)dr\\
&= \partial_r (|(\Phi_\e)*f(rt)|^2)\bigg|_{r=1} -  \int _{0}^1 r\partial^2_r( |(\Phi_\e)*f(r t )|^2) dr \\
& ~~~~\vdots\\
&=\sum_{j=1}^k \frac{(-1)^{j-1}}{j!} \partial^j_r (|\Phi_\e *f(rt)|^2)\bigg|_{r=1}+ \frac{(-1)^k}{k!} \int _{0}^1 r^k \partial^{k+1}_r( |\Phi_\e *f(rt)|^2) dr.
\end{split}\ee
For each derivative of order less than or equal to $k$ on  the right-hand side, we have
\be\begin{split}
\partial^j_r (|(\Phi_\e)*f(rt)|^2)
& = \sum_{i=0}^j  \begin{pmatrix} j \\ i\end{pmatrix} \partial^i_r [\Phi_\e*f(rt)]\cdot \partial^{j-i}_r [\Phi_\e*f(rt)]\\
&= \sum_{i=0}^j  \begin{pmatrix} j \\ i\end{pmatrix} \sum_{|m|_1=i} \frac{t^m}{\e^{|m|_1}}\, (D^m \Phi)_\e*f(rt)
\cdot \sum_{|n|_1=j-i}\frac{t^n}{\e^{|n|_1}}\,  (D^n\Phi)_\e*f(rt).
\end{split}\ee
Since $|t^m|\leq |t|^{|m|_1}\leq \e^{|m|_1}$ whenever $|t|\le\e$, using the inequality $ab+ba\le a^2+b^2$ for selfadjoint operators $a$ and $b$, we get
\be
 \partial^j_r (|(\Phi_\e)*f(r t)|^2)\bigg|_{r=1}
 \leq  \sum_{i=0}^j  \begin{pmatrix} j \\ i\end{pmatrix} \sum_{|m|_1=i}\, \sum_{|n|_1=j-i} \frac{1}{2}
  \big[\big|(D^m \Phi)_\e*f(t)\big|^2+ \big|(D^n\Phi)_\e*f(t)\big|^2\big].
\ee
On the other hand, to deal with the last derivative of order $k+1$, we use the following similar estimate:
 \be\begin{split}
 &\partial^{k+1}_r (|(\Phi_\e)*f(r t)|^2)\\
 &\leq \frac{|t|^{k+1}}{\e^{k+1}} \sum_{i=0}^{k+1}  \begin{pmatrix} k+1 \\ i\end{pmatrix} \sum_{|m|_1=i}\, \sum_{|n|_1=k+1-i} \frac{1}{2}
  \big[\big|(D^m \Phi)_\e*f(rt)\big|^2+ \big|(D^n\Phi)_\e*f(rt)\big|^2\big].
 \end{split}\ee
Thus for $|t|\leq \e$, we have
\be\begin{split}
&\int _{0}^1 r^k \partial^{k+1}_r ( |\Phi_\e *f(rt )|^2) dr  \\
&\leq \int _{0}^{\e }\frac{r^{k}}{\e^{k+1}} \sum_{i=0}^{k+1}  \begin{pmatrix} k+1 \\ i\end{pmatrix}
 \sum_{|m|_1=i} \sum_{|n|_1=k+1-i} \frac{1}{2}
 \big[\big|(D^m \Phi)_\e*f(\frac{rt}{|t|})\big|^2+ \big|(D^n\Phi)_\e*f(\frac{rt}{|t|})\big|^2\big]\,dr.
 \end{split}\ee
Letting $k=d-1$ and combining the previous inequalities, we get
\be\begin{split}
|\Phi_\e *f(0)|^2
\les  \sum_{|m|_1<d} \big|(D^m \Phi)_\e *f(t )\big|^2 +\sum_{|m|_1\le d} \int _{0}^{\e }\frac{r^{d-1}}{\e^{d}}
  \sum_{|m|_1\le d}\big|(D^m \Phi)_\e*f(\frac{rt}{|t|})\big|^2\,dr.
\end{split}\ee
Now divide by $\varepsilon^{d+1}$ both sides of the above inequality, then take  integration in $(t,\e)$ on $\Gamma$. The result for the left hand side is
 \be\begin{split}
 \int_{\Gamma}|f*\Phi_\e (0)|^2\frac{dtd\e }{\e ^{d+1}}= c_d\int_0^\8  |f*\Phi_\e (0)|^2 \frac{d\e }{\e }
 =c_d\,s_\Phi^c(f)(0)^2.
 \end{split}\ee
The one for the first sum on the right hand side is  equal to the sum of  $\big(S^c_{D^m \Phi}( f)(0) \big)^2$ for all multi-indices $m$ with $|m|_1<d$. As far as for the second sum,  an easy calculation yields
 \be\begin{split}
\int_{\Gamma}\int _{0}^{\e }\frac{r^{d-1}}{\e ^{d}} \big|(D^m \Phi)_\e *f(\frac{rt}{|t|})\big|^2dr\, \frac{dtd\e }{\e ^{d+1}}
=\frac{1}{d}\int_{\Gamma}\big| (D^m \Phi)_\e *f(t)\big|^2  \frac{dtd\e }{\e ^{d+1}}
=\frac{1}{d}\,  S^c_{D^m \Phi}( f)(0) ^2.
\end{split}\ee
Therefore, we have proved the announced assertion.
\end{proof}

 \begin{lem}\label{s-dual}
 Keep the assumption of Lemma~\ref{S-dual}. Then
 $$
 \Big|\tau \int _{\real^d} f(s)g^*(s)ds \Big|\les \|s^c_\Phi(f)\|_p^{\frac p 2}\,\|f\|_{\H^c_p}^{1-\frac p2}\,\|g\|_{\BMO_q^c}\,.
 $$
 \end{lem}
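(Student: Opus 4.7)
The plan is to adapt the Cauchy--Schwarz argument from the proof of Lemma~\ref{S-dual}, replacing the conic truncated square function $\overline{S}(s,\e)$ by its radial counterpart
$$\sigma(s,\e) := \widetilde{s}^c_\Phi(f)(s,\e) = \Bigl(\int_\e^\8 |\Phi_r\ast f(s)|^2\,\frac{dr}{r}\Bigr)^{1/2},$$
which is decreasing in $\e$ with $\sigma(s,0) = s^c_\Phi(f)(s)$ and $\sigma(s,+\8) = 0$, and satisfies $-\partial_\e \sigma(s,\e)^2 = |\Phi_\e\ast f(s)|^2/\e$. After the standard regularization ensuring $\sigma$ is invertible, I start from the reproducing identity \eqref{reproduce}, insert the factors $\sigma^{(p-2)/2}\,\sigma^{(2-p)/2}$ between $\Phi_\e\ast f(s)$ and $(\Psi_\e\ast g(s))^*$, and apply the tracial Cauchy--Schwarz inequality to get $|\tau\int fg^*\,ds|^2 \le A'\cdot B'$ with
$$A' = \tau\!\int \sigma(s,\e)^{p-2}\,|\Phi_\e\ast f(s)|^2\,\frac{ds\,d\e}{\e},\qquad B' = \tau\!\int \sigma(s,\e)^{2-p}\,|\Psi_\e\ast g(s)|^2\,\frac{ds\,d\e}{\e}.$$

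The first term is handled in the spirit of the $A$-estimate in the proof of Lemma~\ref{S-dual}: the identity $-\partial_\e\sigma^2 = |\Phi_\e\ast f|^2/\e$ together with the chain rule $\sigma^{p-2}\partial_\e\sigma^2 = \tfrac{2}{p}\,\partial_\e\sigma^p$ (valid for positive operator-valued one-parameter families by functional calculus) and a single integration by parts in $\e$ yield $A' = \tfrac{2}{p}\,\|s^c_\Phi(f)\|_p^{\,p}$. This is the payoff of using the radial truncation: the $f$-side of the Cauchy--Schwarz collapses directly to the radial Lusin norm rather than the conic one.

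The delicate step is $B' \lesssim \|f\|_{\H^c_p}^{2-p}\,\|g\|_{\BMO^c_q}^{2}$. Operator monotonicity of $x\mapsto x^{2-p}$ on $[0,\8)$ (valid for $2-p\in[0,1]$) combined with $\sigma(s,\e)^2 \le \sigma(s,0)^2 = s^c_\Phi(f)(s)^2$ gives $\sigma^{2-p} \le s^c_\Phi(f)^{2-p}$, hence
$$B' \le \tau\!\int s^c_\Phi(f)(s)^{2-p}\,|\Psi_\e\ast g(s)|^2\,\frac{ds\,d\e}{\e}.$$
A naive H\"older inequality here produces $\|s^c_\Psi(g)\|_q^{2}$, which is in general infinite for $g\in\BMO^c_q$ (take $g(s)=\log|s|$ at $q=\infty$); the $\e$-dependence of the measure $|\Psi_\e\ast g|^2\,ds\,d\e/\e$ must therefore be retained. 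The plan is to mirror the dyadic decomposition of the proof of Lemma~\ref{S-dual}: partition the upper half-space into slabs $\real^d\times(\sqrt d\,2^j,\sqrt d\,2^{j+1}]$ further subdivided by dyadic cubes $Q_{m,j}$ of side $2^j$; replace $\sigma(s,\e)^{2-p}$ on each block by a local upper bound constant in $s\in Q_{m,j}$; and control the local integrals of $|\Psi_\e\ast g|^2$ via the Carleson measure estimate of Lemma~\ref{CarlsonlessBMOq} on a ball $B_{m,j}\supset Q_{m,j}$ of comparable volume. Telescoping in $j$ and applying tracial H\"older with the conjugate exponents $p/(2-p)$ and $q/2$ then gives $B' \lesssim \|S^c_\Phi(f)\|_p^{2-p}\|g\|_{\BMO^c_q}^{2} \le \|f\|_{\H^c_p}^{2-p}\|g\|_{\BMO^c_q}^{2}$ via Lemma~\ref{SboundedH1toL1}. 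Combining this with the $A'$-estimate and taking square roots yields the lemma.

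The main obstacle is the construction of the local upper bound for $\sigma$ on each dyadic block: unlike $\overline{S}(s,\e)$, which extends to a constant-in-$s$ upper bound simply by enlarging the cone, the radial weight depends on point values $\Phi_r\ast f(s)$, and controlling it uniformly on $Q_{m,j}$ requires exploiting that $\Phi_r\ast f$ varies slowly at scale $r\ge\e$. A natural route is to apply a truncated variant of Lemma~\ref{square1less2} bounding $\sigma(s,\e)^2$ pointwise by a finite sum of truncated conic square functions associated to the derivatives $D^m\Phi$, and then run the dyadic/Carleson analysis of Lemma~\ref{S-dual} separately for each $D^m\Phi$.
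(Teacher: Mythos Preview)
Your proposal is correct and matches the paper's proof: the paper also applies Cauchy--Schwarz with the truncated radial square function, estimates $A'$ exactly as the $A$-term of Lemma~\ref{S-dual}, and for $B'$ invokes precisely the truncated version of Lemma~\ref{square1less2} (namely $s^c_\Phi(f)(s,\e)^2\lesssim\sum_{|m|_1\le d}S^c_{D^m\Phi}(f)(s,\e)^2$) before running the dyadic Carleson argument of Lemma~\ref{S-dual} separately for each $D^m\Phi$ and concluding via Lemma~\ref{SboundedH1toL1}. One caution on your $A'$-estimate: the chain rule $\sigma^{p-2}\partial_\e\sigma^2=\tfrac{2}{p}\partial_\e\sigma^p$ is \emph{not} a valid pointwise operator identity when $\sigma(\e)$ and $\partial_\e\sigma(\e)$ fail to commute --- it holds only after applying the trace (via $\partial_\e\tau(h(A))=\tau(h'(A)\partial_\e A)$), which is fortunately all your argument needs.
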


\begin{proof}
 This proof is similar to that of Lemma~\ref{S-dual}. Now consider the truncated version of $s^c_\Phi(f)$:
 $$
 s^c_\Phi(s,\e) =\Big(\int_\e^\8 |\Phi_r*f (s)|^2\frac{d r}{r}\Big)^{\frac12}\,.
 $$
As in the previous proof, we have
  \be\begin{split}
  &\Big|\tau \int _{\real^d} f(s)g^*(s)ds \Big|^2 \\
 &\le \tau\int_{\real^{d}}\int_0^\8 s^c_\Phi(s,\e )^{p-2}|\Phi_\e*f (s)|^2 \,\frac{ d\e ds}{\e}
 \cdot \tau\int_{\real^{d}}\int_0^\8 s^c_\Phi(s,\e )^{2-p}|\Psi_\e*g (s)|^2 \,\frac{ d\e ds}{\e}\\
 &\;{\mathop = ^{\rm def}}\;{\rm A}' \cdot {\rm B}'.
  \end{split}\ee
The term ${\rm A}' $ is estimated exactly as before,  so
  $
  {\rm A}'
  \le 2\|s_\Phi^c(f)\|_{p}^p\,.
 $

To estimate ${\rm B}'$, we note that the proof of Lemma~\ref{square1less2} also gives
 $$
  s^c_{\Phi}(f)(s,\e)^{2}\les \sum_{|m|_1\le d}S^c_{D^m \Phi}(f)(s, \e)^2,
  $$
where $S^c_{D^m \Phi}(f)(s, \e)$ is the truncation of $S^c_{D^m \Phi}(f)(s)$ as defined in \eqref{truncated S} with $D^m \Phi$ instead of $\Phi$.
Then
 $${\rm B}'\les \sum_{|m|_1\le d} \tau\int_{\real^{d}}\int_0^\8 S^c_{D^m \Phi}(f)(s,\e )^{2-p}|\Psi_\e*g (s)|^2 \,\frac{ds d\e}{\e}\,.$$
All terms on $S^c_{D^m \Phi}$ are handled in the same way, so it suffices to consider $S^c_{\Phi}$  (i.e., without derivation). Starting from this point, the reasoning becomes the same as for B before, except that in the final step, we invoke lemma~\ref{SboundedH1toL1}. Thus we conclude that
 $${\rm B}' \les\sum_{|m|_1\le d} \|S^c_{D^m \Phi}(f)\|_p^{2-p}\,\|g\|_{\BMO_q^c}^2 \les \|f\|_{\H^c_p}^{2-p}\,\|g\|_{\BMO_q^c}^2\,.$$
This finishes the proof of the lemma.
 \end{proof}

Another lemma will be needed for  the proof of Theorem \ref{equivalence Hp}. Recall that for $f:\real^{d+1}_+\to L_1(\M)+L_\8(\M)$, the square function $\mathcal{S}^c(f)$ is defined by \eqref{T-squre}. Now define
 $$T_p^c=\big\{f: \mathcal{S}^c(f)\in L_p(\N)\big\}, \;\text{ equipped with } \; \|f\|_{T^c_p}=\|\mathcal{S}^c(f)\|_p\,.$$
This is the column tent space already considered in \cite{Mei2007}.  $T^c_p$ is viewed as a subspace of the column space $L_p(\N;L_2^c(\Gamma))$ by the injection $f\mapsto\wt f$, where $\wt f(s, t, \e)=f(s+t,\e)$.  Note that the elements of  $L_p(\N;L_2^c(\Gamma))$ are considered as functions of three variables $(s, t,\e)$ with $s\in\real^d$ and $(t,\e)\in\Gamma$.
Then it is easy to show that the orthogonal projection from $L_2(\N;L_2^c(\Gamma))$ onto $T^c_2$ is given by the following
 $$P(F)(s,\e)=\frac{1}{|B(0,\e)|}\int_{B(0,\e)}F(s-u,u,\e)du.$$

\begin{lem}\label{tent-complemented}
The above projection $P$ extends to a bounded projection from $L_p(\N;L_2^c(\Gamma))$  onto $T^c_p$ for any $1<p<\8$.
\end{lem}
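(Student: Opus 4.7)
The plan is to reduce the claim to Mei's noncommutative Hardy--Littlewood maximal inequality in the regime $p>2$, and then to invoke duality for $1<p<2$; the case $p=2$ is handled by the orthogonal projection itself.

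The projection property $P\circ P=P$ (upon identifying $T^c_p$ with its image under $f\mapsto\wt f$) follows from a one-line computation: for any $F$,
\beqn
P(\wt{P(F)})(s,\e)=\frac1{|B(0,\e)|}\int_{B(0,\e)}P(F)(s-u+u,\e)\,du=P(F)(s,\e).
\eeqn
For $2<p<\8$, I would start from the operator-valued Cauchy--Schwarz inequality
\beqn
|P(F)(s+t,\e)|^2\le\frac1{|B(0,\e)|}\int_{B(0,\e)}|F(s+t-u,u,\e)|^2\,du,
\eeqn
integrate over $(t,\e)\in\Ga$, substitute $y=s+t-u$, and use the inclusion $B(s-u,\e)\subset B(s,2\e)$ (valid for $|u|<\e$) to obtain the pointwise operator estimate
\beqn
\mathcal{S}^c(P(F))(s)^2\les \int_0^\8 M_{2\e}(\phi(\cdot,\e))(s)\,\frac{d\e}{\e^{d+1}},
\eeqn
where $\phi(y,\e)=\int_{B(0,\e)}|F(y,u,\e)|^2\,du$ and $M_r$ denotes the radius-$r$ ball-average operator on $\N$. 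To reach the $L_{p/2}(\N)$-norm, I would pair against an arbitrary positive $h\in L_{(p/2)'}(\N)$: the trace-symmetry $\tau_\N(M_r(\phi)\,h)=\tau_\N(\phi\, M_r(h))$ reduces matters to bounding $\int_0^\8\tau_\N(\phi(\cdot,\e)\,M_{2\e}(h))\,d\e/\e^{d+1}$. At this point Mei's noncommutative Hardy--Littlewood maximal inequality \cite{Mei2007} would supply a positive $a\in L_{(p/2)'}(\N)$ with $M_{2\e}(h)\le a$ for every $\e>0$ and $\|a\|_{(p/2)'}\les\|h\|_{(p/2)'}$; combined with $\phi\ge0$ and the identity $\int_0^\8\phi(\cdot,\e)\,d\e/\e^{d+1}=|F|_{L_2^c(\Ga)}^2$, a single application of H\"older's inequality would close the argument.

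For $1<p<2$ I would argue by duality. Since $F\mapsto F^*$ is an isometric bijection between $L_p(\N;L_2^c(\Ga))$ and $L_p(\N;L_2^r(\Ga))$ commuting with $P$, the preceding step also yields the $L_{p'}(\N;L_2^r(\Ga))$-boundedness of $P$ for $p'>2$. Combining this with the $L_2$-self-adjointness of $P$ and the row-column duality $(L_p(\N;L_2^c(\Ga)))^*=L_{p'}(\N;L_2^r(\Ga))$ would transfer the bound to $L_p(\N;L_2^c(\Ga))$. The main obstacle is the $p>2$ step: the absence of a pointwise noncommutative maximal function forces us to rely on the $L_q(\N;\ell_\8)$-formulation of Mei's maximal inequality and to preserve the column structure carefully through both the Cauchy--Schwarz estimate and the final duality argument.
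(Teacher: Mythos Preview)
Your proposal is correct and follows essentially the same route as the paper: the operator-valued Cauchy--Schwarz inequality, the change of variables enlarging $B(s-u,\e)$ to $B(s,2\e)$, Mei's noncommutative Hardy--Littlewood maximal inequality applied to the dualizing element in $L_{(p/2)'}(\N)$, and a final H\"older step. The only organizational difference is that you first record the pointwise operator inequality $\mathcal{S}^c(P(F))(s)^2\les\int_0^\8 M_{2\e}(\phi(\cdot,\e))(s)\,\e^{-d-1}d\e$ and then pair with $h$, invoking the self-adjointness of $M_r$, whereas the paper carries the dualizing $g$ through the computation from the outset; these are equivalent. You are also more explicit about the range $1<p<2$: the paper simply writes ``We need only to consider the case $p>2$'', leaving the duality implicit, while you spell out that $P$ commutes with $F\mapsto F^*$ and that the adjoint of $P$ under the pairing $(L_p(\N;L_2^c(\Ga)))^*=L_{p'}(\N;L_2^r(\Ga))$ is again $P$.
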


\begin{proof}
 We need only to consider the case $p>2$. Fix $F\in L_p(\N; L_2^c(\Gamma))$.  Denote $r$ the conjugate number of $\frac p2$, and choose a function $g\in L_r(\N)$ with norm one such that
 $$\|P(F)\|^2_{T^c_p}=\tau\int_{\real^d}\mathcal{S}^c(P(F))(s)^2g(s)ds=\tau\int_{\real^d}\int_0^\8\int_{B(0,\e)}|P(F)(s+t,\e)|^2\frac{dtd\e}{\e^{d+1}}\,g(s)ds.$$
Then by \eqref{CS}, two changes of variables  and  the Fubini theorem, we get
\be\begin{split}
 \|P(F)\|^2_{T^c_p}
 &\leq \tau  \int_{\real^d}\int_0 ^\8\int_{|t|<\e} \frac{1}{|B(0,\e)|} \int_{|u|<\e} | F(s+t-u,u,\e)|^2du\, \frac{dtd\e}{\e^{d+1}} \,|g(s)|\,ds \\
 &=\tau \int_{\real^d}\int_0 ^\8  \frac{1}{|B(0,\e)|} \int_{|s+t-u|<\e}\int_{|t|<\e}| F(u,t,\e)|^2du\, \frac{dtd\e}{\e^{d+1}} \,|g(s)|\,ds \\
 &\leq \tau \int_{\real^d} \int_0^\8  \int_{|t|<\e}| F(u,t,\e)|^2 \,\frac{dtd\e}{\e^{d+1}} \,\frac{1}{|B(0,\e)|} \int_{|s-u|<2\e} |g(s)| \,ds \,du\\
 &=2^d \tau \int_{\real^d} \int_{\Gamma}| F(u,t,\e)|^2 \,\frac{dtd\e}{\e^{d+1}} \,\frac{1}{|B(0,2\e)|} \int_{|s-u|<2\e} |g(s)| \,ds \,du.
 \end{split}\ee
By Mei's noncommutative Hardy-Littlewood maximal inequality (see \cite[Theorem 3.3]{Mei2007}), we find a positive operator $a\in L_r(\N)$ such that 
 $$\|a\|_{ r}\les\|g\|_{ r}\les 1\;\text{ and }\;\frac{1}{|B(0,2\e)|} \int_{|s-u|<2\e} |g(s)| \,ds\leq a(u),\;\forall u\in\real^d,\, \forall \e>0.$$
Thus by the H\"older inequality,
 $$
 \|P(F)\|^2_{T^c_p}\le 2^d \tau \int_{\real^d} \int_{\Gamma}| F(u,t,\e)|^2 \,\frac{dtd\e}{\e^{d+1}} \,a(u)  \,du\les\|F\|_{L_p(\N;L_2^c(\Gamma))}^2\,,
 $$
which finishes the proof of the lemma.
 \end{proof}

 \begin{rk}
  The previous lemma shows the duality equality $\big(T^c_p\big)^*=T^c_q$ for any $1<p<\8$ ($q$ being conjugate to $p$). This result was already observed in \cite{Mei2007}(see the remark following Theorem~4.7 there).
 \end{rk}
 
We are now ready to prove Theorem~\ref{equivalence Hp}.

\begin{proof}[Proof of Theorem~\ref{equivalence Hp}]
 The case $p=2$ is trivial. Consider now the case $1\le p<2$. Both majorations are contained in Lemma~\ref{SboundedH1toL1}. On the other hand, taking the supremum on the left hand side of the inequality in Lemma~\ref{S-dual} over $g$ in the unit ball of $\BMO^c_q(\real^d,\M)$ and involving Mei's duality theorem (see \cite[Theorem~4.4]{Mei2007}), we get
  $$\|f\|_{\H_p^c}\les \|S_{\Phi}^c(f)\|_{p}$$
for $f\in\mathcal{H}^c_p(\real^d,\M)\cap L_2(L_\8(\real^d)\overline\ot\M)$. Then a density argument shows that the same inequality also holds for all $f\in\mathcal{H}^c_p(\real^d,\M)$. The inequality $\|f\|_{\H_p^c}\les\|s_{\Phi}^c(f)\|_{L_p}$ is proved in the same way by virtue of Lemma~\ref{s-dual}.

 Pass to the case $2<p<\8$.  Let $q$ be the conjugate index of $p$. Let $f\in\mathcal{H}^c_p(\real^d,\M)$ and choose $g\in \mathcal{H}^c_q(\real^d,\M)$ such that $\|g\|_{\H_q^c}=1$ and
  $$\|f\|_{\H_p^c}\approx \tau\int_{\real^d}f(s)g(s)^*ds
  =\tau\int_{\real^{d+1}_+}\Phi_\e*f(s)\cdot \big(\Psi_\e*g(s)\big)^*\,\frac{ds d\e}{\e}.$$
 Then by  the H\"older inequality and Lemma~\ref{SboundedH1toL1} (applied to $g$, $\Psi$ and $q$),
 $$
 \|f\|_{\H_p^c}\les\|s_\Phi^c(f)\|_p\,\|s_\Psi^c(g)\|_q
 \les\|s_\Phi^c(f)\|_p\,\|g\|_{\H_q^c}\le \|s_\Phi^c(f)\|_p\,.$$
Similarly,
  $$ \|f\|_{\H_p^c}\les \|S_\Phi^c(f)\|_p\,.$$

It remains to show the two reverse inequalities. It suffices to show the reverse inequality for the conic square function since the one for the radial square function will then follow from Lemma~\ref{square1less2}.
   Let $f\in\mathcal{H}^c_p(\real^d,\M)$ and choose $G\in L_q(\N;L_2^c(\Gamma))$ with norm one such that
 \be\begin{split}
  \|S^c_\Phi(f)\|_{p}
  &=\tau\int_{\real^d}\int_{\Gamma} \Phi_\e*f(s+t,\e) G(s, t,\e)^* \,\frac{dtd\e}{\e^{d+1}}\,ds\\
  &=\tau\int_{\real^d}\int_0^\8 \Phi_\e*f(s,\e) g(s,\e)^* \,\frac{dsd\e}{\e}\,,
   \end{split}\ee
  where $g=P(G)$. Now by Lemma~\ref{T-dual} with $f$ and $g$ exchanged (as well as $p$ and $q$), we deduce that
   $$\|S^c_\Phi(f)\|_{p}\les \|g\|_{T^c_q}\|f\|_{\BMO^c_p}\les \|G\|_{L_q(\N;L_2^c(\Gamma))}\|f\|_{\BMO^c_p}\les \|f\|_{\H_p^c}\,,$$
where we have used  Lemma~\ref{tent-complemented} and the equality $\BMO^c_p(\real^d,\M)=\mathcal{H}^c_p(\real^d,\M)$ for $2<p<\8$ (see \cite[Theorem~4.7]{Mei2007}). Therefore, the proof of the theorem is complete.
 \end{proof}

\begin{proof}[Proof of Theorem~\ref{CarlsonlessBMOqbis}]
 Reexamining the proof of Lemma~\ref{S-dual}, we realize that $\|g\|_{\BMO_q^c}$ in the inequality there can be replaced by the $q$-Carleson measure norm of $g$ associated to $\Psi$. Namely, we have
  \be\begin{split}
 \Big|\tau \int _{\real^d} f(s)g^*(s)ds \Big|
 &\les \|S^c_\Phi(f)\|_p\,
 \Big\|\mathop{{\sup}^+}_{s\in B\subset \real^d} \frac{1}{|B|}\int_{T(B)}|\Psi_\e*f (t)|^2\frac{dt d\e}{\e} \Big\|^{\frac 1 2}_{L_{\frac{q}{2}}(\N)}\\
 &\les \|f\|_{\H^c_p}\,
 \Big\|\mathop{{\sup}^+}_{s\in B\subset \real^d} \frac{1}{|B|}\int_{T(B)}|\Psi_\e*f (t)|^2\frac{dt d\e}{\e} \Big\|^{\frac 1 2}_{L_{\frac{q}{2}}(\N)}\,.
 \end{split}\ee
Now taking the supremum over $f$ in the unit ball of $\H_p^c(\real^d,\M)$ yields
 $$\|f\|_{\BMO_q^c}\les \Big\|\mathop{{\sup}^+}_{s\in B\subset \real^d} \frac{1}{|B|}\int_{T(B)}|\Psi_\e*f (t)|^2\frac{dt d\e}{\e} \Big\|^{\frac 1 2}_{L_{\frac{q}{2}}(\N)}\,.$$
 This is the desired reverse inequality (with $\Psi$ instead of $\Phi$).
  \end{proof}

We conclude this section with a result in the spirit of Lemma~\ref{tent-complemented} which is of independent interest.
Let $H=L_2((0,\8),\frac{d\e}{\e})$ and consider the space $L_p(\N; H^c)$. The elements of the latter space are viewed as functions defined on $\real^{d+1}_+$ with values in $L_p(\M)$. Let $\mathcal{T}_p$ be its closed subspace spanned by all functions of the form  $(s, \e)\mapsto \Phi_\e*f(s)$ with $f\in L_p(\N)$ such that $s^c_\Phi(f)\in L_p(\N)$. We now calculate the orthogonal projection $\mathsf T$ from $L_2(\N; H^c)$ onto $\mathcal{T}_2$. Let $F\in L_2(\N; H^c)$ and $g\in L_2(\N)$. Then (with $\wt\Phi(s)=\overline{\Phi(-s)}$)
  $$
  \la \mathsf T(F), \, \Phi_{\cdot}*g\ra
  =\tau\int_{\real^{d+1}_+}F(s, \e)(\Phi_\e*g(s))^*\,\frac{ds d\e}\e
  =\tau\int_{\real^{d}}f(s)\, g(s)^*ds,
 $$
where
 $$f(s)=\int_0^\8\wt\Phi_\e*F(\cdot, \e)(s)\,\frac{d\e}\e\,.$$
Let $\mathsf k_\Phi^c$ be the operator introduced before Lemma~\ref{SboundedH1toL1} and $(\mathsf k_\Phi^c)'$ its adjoint.  Then clearly,  $f=(\mathsf k_\Phi^c)'(F)$. On the other hand,
 $$\la \mathsf T(F), \, \Phi_{\cdot}*g\ra
 =\tau\int_{\real^{d+1}_+}\Psi_\e*f(s)\,(\Phi_\e*g(s))^*\,\frac{ds d\e}\e
 =\la \mathsf k_\Psi^c(f), \, \mathsf k_\Phi^c(g)\ra.$$
Note that $\mathsf k_\Psi^c(f)$ belongs to $\mathcal{T}_2$ since by the choice of $\Psi$ just after  \eqref{reproduce}: $\mathsf k_\Psi^c(f)=\mathsf k_\Phi^c(\zeta*f)$ with $\zeta$ given by $\wh\zeta=\frac{\eta}{ h}$. Thus
 $$\mathsf T(F)=\mathsf k_\Psi^c\circ (\mathsf k_\Phi^c)'(F).$$

\begin{prop}\label{projection}
The orthogonal projection $\mathsf T$ is bounded on $L_p(\N; H^c)$ for $1<p<\8$. A similar statement also holds for $H=L_2(\Ga,\frac{dtd\e}{\e^{d+1}})$ which corresponds to the conic square function.
\end{prop}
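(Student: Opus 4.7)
The plan is to exploit the factorization $\mathsf T=\mathsf k_\Psi^c\circ(\mathsf k_\Phi^c)'$ derived just above the statement, in combination with Theorem~\ref{equivalence Hp} and Mei's $\H_p^c$-$\H_q^c$ duality for $1<p<\8$.

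First, Theorem~\ref{equivalence Hp} gives
$$\|\mathsf k_\Phi^c(g)\|_{L_p(\N;H^c)}=\|s_\Phi^c(g)\|_{L_p(\N)}\approx\|g\|_{\H_p^c},$$
so $\mathsf k_\Phi^c:\H_p^c(\real^d,\M)\to L_p(\N;H^c)$ is bounded for every $1\le p<\8$. An inspection of the construction of $\Psi$ just after \eqref{reproduce} shows that $\Psi$ also fulfills the nondegeneracy condition \eqref{schwartz}, so the same theorem applies to it, and $\mathsf k_\Psi^c:\H_p^c(\real^d,\M)\to L_p(\N;H^c)$ is bounded for all $1\le p<\8$.

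Next, fix $1<p<\8$ with conjugate exponent $q$ and dualize. A direct Fubini-type computation (parallel to the one yielding $f=(\mathsf k_\Phi^c)'(F)$ just before the statement) shows that $(\mathsf k_\Phi^c)'$ coincides with the Banach-space adjoint of $\mathsf k_\Phi^c$ under the natural trace pairings. Combined with Mei's identification $(\H_q^c)^*=\H_p^c$, the previous paragraph dualizes to the boundedness of $(\mathsf k_\Phi^c)':L_p(\N;H^c)\to\H_p^c(\real^d,\M)$. Concretely, for $F$ in the dense subspace $L_p(\N;H^c)\cap L_2(\N;H^c)$ and $f=(\mathsf k_\Phi^c)'(F)\in L_2(\N)$, Mei's duality yields
$$\|f\|_{\H_p^c}\approx\sup_{\|g\|_{\H_q^c}\le1}\Big|\tau\int f(s)g(s)^*ds\Big|=\sup_g\big|\la F,\mathsf k_\Phi^c(g)\ra\big|\les\|F\|_{L_p(\N;H^c)}\sup_g\|g\|_{\H_q^c}\les\|F\|_{L_p(\N;H^c)}.$$

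Composing the two bounded maps,
$$\mathsf T=\mathsf k_\Psi^c\circ(\mathsf k_\Phi^c)':L_p(\N;H^c)\longrightarrow\H_p^c(\real^d,\M)\longrightarrow L_p(\N;H^c)$$
is bounded for every $1<p<\8$. The conic case $H=L_2(\Ga,\frac{dtd\e}{\e^{d+1}})$ is treated identically: Theorem~\ref{equivalence Hp} provides $\|S_\Phi^c(f)\|_{L_p(\N)}\approx\|f\|_{\H_p^c}$, and the orthogonal projection onto the corresponding $\mathcal T_2$ factorizes as $\mathsf k_\Psi^c\circ(\mathsf k_\Phi^c)'$ with $\mathsf k_\Phi^c(f)(s)(t,\e)=\Phi_\e* f(s+t)$ for $(t,\e)\in\Ga$. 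I expect the main obstacle to be the duality bookkeeping: one must verify that the concrete operator $(\mathsf k_\Phi^c)'$ defined in the paper really is the Banach-space adjoint of $\mathsf k_\Phi^c$ with respect to compatible trace pairings on both the $L_p(\N;H^c)$-$L_q(\N;H^c)$ and $\H_p^c$-$\H_q^c$ sides, and that Mei's duality is taken with respect to the same pairing. Once this is in place, the whole analytic content reduces to two applications of Theorem~\ref{equivalence Hp}.
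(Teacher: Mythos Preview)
Your proof is correct and takes a somewhat different route from the paper's. Both arguments rest on the factorization $\mathsf T=\mathsf k_\Psi^c\circ(\mathsf k_\Phi^c)'$ and on Theorem~\ref{equivalence Hp} applied to $\Psi$ to bound $\mathsf k_\Psi^c:\H_p^c\to L_p(\N;H^c)$ (your check that $\Psi$ inherits \eqref{schwartz} is needed here and is correct). The difference is in how $(\mathsf k_\Phi^c)':L_p(\N;H^c)\to\H_p^c$ is handled. The paper restricts to $p>2$ and routes through Corollary~\ref{CZHdual}~i), which gives $(\mathsf k_\Phi^c)':\H_p^c(\real^d,H^c\overline\ot\M)\to\H_p^c(\real^d,\M)$, combined with the embedding $L_p(\N;H^c)\hookrightarrow\H_p^c(\real^d,H^c\overline\ot\M)$ valid for $p\ge2$; the range $p<2$ then follows by self-adjointness of $\mathsf T$. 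You instead obtain the bound for all $1<p<\8$ at once by dualizing the boundedness $\mathsf k_\Phi^c:\H_q^c\to L_q(\N;H^c)$ coming from Theorem~\ref{equivalence Hp}. Your bookkeeping concern resolves cleanly: for $1<p\le2$, Mei's Theorems~4.4 and~4.7 give $(\H_p^c)^*=\BMO_q^c=\H_q^c$, and Hahn--Banach then yields the norming $\|f\|_{\H_p^c}\approx\sup_{\|g\|_{\H_q^c}\le1}|\la f,g\ra|$; for $p>2$ the same results give $(\H_q^c)^*=\BMO_p^c=\H_p^c$ directly. Your packaging is more economical in that it bypasses the Calder\'on--Zygmund machinery of Corollary~\ref{CZHdual} and the intermediate $H^c\overline\ot\M$-valued Hardy space; the paper's route has the minor advantage of staying closer to operator estimates already established earlier in the section.
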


\begin{proof}
 It suffices to consider the case $p>2$. Let $F\in L_p(\N; H^c)$. Then by Theorem~\ref{equivalence Hp} and Corollary~\ref{CZHdual}, we have
  $$
  \big\|\mathsf T(F)\big\|_{L_p(\N; H^c)}=\big\|s^c_\Psi\big( (\mathsf k_\Phi^c)'(F)\big)\big\|_{L_p(\N)}
  \les \big\|(\mathsf k_\Phi^c)'(F)\big\|_{\H_p^c(\real^d,\M)}
   \les \big\|F\big\|_{\H_p^c(\real^d,H^c\overline\ot\M)}\,.
 $$
 However, 
  $$\big\|F\big\|_{\H_p^c(\real^d,H^c\overline\ot\M)}\les \big\|F\big\|_{L_p(\N; H^c)}\;\text{ for }2\le p<\8.$$
Therefore,
 $$\big\|\mathsf T(F)\big\|_{L_p(\N; H^c)}\les  \big\|F\big\|_{L_p(\N; H^c)}\,,$$
  as desired.
\end{proof}

\section{Proof of Theorem \ref{equivalence HpD}}
\label{Proof2}


In this section, the pair $(\Phi, \Psi)$ will be fixed as in \eqref{reproduceD}.
The proof of Theorem \ref{equivalence HpD} is similar to that of Theorem \ref{equivalence Hp}. We will be brief by indicating the necessary modifications. We first prove
the discrete counterparts of Lemmas \ref{S-dual}, \ref{T-dual} and \ref{s-dual}.
For any function $F:\real^d\times\ent\to L_1(\M)+L_\8(\M)$ define
 $$\mathcal{S}^{c, D}(F)=\Big(\sum_{k\in\ent} 2^{-dj}\int_{B(s,2^j)} |F (t, j)|^2 dt\Big)^{\frac12}.$$
Note that if $F(s, j)= \Phi_{2^j}*f(s)$ for some $f:\real^d\to L_1(\M)+L_\8(\M)$, then $\mathcal{S}^{c, D}(F)=S_\Phi^{c, D}(f)$, the latter being the discrete square function introduced after Theorem \ref{equivalence HpD}.  The following is the discrete analogue of Lemma~\ref{T-dual}.

\begin{lem}\label{S-dualD}
 Let $1\le p<2$ and $q$ be its conjugate index. Then for any compactly supported function $f:\real^d\times\ent\to L_1(\M)\cap L_\8(\M)$ and $g\in\BMO_q^c(\real^d,\M)$
 $$
 \Big|\tau\int_{\real^d}\sum_jf(s, j)\cdot \big(\Psi_{2^j} *g(s)\big)^*\,ds \Big|\les \|\mathcal{S}^{c,D}(f)\|_{L_p(\N)}\,\|g\|_{\BMO_q^c}\,.
 $$
 \end{lem}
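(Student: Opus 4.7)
The plan is to mimic the proof of Lemma~\ref{S-dual}, exploiting the correspondence $\int_0^\8 \cdot\,\frac{d\e}{\e} \leftrightarrow \sum_{j\in\ent}$ and $\Phi_\e * f(s) \leftrightarrow f(s,j)$ that already underlies the passage from Lemma~\ref{S-dual} to Lemma~\ref{T-dual}. Introduce the discrete truncated square functions
\[
S^D(s,k) = \Big(\sum_{j\ge k} 2^{-jd}\int_{B(s,2^j-2^{k-1})} |f(t,j)|^2\, dt\Big)^{1/2}, \quad
\overline{S}^D(s,k) = \Big(\sum_{j\ge k} 2^{-jd}\int_{B(s,2^{j-1})} |f(t,j)|^2\, dt\Big)^{1/2}.
\]
Both are decreasing in $k$, with $S^D(s,-\8) = \mathcal{S}^{c,D}(f)(s)$ and $\overline{S}^D\le S^D$. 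By Fubini, for each $j$,
\[
\int_{\real^d} f(s,j)\,\big(\Psi_{2^j}*g(s)\big)^*\,ds
= \frac{1}{|B(0,2^{j-1})|}\int_{\real^d}\int_{B(s,2^{j-1})} f(t,j)\,\big(\Psi_{2^j}*g(t)\big)^*\,dt\,ds,
\]
the exact discrete analogue of the averaging identity opening the proof of Lemma~\ref{S-dual}. Inserting the factor $S^D(s,j)^{(p-2)/2}\cdot S^D(s,j)^{(2-p)/2}$ (after approximating to make $S^D$ invertible) and applying \eqref{CS}, we bound the square of the quantity of interest by a product $\mathrm{A}\cdot\mathrm{B}$, where $\mathrm{A}$ carries $|f(t,j)|^2$ and $\mathrm{B}$ carries $|\Psi_{2^j}*g(t)|^2$.

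For $\mathrm{A}$, use $S^D(s,j)^{p-2}\le \overline{S}^D(s,j)^{p-2}$ (since $p-2<0$) together with the telescoping identity $\overline{S}^D(s,j)^2-\overline{S}^D(s,j+1)^2 = 2^{-jd}\int_{B(s,2^{j-1})}|f(t,j)|^2 dt$. Setting $b_j = \overline{S}^D(s,j)$, the scalar-style bound $b_j^{p-2}(b_j^2-b_{j+1}^2)\le 2b_j^{p-1}(b_j-b_{j+1})$ (valid in the operator sense by trace cyclicity, exactly as in Lemma~\ref{S-dual}) combined with the operator monotonicity of $x\mapsto x^{p-1}$ on $[0,\8)$ (since $0\le p-1<1$) yields $b_j^{p-1}\le b_{-\8}^{p-1}\le \mathcal{S}^{c,D}(f)(s)^{p-1}$, and telescoping then gives $\mathrm{A}\les \|\mathcal{S}^{c,D}(f)\|_p^p$.

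For $\mathrm{B}$, partition $\real^d$ into dyadic cubes $Q_{m,j}$ of side length $2^j$ with centres $c_{m,j}$, and put, for $s\in Q_{m,j}$,
\[
\mathbb{S}^D(s,j) = \Big(\sum_{k\ge j} 2^{-kd}\int_{B(c_{m,j},\, C\cdot 2^k)} |f(t,k)|^2\,dt\Big)^{1/2},
\]
with $C$ chosen so that $B(s,2^{k-1})\subseteq B(c_{m,j},C\cdot 2^k)$ for $s\in Q_{m,j}$, $k\ge j$. Then $S^D(s,j)\le\mathbb{S}^D(s,j)$ on $Q_{m,j}$, $\mathbb{S}^D(\cdot,k)$ is constant on $Q_{m,k}$, and writing $\mathbb{S}^D(s,j)^{2-p}=\sum_{k\ge j}d(s,k)$ with $d(s,k)=\mathbb{S}^D(s,k)^{2-p}-\mathbb{S}^D(s,k+1)^{2-p}\ge 0$ (since $0<2-p\le 1$), interchanging the order of summation in the same way as in Lemma~\ref{S-dual} leads to
\[
\mathrm{B}\le \tau\sum_k\sum_m\int_{Q_{m,k}} d(s,k)\Big(\sum_{j\le k} 2^{-jd}\int_{B(s,2^{j-1})} |\Psi_{2^j}*g(t)|^2\,dt\Big)ds.
\]
A final Fubini step bounds the parenthesised double sum by $|B_{m,k}|\cdot\! \sum_{j\le k}\! \int_{B_{m,k}}\! |\Psi_{2^j}*g|^2 dt / |Q_{m,k}|$, which we control by a \emph{discrete $q$-Carleson estimate}: there exists $a\in L_{q/2}(\N)_+$ with $\|a\|_{q/2}\les\|g\|_{\BMO^c_q}^2$ and $|B|^{-1}\sum_{j:\,2^j\le r_B}\int_B |\Psi_{2^j}*g(t)|^2 dt \les a(s)$ for $s\in B$. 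Proceeding exactly as in Lemma~\ref{S-dual} (H\"older in the final step) then gives $\mathrm{B}\les \|\mathcal{S}^{c,D}(f)\|_p^{2-p}\,\|g\|_{\BMO^c_q}^2$, and multiplying the estimates for $\mathrm{A}$ and $\mathrm{B}$ yields the claim.

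The main obstacle is precisely this discrete $q$-Carleson inequality, since Lemma~\ref{CarlsonlessBMOq} is formulated only for the continuous measure $|\Psi_\e*g|^2\,\frac{dtd\e}{\e}$. I expect it to be provable by running the argument of Lemma~\ref{CarlsonlessBMO} verbatim in the discrete setting: splitting $g=g_1+g_2+g_{2B}$ with $g_1=(g-g_{2B})\un_{2B}$, using the Plancherel identity $\sum_j\int |\Psi_{2^j}*g_1|^2 dt = \int\bigl(\sum_j|\wh\Psi(2^j\xi)|^2\bigr)|\wh{g_1}(\xi)|^2 d\xi \les \int|g_1|^2$ (the inner sum is bounded since $\Psi\in\mathcal{S}$), and the Schwartz decay of $\Psi_{2^j}$ for the far part $g_2$. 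Once this discrete Carleson lemma is in place, every step of the proof transcribes from Lemma~\ref{S-dual} with only the cosmetic replacement of integrals in $\e$ by sums in $j$.
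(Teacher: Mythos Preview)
Your proposal follows the paper's proof almost verbatim: same truncated square functions, same Cauchy--Schwarz split into $\mathrm{I}\cdot\mathrm{II}$, same dyadic step-function $\mathbb{S}^D$ and telescoping for $\mathrm{II}$, and the same discrete $q$-Carleson lemma (which the paper also asserts follows by rerunning the argument of Lemma~\ref{CarlsonlessBMOq} in the discrete setting).

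The one spot where your justification is too quick is the estimate of $\mathrm{A}$. The inequality $\tau\big[b_j^{p-2}(b_j^2-b_{j+1}^2)\big]\le 2\,\tau\big[b_j^{p-1}(b_j-b_{j+1})\big]$ does \emph{not} follow ``exactly as in Lemma~\ref{S-dual}'': there the continuous Leibniz rule together with trace cyclicity gives an \emph{equality} $\tau[\overline S^{p-2}\partial_\e\overline S^2]=2\tau[\overline S^{p-1}\partial_\e\overline S]$, whereas here $b_j$ and $b_{j+1}$ need not commute and one only has the trace identity
\[
\tau\big[b_j^{p-2}(b_j^2-b_{j+1}^2)\big]=\tau\big[b_j^{p-1}(b_j-b_{j+1})\big]+\tau\big[b_j^{p-2}b_{j+1}(b_j-b_{j+1})\big].
\]
The paper explicitly flags this (``the term $\mathrm{I}$ is less easy to estimate than the corresponding term $\mathrm{A}$'') and handles the cross term by factoring $\overline{\mathcal S}^{(1-p)/2}b_j^{p-2}b_{j+1}\overline{\mathcal S}^{(1-p)/2}$ as a product of three contractions, whence $\tau[b_j^{p-2}b_{j+1}(b_j-b_{j+1})]\le\tau[\overline{\mathcal S}^{p-1}(b_j-b_{j+1})]$. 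Your claimed inequality is nonetheless \emph{correct} under the trace, since
\[
\tau\big[b_j^{p-1}(b_j-b_{j+1})\big]-\tau\big[b_j^{p-2}b_{j+1}(b_j-b_{j+1})\big]
=\tau\big[(b_j-b_{j+1})\,b_j^{p-2}\,(b_j-b_{j+1})\big]\ge 0;
\]
once you supply this one line, your argument for $\mathrm{A}$ is complete (and arguably simpler than the paper's three-contractions trick).
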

\begin{proof}
 As in the continuous case, we require the  truncated version of  $\mathcal{S}_{\Phi}^{c,D}$: For $j\in\ent$ let
  \be\begin{split}
  &\mathcal{S}^{c,D}(f)(s,j)=\Big(\sum_{k\ge j} 2^{-dk}\int_{B(s,2^k-2^{j-1})} |f (t, k)|^2 dt\Big)^{\frac12}\,,\\
  &\overline{\mathcal{S}}^{c,D}(f)(s,j)=\Big(\sum_{k\ge j}2^{-dk}\int_{B(s,2^{k-1})} |f(t, k)|^2 dt\Big)^{\frac12}\,.
  \end{split}\ee
Denote $\mathcal{S}_{\Phi}^{c,D}(f)(s,j)$ and $\overline{\mathcal{S}}_{\Phi}^{c,D}(f)(s,j)$ simply by $\mathcal{S}(s,j)$ and $\overline{\mathcal{S}}(s,j)$, respectively. By approximation, we may assume that $\mathcal{S}(s,j)$  and $\overline{\mathcal{S}}(s,j)$ are invertible for every $s\in\real^d$ and $j\in\mathbb{Z}$.  By \eqref{reproduceD} and the Cauchy-Schwarz inequality,
 \be\begin{split}
 \Big|\tau &\int_{\real^d}\sum_j f(s,j)\cdot \big(\Psi_{2^j} *g(s)\big)^*\,ds \Big|^2\\
 &=\Big|\frac{2^d}{c_d}\,\tau\int_{\real^d}\sum_j 2^{-dj}\int_{B(s, 2^{j-1})} f(s,j)\cdot \big(\Psi_{2^j} *g(t)\big)^*\,dt\,ds  \Big|^2\\
 &\les \tau\int_{\real^{d}} \sum_j \mathcal{S}(s,j)^{p-2}
 \Big( 2^{-dj}\int_{B(s, 2^{j-1}))} |f(s,j)|^2 \,dt\Big)
  ds  \\
 & \,\;\cdot \tau\int_{\real^{d}} \sum_j \mathcal{S}(s,j)^{2-p}
 \Big( 2^{-dj}\int_{B(s, 2^{j-1})} |\Psi_{2^j}*g (t)|^2 \,dt\Big)
  ds \\
 &\;{\mathop =^{\rm def}}\; {\rm I} \cdot {\rm II}.
  \end{split}\ee
The term $\rm I$ is less easy to estimate than the corresponding term $\rm A$  in the proof of Lemma \ref{S-dual}. To deal with it we simply  set
$\overline{\mathcal{S}}_j=\overline{\mathcal{S}}(s,j)$ and $\overline{\mathcal{S}}=\overline{\mathcal{S}}(s,-\8)\le \mathcal{S}^{c,D}(f)(s)$. Then
 \be\begin{split}
 {\rm I}
 &\le \tau\int_{\real^{d}} \sum_j \overline{\mathcal{S}}_j^{p-2}(\overline{\mathcal{S}}_j^2-\overline{\mathcal{S}}_{j+1}^2)ds \\
 &=\tau\int_{\real^{d}} \sum_j \big[\overline{\mathcal{S}}_j^{p-1}(\overline{\mathcal{S}}_j-\overline{\mathcal{S}}_{j+1})+\overline{\mathcal{S}}_j^{p-2}\overline{\mathcal{S}}_{j+1}(\overline{\mathcal{S}}_j-\overline{\mathcal{S}}_{j+1})\big]ds.
  \end{split}\ee
Since $1\le p<2$, $\overline{\mathcal{S}}_j^{p-1}\le \overline{\mathcal{S}}^{p-1}$. So
 $$\tau\int_{\real^{d}} \sum_j \overline{\mathcal{S}}_j^{p-1}(\overline{\mathcal{S}}_j-\overline{\mathcal{S}}_{j+1})ds
 \le\tau\int_{\real^{d}} \overline{\mathcal{S}}^{p-1} \sum_j (\overline{\mathcal{S}}_j-\overline{\mathcal{S}}_{j+1})ds
 =\tau\int_{\real^{d}} \overline{\mathcal{S}}^{p}.$$
On the other hand,
 $$
 \tau\int_{\real^{d}} \sum_j\overline{\mathcal{S}}_j^{p-2}\overline{\mathcal{S}}_{j+1}(\overline{\mathcal{S}}_j-\overline{\mathcal{S}}_{j+1})ds
 =\tau\int_{\real^{d}} \sum_j \overline{\mathcal{S}}^{\frac{1-p}2}\overline{\mathcal{S}}_j^{p-2}\overline{\mathcal{S}}_{j+1}\overline{\mathcal{S}}^{\frac{1-p}2}\cdot \overline{\mathcal{S}}^{\frac{p-1}2}(\overline{\mathcal{S}}_j-\overline{\mathcal{S}}_{j+1})\overline{\mathcal{S}}^{\frac{p-1}2}ds.
 $$
However,
 $$\overline{\mathcal{S}}^{\frac{1-p}2}\overline{\mathcal{S}}_j^{p-2}\overline{\mathcal{S}}_{j+1}\overline{\mathcal{S}}^{\frac{1-p}2}
 =\overline{\mathcal{S}}^{\frac{1-p}2}\overline{\mathcal{S}}_j^{\frac{p-1}2}\cdot \overline{\mathcal{S}}_j^{\frac{p-3}2}\overline{\mathcal{S}}_{j+1}^{\frac{3-p}2}\cdot
 \overline{\mathcal{S}}_{j+1}^{\frac{p-1}2}\overline{\mathcal{S}}^{\frac{1-p}2}.$$
Note that each of the three factors on the right-hand side is a contraction. Consider, for instance, the first one:
 $$\overline{\mathcal{S}}^{\frac{1-p}2}\overline{\mathcal{S}}_j^{\frac{p-1}2}\big[\overline{\mathcal{S}}^{\frac{1-p}2}\overline{\mathcal{S}}_j^{\frac{p-1}2}\big]^*
 =\overline{\mathcal{S}}^{\frac{1-p}2}\overline{\mathcal{S}}_j^{p-1}\overline{\mathcal{S}}^{\frac{1-p}2}\le \overline{\mathcal{S}}^{\frac{1-p}2}\overline{\mathcal{S}}^{p-1}\overline{\mathcal{S}}^{\frac{1-p}2}= 1.$$
Therefore, by the H\"older inequality,
 $$
 \tau\int_{\real^{d}} \sum_j\overline{\mathcal{S}}_j^{p-2}\overline{\mathcal{S}}_{j+1}(\overline{\mathcal{S}}_j-\overline{\mathcal{S}}_{j+1})ds
 \le\tau\int_{\real^{d}} \sum_j \overline{\mathcal{S}}^{\frac{p-1}2}(\overline{\mathcal{S}}_j-\overline{S}_{j+1})\overline{\mathcal{S}}^{\frac{p-1}2}ds
 =\tau\int_{\real^{d}} \overline{\mathcal{S}}^{p}.
 $$
Combining the preceding inequalities, we get the desired estimate of I:
 $${\rm I}\le 2\tau\int_{\real^{d}} \overline{\mathcal{S}}^{p}\le2 \|\mathcal{S}^{c, D}(f)\|_p^{p}\,.$$

The estimate of the term $\rm II$ is, however,  almost identical to that of  $\rm B$ in the proof of Lemma \ref{S-dual}. There exist only two minor differences. The first one concerns the square function  $\mathbb S(s, j)$ in \eqref{squareSS}: it is now replaced by
$$
 \mathbb S(s, j)=\Big(\sum_{k\ge j+j_0}2^{-dk}\int_{B(c_{m,j}, 2^k)} |f(t, k)|^2 \,dt\Big)^{\frac12}\;\;\textrm{ if }\;\;
 s\in Q_{m,j},
 $$
where $j_0$ is the smallest integer such that $2^{j_0}\ge \sqrt d$.  Then we have $\mathcal{S}(s, j)\le \mathbb S(s, j)$. The second difference is about the Carleson characterization of $\BMO_q^c$ in Lemma~\ref{CarlsonlessBMOq}: we now use its discrete analogue. Namely,  for $g\in\BMO_q^c(\real^d,\M)$ define
 $$d\mu_D(g) = \sum_{j=-\8}^\8  |\Psi_{2^j}*g(s)|^2 ds \times d\delta_{2^j}(\e) ,$$
where $\delta_{2^j}(\e)$ is the unit Dirac mass at the point $2^j$, considered as a measure on $\real_+$.
Then $d\mu_D(g)$ is a $q$-Carleson measure on $\real^{d+1}_+$ and
 $$\Big\|\mathop{{\sup}^+}_{s\in B\subset \real^d}\frac{1}{|B|}
 \int_{T(B)}\sum_{j=-\8}^\8  |\Psi_{2^j}*g (s)|^2 ds \times d\delta_{2^j}(\e) \Big\|_{L_{\frac{q}{2}}(\N)}\les\|g\|^2_{\BMO_q^c}\,.$$
The proof of this property is the same as  that of  Lemma~\ref{CarlsonlessBMOq}.   Except these two differences, the remainder of the argument for  $\rm II$ is identical with that for B in the proof of Lemma \ref{square1less2}.  Thus we conclude  that
$${\rm II}\les\|\mathcal{S}^{c, D}(f)\|_p^{2-p}\,\|g\|_{\BMO_q^c}^2\,.$$
Hence the lemma is proved.
 \end{proof}

 \begin{lem}\label{s-dualD}
  Let $f\in\H^c_p(\real^d,\M)\cap L_2(\N)$ and $g\in \BMO_q^c(\real^d,\M)$. Then
 $$
 \Big|\tau \int _{\real^d} f(s)g^*(s)ds \Big|\les \|s^{c,D}_\Phi(f)\|_p^{\frac p 2}\,\|f\|_{\H^c_p}^{1-\frac p 2}\,\|g\|_{\BMO_q^c}\,.
 $$
 \end{lem}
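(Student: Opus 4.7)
The plan is to follow the blueprint of Lemma~\ref{s-dual} but in the dyadic setting, using the discrete reproducing identity \eqref{reproduceD} in place of \eqref{reproduce} and the just-proved Lemma~\ref{S-dualD} in place of Lemma~\ref{T-dual}. By \eqref{reproduceD} together with the operator-valued Plancherel formula \eqref{Plancherel}, write
\[
\tau\int_{\real^d} f(s)g^*(s)ds=\tau\int_{\real^d}\sum_{j\in\ent}\Phi_{2^j}*f(s)\cdot\bigl(\Psi_{2^j}*g(s)\bigr)^*ds.
\]
Introduce the truncated discrete radial square function
\[
s^{c,D}_\Phi(f)(s,j)=\Bigl(\sum_{k\ge j}|\Phi_{2^k}*f(s)|^2\Bigr)^{1/2},
\]
which, by approximation, may be assumed invertible for every $(s,j)$. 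Insert $s^{c,D}_\Phi(f)(s,j)^{(p-2)/2}\cdot s^{c,D}_\Phi(f)(s,j)^{(2-p)/2}$ between $\Phi_{2^j}*f$ and $(\Psi_{2^j}*g)^*$ and apply the operator Cauchy-Schwarz inequality to get $|\tau\int fg^*|^2\lesssim\mathrm I\cdot\mathrm{II}$, where
\[
\mathrm I=\tau\int_{\real^d}\sum_j s^{c,D}_\Phi(f)(s,j)^{p-2}|\Phi_{2^j}*f(s)|^2 ds,\quad \mathrm{II}=\tau\int_{\real^d}\sum_j s^{c,D}_\Phi(f)(s,j)^{2-p}|\Psi_{2^j}*g(s)|^2 ds.
\]

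The term $\mathrm I$ is estimated by exactly the telescoping argument used for the analogous term $\mathrm I$ in the proof of Lemma~\ref{S-dualD}: observing that $s^{c,D}_\Phi(f)(s,j)^2-s^{c,D}_\Phi(f)(s,j+1)^2=|\Phi_{2^j}*f(s)|^2$ and that $s^{c,D}_\Phi(f)(s,j)$ is decreasing in $j$ with limit $s^{c,D}_\Phi(f)(s)$ as $j\to-\infty$, one decomposes $s^{c,D}_\Phi(f)(s,j)^{p-2}(s^{c,D}_\Phi(f)(s,j)^2-s^{c,D}_\Phi(f)(s,j+1)^2)$ into two contractive pieces as in that proof and concludes $\mathrm I\lesssim\|s^{c,D}_\Phi(f)\|_p^p$.

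For $\mathrm{II}$, the key input is the discrete counterpart of Lemma~\ref{square1less2}. Its proof is pointwise: taking the estimate from Lemma~\ref{square1less2} with $\e=2^k$ and summing over $k\ge j$ gives
\[
s^{c,D}_\Phi(f)(s,j)^2\lesssim\sum_{|m|_1\le d} S^{c,D}_{D^m\Phi}(f)(s,j)^2,
\]
where $S^{c,D}_{D^m\Phi}(f)(s,j)=\bigl(\sum_{k\ge j}2^{-dk}\int_{B(s,2^k)}|(D^m\Phi)_{2^k}*f(t)|^2 dt\bigr)^{1/2}$ is the truncated discrete conic square function. Since $0<2-p\le1$, this upgrades to $s^{c,D}_\Phi(f)(s,j)^{2-p}\lesssim\sum_{|m|_1\le d}S^{c,D}_{D^m\Phi}(f)(s,j)^{2-p}$. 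Substituting into $\mathrm{II}$ and repeating, for each $m$, the dyadic decomposition and Carleson argument used for the corresponding term $\mathrm{II}$ in the proof of Lemma~\ref{S-dualD} (relying on the discrete Carleson characterization of $\BMO^c_q$ established there), one obtains $\mathrm{II}\lesssim\sum_{|m|_1\le d}\|S^{c,D}_{D^m\Phi}(f)\|_p^{2-p}\,\|g\|_{\BMO_q^c}^2$. Each $\|S^{c,D}_{D^m\Phi}(f)\|_p$ is controlled by $\|f\|_{\H^c_p}$ for $1\le p\le 2$ via Corollary~\ref{CZH} applied to the Hilbert-valued kernel $\mathsf{k}(s)=(2^{-dk/2}(D^m\Phi)_{2^k}(s-\cdot)\un_{B(\cdot,2^k)})_{k\in\ent}$ valued in $H=\ell_2(\ent;L_2(\real^d,dt))$, whose Fourier and Lipschitz bounds are standard. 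Multiplying the estimates of $\mathrm I$ and $\mathrm{II}$ and taking square roots yields the claim.

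The main obstacle is the term $\mathrm{II}$: it is precisely where one pays for the non-harmonicity of $\Phi_{2^j}*f$, forcing the detour through the discrete analog of Lemma~\ref{square1less2} and then through the discrete Carleson estimate. The telescoping for $\mathrm I$ and the Calderón-Zygmund boundedness of $S^{c,D}_{D^m\Phi}$ are routine once the correct Hilbert-valued framework is set up.
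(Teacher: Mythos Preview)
Your argument is correct and follows essentially the same route as the paper: the Cauchy--Schwarz split into $\mathrm I$ and $\mathrm{II}$, the telescoping estimate for $\mathrm I$ borrowed from Lemma~\ref{S-dualD}, the discrete adaptation of Lemma~\ref{square1less2} to bound $s^{c,D}_\Phi(f)(s,j)^{2-p}$ by the truncated conic square functions $S^{c,D}_{D^m\Phi}(f)(s,j)^{2-p}$, and then the dyadic Carleson argument from Lemma~\ref{S-dualD} to finish $\mathrm{II}$. The only quibble is notational: your Hilbert-valued kernel for $S^{c,D}_{D^m\Phi}$ should read $\mathsf k(s)=\bigl((D^m\Phi)_{2^k}(s+u)\,\un_{B(0,2^k)}(u)\bigr)_{k\in\ent}$ in $H=\bigoplus_k L_2(\real^d,2^{-dk}du)$ (so that $\|\mathsf k^c(f)(s)\|_H=S^{c,D}_{D^m\Phi}(f)(s)$), rather than the expression you wrote; with this correction the Calder\'on--Zygmund bounds are indeed standard and yield $\|S^{c,D}_{D^m\Phi}(f)\|_p\lesssim\|f\|_{\H^c_p}$ via Remark~\ref{CZHbis}.
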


\begin{proof}
 We use the truncated version of $s^{c,D}_\Phi(f)$:
 $$
 s^{c,D}_{\Phi}(f)(s,j) =\Big( \sum_{k=j}^{\8} |\Phi_{2^k}*f (s)|^2\Big)^{\frac12}\,.
 $$
The proof of Lemma~\ref{square1less2} is easily adapted to the present setting to ensure
 $$
 s^{c,D}_{\Phi}(f)(s,j)^2\les \sum_{m\in\nat_0^d, |m|_1\leq d} S^{c,D}_{D^m {\Phi}}(f)(s, j)^2\,.
 $$
Then
  $$\Big|\tau \int _{\real^d} f(s)g^*(s)ds \Big| ^2\le {\rm I'} \cdot {\rm II'},$$
where
 \be\begin{split}
 {\rm I'}&=\tau\int_{\real^d}\sum_j s_{\Phi}^{c,D}(f)(s,j) ^{p-2} |\Phi_{2^j}*f(s)|^2  ds \,,\\
 {\rm II'}&=\tau\int_{\real^d} \sum_j s_{\Phi}^{c,D}(f)(s,j)^{2-p} |\Psi_{2^j}*g(s)|^2 ds\,.
 \end{split}\ee
Both terms ${\rm I}' $ and ${\rm II}' $ are estimated exactly as before, so we have
  $${\rm I}' \le 2\|s_\Phi^c(f)\|_{p}^p\;\textrm{ and }\;
  {\rm II}'\les \|f\|_{\H^c_p}^{2-p}\,\|g\|_{\BMO_q^c}^2\,.$$
This gives the announced assertion.  \end{proof}

\begin{proof}[Proof of Theorem~\ref{equivalence HpD}]
 Armed with the preceding two lemmas and noting that Lemmas~\ref{SboundedH1toL1} and \ref{tent-complemented} also transfer to the discrete case with the same arguments, we prove Theorem~\ref{equivalence HpD} exactly in the same way as Theorem~\ref{equivalence Hp}.
 \end{proof}

\begin{rk}
 The proof of Theorem~\ref{equivalence HpD} also yields the discrete version of Theorem~\ref{CarlsonlessBMOqbis}.
 \end{rk}


\section{Proof of Theorem \ref{charact-Riesz of Poisson}}
\label{A special Phi}


We prove Theorem \ref{charact-Riesz of Poisson} in this section. First note that \eqref{D of Poisson} is a particular case of \eqref{Riesz of Poisson}. Indeed, by the inverse Fourier transform formula, we have
\be\begin{split}
 f* I^{k }(\mathrm{P})_{\e} (t)&= \int e^{{\rm i}2\pi t\cdot \xi}    \widehat{f}(\xi) |\e \xi|^{k }e^{-\e 2\pi |\xi|}  d\xi\\
&= \e^{k } \int e^{{\rm i}2\pi t\cdot \xi}  \widehat{f}(\xi) | \xi|^{k}e^{-\e 2\pi|\xi|}  d\xi\\
&= (-\frac{1}{ 2\pi})^k\e^{k } \frac{\partial^k}{\partial \e^k} \int e^{{\rm i}2\pi t\cdot \xi}  \widehat{f}(\xi)e^{-\e 2\pi|\xi|}  d\xi\\
&= (-\frac{1}{ 2\pi})^k\e^{k } \frac{\partial^k}{\partial \e^k} \big( \mathrm{P}_\e (f)(t)\big).
\end{split}\ee
Thus it remains to prove \eqref{Riesz of Poisson}. Before proceed further, let us note that \eqref{Riesz of Poisson} is the radial part of Theorem \ref{equivalence Hp} with $\Phi=I^{\a}(\mathrm{P})$.  The problem now is that this function $\Phi$ does not belong to the Schwartz class, so we cannot apply directly Theorem \ref{equivalence Hp}. However, we will show that the proof of that theorem works for this $\Phi$ too.

Reexamining the conditions of $\Phi$ that we have used in the proof of Theorem \ref{equivalence Hp}, we find that $\Phi \in \mathcal{S}$ is not necessary. Specifically, we collect all properties of $\Phi$ used there:
\begin{enumerate}[{\rm i)}]
\item Every $D^m \Phi$ with $0\leq |m|_1 \leq d$ makes $f\mapsto s_{D^m \Phi}^cf$ and $f\mapsto S_{D^m \Phi}^cf$  Calder\'on-Zygmund singular integral operators.
\item There exists a function $\Psi$ such that \eqref{reproduce} holds.
\item The above $\Psi$ makes $d\mu(f) =  |\Psi_\e*f (s)|^2\frac{d\e ds}{\e}$ a Carleson (or $q$-Carleson) measure, satisfying Theorem \ref{CarlsonlessBMOqbis}.
\end{enumerate}
Since $I^{\a }(\mathrm{P})$ is radial, one can always choose a radial Schwartz function $\Psi$ such that \eqref{reproduce} holds for $\Phi=I^{\a }(\mathrm{P})$.
Thus ii) and iii) above are fulfilled for $\Phi=I^{\a }(\mathrm{P})$.

It remains to show that $s_{D^m \Phi}^c$ and $S_{D^m \Phi}^c$ are Calder\'on-Zygmund singular integral operators.
To this end, we require a lemma.  For $\a\in\real$, let $J_\a$ be the function on $\real^d$ defined by $J_\a(s)=(1+|s|^2)^{\frac\a2}$, and let $J^\a$ be the Fourier multiplier of symbol $J_\a$. $J^\a$ is the Bessel potential of order $\a$ and $J^{\a}=(1-(2\pi)^{-2}\D)^{\frac{\a}{2}}$. The potential Sobolev space $H^\a_p(\real^d)$ of order $\a$ consists of distributions $f$ such that $J^\a(f)\in L_p(\real^d)$. It is clear that if $\a$ is a positive even integer, then the Sobolev space $W_p^\a(\real^d)$ is contained in $H^\a_p(\real^d)$.

\begin{lem}\label{Riesz of Poisson-CZ}
If $\a>0$, then $J_{d+\s}  I^{\a }(\mathrm{P})\in L_\8(\real^d) $ for some $\s>0$.
\end{lem}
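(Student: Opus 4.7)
The strategy is to analyze $\Phi := I^{\a}(\mathrm{P})$ via its Fourier transform $\wh\Phi(\xi) = |\xi|^\a e^{-2\pi|\xi|}$, showing that $\Phi$ is bounded on $\real^d$ and decays polynomially at rate $|s|^{-d-\a}$ at spatial infinity. Granting this, for any $0<\s<\a$ one has $J_{d+\s}(s)|\Phi(s)|\les(1+|s|)^{\s-\a}\in L_\8$, so picking for instance $\s=\a/2>0$ will finish the proof. The boundedness is immediate: $\wh\Phi\in L_1(\real^d)$ since $\a>0>-d$ makes it integrable at the origin and the exponential factor handles infinity, and Fourier inversion then gives $\|\Phi\|_\8\le\|\wh\Phi\|_{L_1}<\8$.

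For the decay at infinity, fix $|s|\ge1$ and introduce a smooth radial cutoff $\chi_0$ equal to $1$ on $\{|\xi|\le 1/|s|\}$ and vanishing on $\{|\xi|\ge 2/|s|\}$. Split $\wh\Phi=g_1+g_2$ with $g_1=\chi_0\wh\Phi$. The low-frequency contribution is trivial:
$$|\wh{g_1}(s)|\le\|g_1\|_{L_1}\les\int_0^{2/|s|}r^{\a+d-1}\,dr\les|s|^{-d-\a}.$$
The high-frequency piece $g_2=(1-\chi_0)\wh\Phi$ is now $C^\8$ on $\real^d$ (the cutoff has removed the origin singularity) and exponentially decaying, so integrating by parts $2N$ times using the identity $(-\D_\xi)^N e^{i2\pi s\cdot\xi}=(2\pi|s|)^{2N}e^{i2\pi s\cdot\xi}$ yields
$$|\wh{g_2}(s)|\les|s|^{-2N}\|(-\D)^N g_2\|_{L_1}.$$
From the pointwise bound $|\partial^k(|\xi|^\a e^{-2\pi|\xi|})|\les|\xi|^{\a-k}$ valid for $|\xi|\le 1$, combined with Leibniz, the terms in $(-\D)^N g_2$ where all $2N$ derivatives fall on $|\xi|^\a e^{-2\pi|\xi|}$ have $L_1$-mass on $\{1/|s|\le|\xi|\le 1\}$ bounded by $\int_{1/|s|}^1 r^{\a-2N+d-1}\,dr\les|s|^{2N-d-\a}$, as soon as $N$ is chosen so that $2N>\a+d$; the terms in which $k$ derivatives fall on $\chi_0$ acquire a factor $|s|^k$ but live on the shell $|\xi|\sim 1/|s|$ of volume $\les|s|^{-d}$, yielding the same bound $|s|^{2N-d-\a}$. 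Hence $|\wh{g_2}(s)|\les|s|^{-d-\a}$ as well.

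Combining the two parts gives $|\Phi(s)|\les(1+|s|)^{-d-\a}$, from which the lemma follows. The only delicate point will be the derivative bookkeeping in the high-frequency step: one must verify that the singular behavior of the derivatives of $|\xi|^\a$ and of the successive derivatives of the cutoff conspire to produce exactly the same power of $|s|$. This is the quantitative form of the classical principle that an isolated singularity of type $|\xi|^\a$ at the origin of an otherwise smooth symbol with rapid decay produces spatial decay of order $|s|^{-d-\a}$.
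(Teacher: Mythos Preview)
Your proof is correct and takes a genuinely different route from the paper's. The paper first treats the case $\a>1$ by a Sobolev-space argument: it squares $J_{d+1}I^\a(\mathrm P)$, passes to the Fourier side via Hausdorff--Young, and reduces to showing $\wh{I^\a(\mathrm P)}*\wh{I^\a(\mathrm P)}\in W_1^{2(d+1)}(\real^d)$, which follows from $\wh{I^\a(\mathrm P)}\in W_1^{d+1}(\real^d)$ when $\a>1$; the remaining range $0<\a\le1$ is then obtained by complex interpolation (the three-lines lemma) between the endpoints $\s=0$ and $\s=1$. Your argument instead works entirely on the spatial side by the classical symbol-decay technique: a scale-adapted frequency cutoff at $|\xi|\sim 1/|s|$, an $L_1$ bound on the low piece, and repeated integration by parts on the high piece, the derivative bookkeeping producing exactly the homogeneity $|s|^{-d-\a}$. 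Your approach is more elementary, handles all $\a>0$ uniformly in a single step without interpolation, and yields the sharp decay rate (hence the full range $0<\s<\a$); the paper's approach is more functional-analytic and stays closer to standard Sobolev machinery. One minor omission: in the high-frequency estimate you only discuss the $L_1$-mass on $\{1/|s|\le|\xi|\le1\}$, but the region $\{|\xi|\ge1\}$ contributes $O(1)$ to $\|(-\D)^N g_2\|_{L_1}$ by the exponential decay, which after division by $|s|^{2N}$ is harmless since $2N>d+\a$.
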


\begin{proof}
First, consider the case $\a>1$. Then choose $\sigma=1$. We must show that $\big[J_{d+1}  I^{\a }(\mathrm{P})]^2$ is  a bounded function.  By the Hausdorff-Young inequality, it suffices to prove that
 $$J^{2(d+1)} \big[\wh{I^{\a }(\mathrm{P})}*\wh{I^{\a }(\mathrm{P})}\big] \in L_1(\real^d),\;\text{ or equivalently},\;
 \wh{I^{\a }(\mathrm{P})}*\wh{I^{\a }(\mathrm{P})}\in H^{2(d+1)}_1(\real^d).$$
Since  $W^{2(d+1)}_1(\real^d)\subset H^{2(d+1)}_1(\real^d)$, we are further reduced to showing  $\wh{I^{\a }(\mathrm{P})}*\wh{I^{\a }(\mathrm{P})}\in W^{2(d+1)}_1(\real^d)$. By easy calculations, for any $m=(m_1,..., m_d)\in\nat_0^d$, we have (recalling that $D^m$ is the partial derivation associated to $m$)
 $$D^m\big(|\xi|^\a e^{- 2\pi|\xi|}\big) \approx |\xi|^{\a-|m|_1 }\;\text{ for }\;\xi\in \real^d\;\text{ close to } 0.$$
It follows that $\wh{I^{\a }(\mathrm{P})}\in W_1^{d+1}(\real^d)$. Now any $m\in \nat_0^d$ with $|m|_1\le 2(d+1)$ can be decomposed into a sum $\el+n$ with $|\el|_1\le d+1$ and $|n|_1\le d+1$. Then
 $$D^m\big[\wh{I^{\a }(\mathrm{P})}* \wh{I^{\a }(\mathrm{P})}\big]
 =D^\el\big[\wh{I^{\a }(\mathrm{P})}\big]*D^n\big[\wh{I^{\a }(\mathrm{P})}\big].$$
Both partial derivatives on the right-hand side belong to $L_1(\real^d)$, so does the one on the left-hand side. We then deduce that $\wh{I^{\a }(\mathrm{P})}*\wh{I^{\a }(\mathrm{P})}\in W^{2(d+1)}_1(\real^d)$, as desired.

Next, note that the above reasoning also shows that $\big[J_{d}  I^{\a }(\mathrm{P})]^2\in L_\8(\real^d)$ for all $\a>0$, so $J_{d}  I^{\a }(\mathrm{P})\in L_\8(\real^d)$ .

Finally, we use the three lines lemma to handle the case $0<\a\le1$; then $\s$ can be any number in $(0,\,\a)$. We will need to allow $\a$ to take complex values in the preceding two parts which remain valid if ${\rm Re}(\a)>1$, respectively, if ${\rm Re}(\a)>0$. Now for any complex number $z$ in the strip $\{z\in\com: 0\le{\rm Re}(z)\le 1\}$ define
  $$F(z)=e^{(z-\sigma)^2}\, J_{d+z}\, I^{\a-\sigma+z}(\mathrm{P})\,.$$
Then
 $$\sup_{b\in\real}\big\|F({\rm i}b)\big\|_{L_\8(\real^d)}< \8 \;\text{ and }\;\sup_{b\in\real}\big\|F(1+{\rm i}b)\big\|_{L_\8(\real^d)}< \8.$$
It thus follows that
 $$J_{d+\s}\,I^{\a}(\mathrm{P})=F(\sigma)\in L_\8(\real^d).$$
This completes the proof of the lemma.
 \end{proof}

To finish the proof of  Theorem \ref{charact-Riesz of Poisson}, we are left to check that both square function operators  $s_{\Phi}^c$ and $S_{\Phi}^c$ are Calder\'on-Zygmund singular integral operators for $\Phi= I^{\a }(\mathrm{P})$. Take $s_{\Phi}^c$ as example. Since $\Phi\in L_\8(\real^d)$, we have
 $$|\Phi_\e(s)| =\frac1{\e^{d}}\big|\Phi(\frac{s}{\e})\big|\lesssim \frac1{\e^{d}}\,,\quad\forall s\in \real^d, \;|s|\le\e.$$
On the other hand,  Lemma \ref{Riesz of Poisson-CZ} ensures that
 $$|\Phi_\e(s)| =\frac1{\e^{d}}\big|\Phi(\frac{s}{\e})\big|\lesssim \frac{\e^\sigma}{|s|^{d+\sigma}} \,,\quad\forall s\in \real^d, \;|s|\ge\e.$$
The above two inequalities imply
 $$\Big( \int_0^\8 \big|\Phi_\e(s)\big|^2 \frac{d\e}{\e} \Big)^{\frac{1}{2}} \lesssim  \frac1{|s|^{d}}\,,\quad\forall s\in \real^d\setminus\{0\}.$$
Moreover,  a similar argument yields
 $$\Big( \int_0^\8 \big|\nabla[\Phi_\e(s)]\big|^2 \frac{d\e}{\e} \Big)^{\frac{1}{2}} \lesssim   \frac1{|s|^{d+1}}\,,\quad\forall s\in \real^d\setminus\{0\}.$$
Therefore,  the map $s\mapsto \Phi_\cdot(s)$ is a $H$-valued Calder\'on-Zygmund kernel, where $H=L_2((0,\,\8),\,\frac{d\e}{\e})$. Thus, $s_{\Phi}^c$ can be expressed as a singular integral operator.

In the same way, we  show that $s_{D^m\Phi}^c$ and $S_{D^m\Phi}^c$  are  Calder\'on-Zygmund operators too for all $m\in\nat_0^d$. Hence, \eqref{Riesz of Poisson} is proved.


\section{Applications to tori}
\label{Applications to tori}


Mei's work \cite{Mei2007} has been extended to the torus case in  \cite{CXY2012} with a view to applications to the quantum setting. Note, however, that this extension is not straightforward. The main idea is to reduce the torus case to the Euclidean one in order to use Mei's arguments. We now recall the relevant definitions and results. Let $\T^d$ denote  the $d$-torus with normalized Haar measure $dz$, and let $\N=L_\8(\T^d)\overline\ot\M$ throughout this section.

A cube of $\T^d$ is a product $Q=I_1\times\cdots\times I_d$, where each $I_j$ is an interval (= arc) of $\T$. As in the Euclidean case, we use $|Q|$ to denote the normalized volume (= measure) of $Q$. The whole $\T^d$ is now a cube too (of volume $1$). We then define $\BMO^c(\T^d, \M)$ as the space of all $f\in L_2(\N)$ such that
 $$\|f\|_{\BMO^c}=\max\big\{\big\|f_{\T^d}\big\|_\M,\;
 \sup_{Q\subset \T^d  \text{cube}} \Big\|\frac1{|Q|}\int_Q\big|f(z)-f_Q\big|^2dz\Big\|_\M^{\frac12}\big\}<\8.$$
This  is a Banach space. The row and mixture spaces $\BMO^r(\T^d, \M)$  and $\BMO(\T^d, \M)$ are defined by taking adjoints and intersection like in the Euclidean case.

A very useful property of $\BMO^c(\T^d, \M)$ is its embedding  into $\BMO^c(\real^d, \M)$ via periodization. To state this property, we will identify $\T^d$ with the unit cube $\mathbb{I}^d=[0,\, 1)^d$ via $(e^{2 \pi \mathrm{i} s_1},\cdots , e^{2 \pi \mathrm{i} s_d})\leftrightarrow(s_1, \cdots, s_d)$. Under this identification, the addition in $\mathbb{I}^d$ is the usual addition modulo $1$ coordinatewise; an interval of $\mathbb{I}$ is either a subinterval of $\mathbb{I}$ or a union  $[b,\, 1]\cup[0,\, a]$ with $0<a<b<1$, the latter union being the interval $[b-1,\, a]$ of $\I$ (modulo $1$). So the cubes of $\mathbb{I}^d$ are exactly those of $\T^d$.  Accordingly, functions on $\T^d$ and $\mathbb{I}^d$ are identified too. Thus  $\N=L_\8(\T^d)\overline\ot\M= L_\8(\I^d)\overline\ot\M$.

Functions on $\T^d$ are $1$-periodic functions on $\real^d$, or equivalently, functions on $\I^d$ can be extended to  $1$-periodic functions on $\real^d$. We will identify functions on $\T^d$  or  $\I^d$ as  $1$-periodic functions on $\real^d$. However, for clarity and if necessary, we will write $f_{\rm pe}$ when $f$ is considered as a $1$-periodic function on $\real^d$ for $f$ on $\T^d$. It is proved in \cite{CXY2012} that  modulo constant functions, $\BMO^c(\T^d, \M)$ embeds into $\BMO^c(\real^d, \M)$ via the map $f\mapsto f_{\rm pe}$.  More precisely, for any $f\in L_2(\N)$ we have
 \beq\label{BMO periodic}\begin{split}
 \sup_{Q\subset \T^d  \text{ cube}} \Big\|\frac1{|Q|}\int_Q\big|f(z)-f_Q\big|^2dz\Big\|_\M^{\frac12}
 &=\sup_{Q\subset \I^d  \text{ cube}} \Big\|\frac1{|Q|}\int_Q\big|f(s)-f_Q\big|^2ds\Big\|_\M^{\frac12}\\
 &\approx
 \|f_{\rm pe}\|_{\BMO^c(\real^d, \M)}
 \end{split}\eeq
with relevant constants depending only on $d$. This property enables us to reduce the treatment of  $\BMO^c(\T^d, \M)$ to the Euclidean setting.

In order to give an intrinsic definition of BMO in the quantum case, we will need another characterization of $\BMO^c(\T^d, \M)$ by the circular Poisson semigroup. Let $\mathbb{P}_r$ denote the circular Poisson kernel of  $\T^d$:
 \beq\label{circular P}
 \mathbb{P}_r(z) = \sum_{m \in \ent^d }r^{|m|} z^{m}, \quad z\in\T^d,\; 0 \le r < 1.
 \eeq
Then for any $f\in L_1(\N)$, its Poisson integral is
 $$\mathbb{P}_r(f) (z)=\int_{\T^d}\mathbb{P}_r(zw^{-1})f(w)dw=
 \sum_{m \in \mathbb{Z}^d } \wh f(m) r^{|m|} z^{m}.$$
Here $\wh f$ denotes, of course, the Fourier transform of $f$:
 $$\wh f(m)=\int_{\T^d}f(z)\, z^{-m}dz.$$
It is proved in \cite{CXY2012} that
 \beq\label{Poisson-BMO}
 \sup_{Q\subset \T^d  \text{ cube}} \Big\|\frac1{|Q|}\int_Q\big|f(z)-f_Q\big|^2dz\Big\|_\M\approx
 \sup_{0\le r<1}\big\|\mathbb{P}_r(|f-\mathbb{P}_r(f)|^2)\big\|_{\N}
 \eeq
with relevant constants depending only on $d$. Thus
 $$
 \|f\|_{\mathrm{BMO}^c}\approx \max\big\{\|\wh f(0)\|_\8,\;
 \sup_{0\le r<1}\big\|\mathbb{P}_r(|f-\mathbb{P}_r(f)|^2)\big\|_{\N}^{\frac12}\big\}.
 $$

Now we turn to  the operator-valued Hardy spaces on $\T^d$ which are defined by the Littlewood-Paley or Lusin square functions associated to the circular Poisson kernel. We will use the same notation $s^c$ and $S^c$ to denote these square functions. This should not cause any confusion in concrete contexts.  For $f\in L_1(\N)+L_\8(\N)$ define
 \beq\label{LP periodic}
 s^c(f) (z)= \Big( \int_0^1 \big |\frac{\partial}{\partial r}\,\mathbb{P}_r(f)(z)\big |^2(1-r)dr\Big)^{\frac12}\,,\quad z\in\T^d.
 \eeq
This is the torus analogue of the radial square function defined by \eqref{LP}.
For $1\leq p <\infty$, let
 $$\H^c_p(\T^d, \M)=\{f\in L_1(\N)+L_\8(\N): \|f\|_{\H^c_p}<\8\},$$
where
 $$\|f\|_{\H^c_p}=\|\wh f(0)\|_{L_p(\M)} +\|s^c(f)\|_{L_p(\N)}.$$
The row Hardy space  $\H^r_p(\T^d, \M)$ is defined to be the space of all $f$ such that $f^*\in\H^c_p(\T^d, \M)$, equipped with the natural norm. Then we define
  \be
  \H_p(\T^d, \M) =
 \left \{ \begin{split}
 &\H_p^c(\T^d, \M)+ \H_p^r(\T^d, \M)& \textrm{ if }\; 1\le p<2,\\
 &\H_p^c(\T^d, \M) \cap \H_p^r(\T^d, \M)  & \textrm{ if }\; 2\le  p<\8,
 \end{split} \right.
 \ee
equipped with the sum and intersection norms, respectively.

Like in the Euclidean case, the Littlewood-Paley $g$-function above can be replaced by the Lusin area integral function. For  $z\in\T^d$ let $\D(z)$ be the Stoltz domain with vertex $z$ and aperture $2$:
 $$\D(z)=\{w\in\com^d\;:\; |z-w|\le 2(1-|w|)\}.$$
 For $f\in L_1(\N)+L_\8(\N)$ define the torus counterpart of \eqref{Lusin} by
 \beq\label{Lusin periodic}
 S^c(f) (z)= \Big(\int_{\D(z)} \big |\frac{\partial}{\partial r}\,\mathbb{P}_r(f)(rw)\big |^2\frac{dwdr}{(1-r)^{d-1}}\Big)^{\frac12}\,,
 \quad z\in\T^d,
 \eeq
where the integral is taken  on $\D(z)$ with respect to $rw\in\D(z)$ with $0\le r<1$ and $w\in\T^d$.

Like for BMO spaces, we use periodization to deal with Hardy spaces on $\T^d$ too. Following the discussion and convention before \eqref{BMO periodic}, considered as a $1$-periodic function on $\real^d$,  the Poisson integral $\mathbb P_r(f)$ of $f$ on $\T^d$ coincides with the Poisson integral $\mathrm P_\e(f)$ on $\real^d$ (the latter $f$ being viewed as a $1$-periodic function on $\real^d$). More precisely,
 $$\mathbb P_r(f)(z)=\mathrm P_\e(f_{\rm pe})(s) \; \textrm{ with }\; z=(e^{2\pi{\rm i}s_1},\cdots, e^{2\pi{\rm i}s_d})\;\textrm{ and }\; r=e^{-2\pi\e}\,.$$
This is an immediate consequence of the classical
 Poisson summation formula (see \cite[Corollary~VII.2.6]{SW1975}):
 \beq\label{Poisson summation}
 \mathbb P_r(z)=\sum_{m\in\ent^d}\mathrm P_\e(s+m)\; \textrm{ with }\; z=(e^{2\pi{\rm i}s_1},\cdots, e^{2\pi{\rm i}s_d})
 \;\textrm{ and }\; r=e^{-2\pi\e}\,.
  \eeq
In what follows, we will always assume that $z$ and $s$, $r$ and $\e$ are related as above.

The preceding periodization property of the Poisson integrals can be reformulated on $\mathbb{I}^d$.  Let $\wt{\mathrm{P}}_\e(s)$ denote the right-hand side of \eqref{Poisson summation}, that is,   $\wt{\mathrm{P}}_\e$ is the $1$-periodization of $\mathrm{P}_\e$.
With the identification between functions on $\T^d$ and $\I^d$,  we have $\wt{\mathrm{P}}_\e=\mathbb P_r$ with $ r=e^{-2\pi\e}$. Thus
 $$\mathbb P_r(f)(z)= \wt{\mathrm{P}}_\e(f)(s)= \wt{\mathrm{P}}_\e*f(s)=\int_{\mathbb{I}^d } \wt{\mathrm{P}}_\e(s-t)f(t)dt.$$
It then follows that
 \beq\label{s per}
 s^c(f_{\rm pe}) (s)= \Big(\int_0^\8 \big |\frac{\partial}{\partial \e}\, \wt{\mathrm{P}}_\e(f)(s) \big |^2\e d\e\Big)^{\frac12} \,,\quad s\in\I^d.
 \eeq
Note that here $s^c(f_{\rm pe}) $ is the radial square function on $\real^d$ defined by \eqref{LP} since $f_{\rm pe}$ is a function on $\real^d$.  Similarly,
 \beq\label{S per}
 S^c(f_{\rm pe}) (s)=
  \Big ( \int_{\Ga} \big |\frac{\partial}{\partial\e}\, \wt{\mathrm{P}}_\e(f)(s+t) \big |^2 \frac{dtd\e}{\e^{d-1}} \Big )^{\frac12}
  \,,\quad s\in\I^d.
  \eeq
The two equalities above, together with \eqref{Poisson summation}, establish the link between the square functions on $\T^d$ and $\real^d$.  More precisely, for $1\le p<\8$ and any function $f$ on $\T^d$ we have
 \beq\label{square periodic}
 \|s^c(f)\|_{L_p(\N)}\approx \|s^c(f_{\rm pe})\|_{L_p(\I^d; L_p(\M))}
 \;\text{ and }\;
 \|S^c(f)\|_{L_p(\N)}\approx \|S^c(f_{\rm pe})\|_{L_p(\I^d; L_p(\M))}\,.
 \eeq
Recall that  $L_p(\N)=L_p(\T^d; L_p(\M))$. So with the identification $\T^d\approx\I^d$,  the norms above in both sides coincide. Here, we have written explicitly $\|\,\|_{L_p(\I^d; L_p(\M))}$ in order to emphasize the fact that although $s^c(f_{\rm pe})$ and $S^c(f_{\rm pe})$ are defined on $\real^d$, the two norms on the right-hand sides are restricted only to $\I^d$.

The equivalence relations \eqref{BMO periodic} and \eqref{square periodic} allow us to reduce the treatment of $\T^d$ to that of $\real^d$, so to follow the arguments of \cite{Mei2007}. A major difference compared with \cite{Mei2007} is that all considerations are now restricted to the cube $\I^d$ instead of the whole $\real^d$. In this way, we proved in \cite{CXY2012} the following result  which is the torus counterpart of the main result of  \cite{Mei2007}.

\begin{thm}\label{H1-BMO bis}
  \begin{enumerate}[\rm (i)]
 \item The dual space of $\H_1^c(\T^d, \M)$  coincides isomorphically with $\BMO^c(\T^d, \M)$.
  \item Let $1\le p<\8$. Then for any $f\in L_1(\N)+L_\8(\N)$
 $$\big\|s^c(f) \big\|_{p}\approx \big\|S^c(f) \big\|_{p}$$
with relevant constants depending only on $d$ and $p$.
 \item  Let $1<p<\8$. Then $\H_p(\T^d, \M)=L_p(\N)$ with equivalent norms.
 \item  Let $1<p<\8$. Then
 $$(\BMO^c(\T^d, \M),\; \H^c_1(\T^d, \M))_{\frac1p}
 =\H^c_p(\T^d, \M)\;\textrm{ with equivalent norms}.$$
 \end{enumerate}
  \end{thm}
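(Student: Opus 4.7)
The overall strategy is to transfer each assertion from $\real^d$ to $\T^d$ through the periodization $f\mapsto f_{\rm pe}$, exploiting \eqref{BMO periodic}, \eqref{Poisson summation}, and \eqref{square periodic} to identify, modulo the constant term $\wh f(0)$, the torus $\BMO^c$ and $\H_p^c$ norms with their Euclidean counterparts restricted to the cube $\I^d$. The Euclidean inputs I would draw on are Mei's theorem \cite{Mei2007} together with Theorem~\ref{equivalence Hp}, Theorem~\ref{CarlsonlessBMObis}, Lemma~\ref{s-dual}, and Corollary~\ref{CZH} from the preceding sections. The recurring technical subtlety is that $f_{\rm pe}$ never lies in a global Euclidean $L_p$ space, so every Euclidean inequality must be applied in a localized or truncated form.

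I would start with part (ii). By \eqref{square periodic} the claim reduces to
\[\|s^c(f_{\rm pe})\|_{L_p(\I^d;L_p(\M))}\approx\|S^c(f_{\rm pe})\|_{L_p(\I^d;L_p(\M))}.\]
Since both square functions are $1$-periodic on $\real^d$, their $L_p$-norm on $\I^d$ equals $R^{-d}$ times the $L_p$-norm on any ball $B_R$ around the origin. I would apply the Euclidean equivalence (Theorem~\ref{equivalence Hp} with $\Phi=\partial_\e\mathrm{P}$) to the truncation $f_{\rm pe}\un_{2B_R}$, estimate the perturbation coming from $f_{\rm pe}\un_{\real^d\setminus 2B_R}$ by standard pointwise Poisson-kernel decay (its contribution to $s^c$ and $S^c$ on $B_R$ is $O(R^{-1})$ after dividing by $R^{d/p}$), and let $R\to\infty$.

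For (i), I would prove the two inclusions of $\H_1^c(\T^d,\M)^*=\BMO^c(\T^d,\M)$ separately. Given $g\in\BMO^c(\T^d,\M)$, I would analyse the pairing via the reproducing identity
\[\tau\!\int_{\T^d}\!f\,g^*\,dz = c\int_0^1\!\!\int_{\T^d}\partial_r\mathbb{P}_r(f)\,\big(\partial_r\mathbb{P}_r(g)\big)^*(1-r)\,dz\,dr + \wh f(0)\wh g(0)^*,\]
bounding the first term by mimicking Lemma~\ref{s-dual} and using the Carleson characterization \eqref{Poisson-BMO}, and the second trivially. Conversely, a bounded functional $\ell$ on $\H_1^c(\T^d,\M)$ extends, by the density of $L_2(\N)$ (which embeds continuously into $\H_1^c$ via (ii) at $p=2$), to pairing against some $g\in L_2(\N)$; testing $\ell$ against mean-zero $L_2$-atoms adapted to cubes $Q\subset\T^d$ yields, via the torus version of Theorem~\ref{CarlsonlessBMObis}, the bound $\|g\|_{\BMO^c}\les\|\ell\|$.

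Parts (iii) and (iv) then follow by standard machinery. For (iii), the map $f\mapsto\sqrt{1-r}\,\partial_r\mathbb{P}_r(f)$ is a Hilbert-valued Calder\'on-Zygmund operator on $\T^d$, its kernel being the $1$-periodization of the corresponding Euclidean kernel (which satisfies the hypotheses of Corollary~\ref{CZH}); this yields boundedness from $L_p(\N)$ to $L_p(\N;H^c)$ for $2\le p<\infty$, and the range $1<p\le 2$ follows by duality with (i). Part (iv) is complex interpolation: the torus analogue of Lemma~\ref{Hardyinterpolation} is proved by the same periodization argument, and combined with (i) it gives the announced identification. The hardest step will be the reverse direction of (i), where the separate constant $\wh g(0)$ in $\|g\|_{\BMO^c}$ must be tracked carefully and the Carleson-to-$\BMO$ argument has to be adapted to the restricted geometry of $\I^d$, tent regions near $\partial\I^d$ requiring the most care.
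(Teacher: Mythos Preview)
The paper does not prove this theorem here: it is quoted from \cite{CXY2012}, and the surrounding text only sketches the philosophy, namely that \eqref{BMO periodic} and \eqref{square periodic} reduce matters to $\I^d$ so that Mei's Euclidean arguments \cite{Mei2007} can be rerun with all considerations restricted to the unit cube. Your overall plan---periodize, localize, and invoke the $\real^d$ theory---is exactly this strategy, so at the level of approach you agree with the paper.

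Two remarks on the details you propose. First, in part~(ii) you appeal to Theorem~\ref{equivalence Hp} with $\Phi=\partial_\e\mathrm{P}$, but that theorem assumes $\Phi\in\mathcal S$, and the radial derivative of the Poisson kernel has only polynomial decay. This is harmless because the equivalence $\|s^c(g)\|_p\approx\|S^c(g)\|_p$ for the Poisson kernel on $\real^d$ is already part of Mei's original package (and is also recovered by Theorem~\ref{charact-Riesz of Poisson}); you should cite that directly rather than Theorem~\ref{equivalence Hp}. Second, your truncation scheme $g=f_{\rm pe}\un_{2B_R}$ in (ii) requires controlling $\|s^c(g)\|_{L_p(\real^d)}$ and $\|S^c(g)\|_{L_p(\real^d)}$ on \emph{all} of $\real^d$, not just on $B_R$, since the Euclidean equivalence is global. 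The contribution on the annulus $3B_R\setminus B_R$ is comparable to, not small compared with, the main term; one has to check that the resulting upper and lower constants are independent of $R$ so that the ratio survives the limit. This is doable (it is essentially what \cite{CXY2012} carries out), but it is more than the $O(R^{-1})$ tail estimate you mention. Your treatments of (i), (iii), (iv) are in line with the standard route and with what the paper indicates.
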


Like in the previous sections we wish to characterize the Hardy spaces on $\T^d$ by square functions defined by any Schwartz function instead of the Poisson kernel. Let $\Phi$ be a Schwartz function of vanishing mean and satisfying the following condition:
 \beq\label{schwartz per}
 \forall\, \xi\in\real^d\;\text{ with }\; |\xi|\ge1\;\; \exists \, \e\in (0, \,1) \;\text{ s.t. } \; \wh\Phi(\e\xi)\neq0.
 \eeq
Then there exists another Schwartz function $\Psi$ of vanishing mean such that
 $$\int_{0}^1\wh\Phi(\e\xi)\, \overline{\wh\Psi(\e\xi)}\,\frac{d\e}\e=1,\quad \xi\in\real^d\;\text{ with }\; |\xi|\ge1.$$
 Let $\wt\Phi_\e$ be the periodization of $\Phi_\e$:
 $$\wt\Phi_\e(s)=\sum_{m\in\ent^d}\Phi_\e(s+m).$$
Then for  $f\in L_1(\N)+L_\8(\N)$,
  $$
 \wt\Phi_\e (f)(s)=\int_{\mathbb{I}^d }\wt\Phi_\e(s-t)f(t)dt
 =\sum_{m \in \mathbb{Z}^d }  \wh{\Phi}(\e m)\wh{f} (m) z^{m}\,,\;\;z=(e^{2\pi{\rm i}s_1},\cdots, e^{2\pi{\rm i}s_d}).
$$
The radial and conic square functions of $f$ associated to $\Phi$ are defined by
 $$s_{\Phi}^c(f) (s)^2= \int_0^\8 \big |\wt\Phi_\e(f)(s)\big |^2\,\frac{d\e}{\e}\,,\quad
 S_{\Phi}^c(f)(s)^2= \int_{\Ga} \big |\wt\Phi_\e(f)(s+t)\big |^2\frac{dtd\e}{\e^{d+1}}\,,\quad s\in\I^d.$$
In the present case of $\T^d$, the first integral above can now be restricted to  the unit interval $(0,\, 1)$ without changing the norm of $s_{\Phi}^c(f)$ in $L_p(\N)$. More precisely, we have the following:

\begin{prop}
 Let
   $$ \wt{s^c}(f) (s)^2= \int_0^1 \big |\wt\Phi_\e(f)(s)\big |^2\,\frac{d\e}{\e}\,,\quad
 \wt{S_{\Phi}^c}(f)(s)^2= \int_{\wt{\Ga}} \big |\wt\Phi_\e(f)(s+t)\big |^2\frac{dtd\e}{\e^{d+1}}\,,$$
 where $\wt{\Ga}$ is the truncated cone: $\wt{\Ga}=\Ga\cap\big( \real^d\times (0,\, 1)\big)$. Then for any $1\le p<\8$,
  $$\|s^c(f)\|_{L_p(\N)}\approx \|\wt{s^c}(f)\|_{L_p(\N)}\,,\quad \|S^c(f)\|_{L_p(\N)}\approx \|\wt{S^c}(f)\|_{L_p(\N)}\,,$$
   where the equivalence constants depend only on $d$ and $\Phi$.
   \end{prop}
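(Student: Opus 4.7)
One direction, namely $\|\wt{s^c}(f)\|_{L_p(\N)}\le\|s^c(f)\|_{L_p(\N)}$ and $\|\wt{S^c}(f)\|_{L_p(\N)}\le\|S^c(f)\|_{L_p(\N)}$, is immediate from the pointwise inclusion of integration domains. The substance lies in dominating, in $L_p(\N)$, the ``tails''
$$
\Big(\int_1^\8 |\wt\Phi_\e(f)|^2 \frac{d\e}\e\Big)^{\!1/2} \quad\text{and}\quad \Big(\int_{\Ga\cap\{\e\ge 1\}} |\wt\Phi_\e(f)(\cdot+t)|^2 \frac{dt\,d\e}{\e^{d+1}}\Big)^{\!1/2}
$$
by the truncated square functions. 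My strategy is in two steps: (i) dominate each tail by $\|f-\wh f(0)\|_{L_p(\N)}$ using Schwartz decay of $\wh\Phi$, and (ii) dominate $\|f-\wh f(0)\|_{L_p(\N)}$ by $\|\wt{s^c}(f)\|_{L_p(\N)}$ via the reproducing formula.

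For step (i), since $\Phi\in\mathcal S$ with $\wh\Phi(0)=0$, one has $|\wh\Phi(\e m)|\les_N(1+\e|m|)^{-N}$ for every $N>0$. For $\e\ge 1$ and $m\in\ent^d\setminus\{0\}$ (so $|m|\ge 1$), summation yields $\|\wt\Phi_\e\|_{L_\8(\T^d)}\le\sum_{m\ne0}|\wh\Phi(\e m)|\les \e^{-N}$ whenever $N>d$. Viewing $\mathsf k(s)=\wt\Phi_\bullet(s)$ as a scalar kernel with values in $L_2([1,\8),d\e/\e)$, this gives $\sup_s\|\mathsf k(s)\|_{L_2}<\8$, hence $\|\mathsf k\|_{L_1(\T^d;L_2)}<\8$ because $|\T^d|=1$. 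The Minkowski inequality in the Banach space $L_p(\M;L_2^c)$ (valid for every $p\ge 1$) followed by scalar Young convolution on $\T^d$ then produces
$$
\Big\|\Big(\int_1^\8 |\wt\Phi_\e(f)|^2 \tfrac{d\e}\e\Big)^{\!1/2}\Big\|_{L_p(\N)} \les \|f-\wh f(0)\|_{L_p(\N)}, \quad 1\le p<\8,
$$
where we use $\wt\Phi_\e(f)=\wt\Phi_\e(f-\wh f(0))$ since $\wh\Phi(0)=0$. For the conic tail, periodicity of $|\wt\Phi_\e(f)|^2$ together with $\e\ge 1$ gives $\int_{|t|<\e}|\wt\Phi_\e(f)(s+t)|^2\,dt \les \e^d \int_{\I^d}|\wt\Phi_\e(f)|^2$ (independent of $s$), and the factor $\e^d$ cancels the $\e^{-(d+1)}$ in the measure, reducing the estimate to the radial one.

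For step (ii), the choice of $\Psi$ made immediately after \eqref{schwartz per} yields the reproducing identity $f-\wh f(0)=\int_0^1\wt{\overline\Psi}_{\e'}(\wt\Phi_{\e'}(f))\,d\e'/\e'$. Pairing with $h\in L_{p'}(\N)$ and using operator Cauchy-Schwarz \eqref{CS} gives
$$
\Big|\tau\!\!\int(f-\wh f(0))h^*\,ds\Big| = \Big|\int_0^1\tau\!\!\int\wt\Phi_{\e'}(f)\cdot\wt\Psi_{\e'}(h)^*\,\tfrac{d\e'}{\e'}\,ds\Big| \les \|\wt{s^c_\Phi}(f)\|_{L_p(\N)}\,\|\wt{s_\Psi}(h)\|_{L_{p'}(\N)},
$$
where $\wt{s_\Psi}(h)$ is the corresponding row/column $\Psi$-square function truncated to $(0,1)$. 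Its bound $\|\wt{s_\Psi}(h)\|_{L_{p'}(\N)}\les\|h\|_{L_{p'}(\N)}$ for $1<p'<\8$ follows from the vector-valued Calder\'on--Zygmund theory of Corollary~\ref{CZH} applied to the periodized kernel, and at the endpoint $p'=\8$ (corresponding to $p=1$) one must instead invoke the Carleson measure characterization of $\BMO^c$ (Theorem~\ref{CarlsonlessBMObis}) on the torus and use $L_\8\hookrightarrow\BMO^c$. Taking supremum over $h$ produces $\|f-\wh f(0)\|_{L_p(\N)}\les\|\wt{s^c_\Phi}(f)\|_{L_p(\N)}$, which combined with step (i) closes both equivalences. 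The main obstacle is precisely the endpoint $p=1$, where the BMO duality replaces the direct $L_\8$ square function bound, and it also requires care to justify that the row/column structures interact correctly under operator Cauchy-Schwarz when $p<2$.
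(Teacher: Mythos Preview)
Your approach is correct for $1<p<\infty$ but genuinely different from the paper's, and the endpoint $p=1$ is not convincingly handled.

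The paper's argument is entirely elementary and uniform in $p$. It does not pass through $\|f-\wh f(0)\|_{L_p(\N)}$ at all. Instead, it observes that for any fixed $m\in\ent^d\setminus\{0\}$, testing the $L_p(\N)$ norm of the truncated square function against the single character $z^m$ gives
\[
\|\wt{s^c}(f)\|_{L_p(\N)}\ge\Big(\int_0^1|\wh\Phi(\e m)|^2\,\frac{d\e}\e\Big)^{1/2}\,\|\wh f(m)\|_{L_p(\M)}\ge a\,\|\wh f(m)\|_{L_p(\M)},
\]
where $a=\inf_{|\xi|=1}(\int_0^1|\wh\Phi(\e\xi)|^2\,d\e/\e)^{1/2}>0$ by \eqref{schwartz per} and compactness. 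So $\sup_m\|\wh f(m)\|_{L_p(\M)}\les\|\wt{s^c}(f)\|_{L_p(\N)}$. The tail is then bounded by the triangle inequality in $L_p(\N;L_2^c)$ as a sum over $m\neq0$ of $\|\wh f(m)\|_{L_p(\M)}(\int_1^\infty|\wh\Phi(\e m)|^2\,d\e/\e)^{1/2}$, and Schwartz decay makes $\sum_m(\int_1^\infty|\wh\Phi(\e m)|^2\,d\e/\e)^{1/2}$ finite. No duality, no Calder\'on--Zygmund theory, no case splitting on $p$.

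Your step~(ii) at $p=1$ has a gap: the Carleson measure characterization (Theorem~\ref{CarlsonlessBMObis}) controls $\sup_B|B|^{-1}\int_{T(B)}|\Psi_\e*h|^2\,ds\,d\e/\e$, not $\big\|\int_0^1|\wt\Psi_\e(h)|^2\,d\e/\e\big\|_{L_\infty}$, and these are genuinely different quantities. The H\"older pairing you wrote requires the latter, so invoking ``$L_\infty\hookrightarrow\BMO^c$'' does not close the estimate. One could try to import the full $\H_1$--$\BMO$ machinery (Lemmas~\ref{S-dual}--\ref{s-dual}) to the torus setting to salvage this, but that is far heavier than what the proposition deserves, and in any case those lemmas involve $\|f\|_{\H_p^c}$ on the right-hand side, which would be circular here since this proposition precedes Theorem~\ref{HpPhi}. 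The paper's Fourier-coefficient argument bypasses all of this.
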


 \begin{proof}
  The proof is elementary. We consider only the radial square function, the conic one being treated similarly. Fix an $f\in L_1(\N)$ with $\wh f(0)=0$. For any $m\in\ent^d\setminus\{0\}$, we have
   \be\begin{split}
   \|\wt{s^c}(f)\|_{L_p(\N)}
   &\ge\|\wh f(m)\|_{L_p(\M)}\,\Big(\int_0^1 \big|\wh\Phi(\e m)\big|^2\,\frac{d\e}\e\Big)^{\frac12}\\
   &\ge \|\wh f(m)\|_{L_p(\M)}\,\Big(\int_0^{|m|} \big|\wh\Phi(\e \frac{m}{|m|})\big|^2\,\frac{d\e}\e\Big)^{\frac12}\\
   &\ge a\,\|\wh f(m)\|_{L_p(\M)}\,,
   \end{split}\ee
  where
  $$a=\inf_{\xi\in\real^d,\, |\xi|=1}\, \Big(\int_0^{1}\big|\wh\Phi(\e \xi)\big|^2\frac{d\e}\e\Big)^{\frac12}\,.$$
 The assumption \eqref{schwartz per} ensures that the above integral is positive for every $\xi$ in the unit sphere of $\real^d$; so by continuity and compactness, $a>0$. Thus
   $$\sup_{m\in\ent^d\setminus\{0\}}\|\wh f(m)\|_{L_p(\M)}\le \frac1a\,\|\wt{s^c}(f)\|_{L_p(\N)}\,.$$
  On the other hand, for sufficiently large $\s$,
   \be\begin{split}
   \Big\|\Big(\int_1^\8 \big |\wt\Phi_\e(f)(s)\big |^2\,\frac{d\e}{\e}\Big)^{\frac12}\Big\|_{L_p(\N)}
   &\le \sum_{m\in\ent^d\setminus\{0\}} \|\wh f(m)\|_{L_p(\M)}\,\Big(\int_1^\8 \big|\wh\Phi(\e m)\big|^2\,\frac{d\e}\e\Big)^{\frac12}\\
   &\les\sum_{m\in\ent^d\setminus\{0\}} \|\wh f(m)\|_{L_p(\M)}\,\Big(\int_1^\8 \frac1{|\e m|^{2\s}}\,\frac{d\e}\e\Big)^{\frac12}\\
   &\les\sup_{m\in\ent^d\setminus\{0\}}\|\wh f(m)\|_{L_p(\M)} \,\sum_{m\in\ent^d\setminus\{0\}} \frac1{|m|^\s}\\
   &\les\sup_{m\in\ent^d\setminus\{0\}} \|\wh f(m)\|_{L_p(\M)}\,,
   \end{split}\ee
  We then deduce the desired assertion.
   \end{proof}

Combining the preceding periodization argument and those of section~\ref{Proof1}, we obtain the following characterization of $\H^c_p(\T^d,\M)$ by $s^c_\Phi$ and $S^c_\Phi$, which is the main result of this section.

 \begin{thm}\label{HpPhi}
Let  $1\le p<\8$ and $f\in L_1(\N)+L_\8(\N)$. Then
$$\|f\|_{\mathcal{H}_p^c}\approx \|\wh f(0)\|_{L_p(\M)}+ \| s^c_\Phi(f)\|_{L_p(\N)}
\approx \|\wh f(0)\|_{L_p(\M)}+ \| S_{\Phi}^c(f)\|_{L_p(\N)}\,.$$
\end{thm}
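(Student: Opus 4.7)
The strategy is to port the proof of Theorem \ref{equivalence Hp} to the torus setting via the periodization identities $\wt\Phi_\e*f=\Phi_\e*f_{\rm pe}$ (an immediate consequence of Fubini and the periodicity of $f$) and $\mathbb{P}_r(f)=\mathrm{P}_\e*f_{\rm pe}$ (see \eqref{Poisson summation}), which identify the torus square functions of $f$ with the Euclidean square functions of $f_{\rm pe}$ restricted to $\I^d$. Since $\Phi$ has vanishing mean, $\wt\Phi_\e*f$ depends only on $f-\wh f(0)$; hence we may assume $\wh f(0)=0$ and reduce the statement to $\|s_\Phi^c(f)\|_p\approx\|S_\Phi^c(f)\|_p\approx\|f\|_{\H_p^c}$. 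By the proposition preceding Theorem \ref{HpPhi}, the $\e$-integrations in $s_\Phi^c$ and $S_\Phi^c$ may be truncated to $(0,1)$, which matches the natural tents $T(B)=B\times(0,r]$ over balls $B\subset\T^d$.

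Three ingredients from Section \ref{Proof1} are needed in the torus setting. First, a torus Calder\'on-Zygmund theory: the kernel $s\mapsto(\wt\Phi_\e(s))_{\e\in(0,1)}$ (valued in $L_2((0,1),d\e/\e)$, or in the analogous conic space) has bounded Fourier transform and inherits the Lipschitz regularity of the Euclidean kernel of $\Phi$ because $\Phi\in\mathcal S$ makes the periodization sum and all its derivatives absolutely and uniformly convergent on $\I^d$; the proofs of Lemma \ref{CZ} and Corollary \ref{CZH} then go through, with Theorem \ref{H1-BMO bis}(i),(iv) playing the role of Euclidean duality and interpolation. Second, a Carleson-measure characterization of $\BMO^c(\T^d,\M)$ (and $\BMO_q^c$) by the Schwartz function $\Psi$ supplied above Theorem \ref{HpPhi}: this follows from Theorem \ref{CarlsonlessBMObis} (respectively Lemma \ref{CarlsonlessBMOq}) applied to $g_{\rm pe}$, combined with the equivalence \eqref{BMO periodic} and the identity $\wt\Psi_\e*g=\Psi_\e*g_{\rm pe}$, upon restriction to balls $B\subset\T^d$ for which $T(B)\subset\I^d\times(0,1)$. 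Third, the pointwise bound of Lemma \ref{square1less2} transcribes verbatim, since its proof is pointwise and $D^m(\wt\Phi_\e)=(\wt{D^m\Phi})_\e$.

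With these in place, the proof of Section \ref{Proof1} transcribes: for $1\le p\le 2$, the torus Calder\'on-Zygmund lemma gives $\|s_\Phi^c(f)\|_p,\|S_\Phi^c(f)\|_p\lesssim\|f\|_{\H_p^c}$; the reverse inequalities follow from the duality argument of Lemmas \ref{S-dual}, \ref{T-dual} and \ref{s-dual}, reproduced verbatim after rewriting the reproducing formula over $\e\in(0,1)$ and over $\I^d$; and for $2<p<\8$ one dualizes against $\H_q^c(\T^d,\M)$, invoking a torus tent-space complementation in the spirit of Lemma \ref{tent-complemented}, itself a consequence of Mei's noncommutative Hardy-Littlewood maximal inequality (valid on $\T^d$).

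\textbf{Main obstacle.} The chief difficulty is bookkeeping rather than new analytical input: one must check that every Carleson-type estimate in Lemmas \ref{S-dual}--\ref{s-dual} survives the truncation to $\e\in(0,1)$ and to cubes of bounded side length, and that the assumption $\wh f(0)=0$ neutralizes the failure of the reproducing formula at $\xi=0$ throughout the argument. Once these consistency checks are in place, no genuinely new ideas beyond Sections \ref{Proof1}--\ref{Carleson measures} and the BMO/Hardy foundations summarized in Theorem \ref{H1-BMO bis} are required.
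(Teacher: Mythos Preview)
Your proposal is correct and follows essentially the same approach as the paper: the paper's own proof consists of the single sentence ``Combining the preceding periodization argument and those of section~\ref{Proof1}'', and your outline is precisely a fleshed-out version of this, using the identities $\wt\Phi_\e*f=\Phi_\e*f_{\rm pe}$ and \eqref{BMO periodic}--\eqref{square periodic} to reduce to $\I^d$ and then transcribing the Section~\ref{Proof1} arguments with the appropriate truncations to $\e\in(0,1)$. Your identification of the bookkeeping (truncated Carleson estimates, $\wh f(0)=0$) as the only real issue matches the spirit of the paper's terse treatment.
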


We can also prove the discrete version of the above theorem. Define the following discrete analogue of $s^c_\Phi$ on $\T^d$ (leaving that of $S^c_\Phi$ to the reader):
 $$s^{c, D}_\Phi(f)(s)=\Big(\sum_{j\ge0}\big|\wt \Phi_{2^{-j}}(f) (s)\big|^2\Big)^{\frac12}\,,\quad s\in\I^d.$$
Like for Theorem \ref{equivalence HpD}, we now need to reinforce the assumption on $\Phi$ that is now supposed to satisfy
 \beq\label{schwartz per D}
 \wh\Phi\neq0 \;\text{ on }\; \{\xi\in\real^d\,:\, 1\le|\xi|<2\}.
 \eeq
Then there exists a Schwartz function $\Psi$ of vanishing mean such that
 $$\sum_{j\ge0}\wh\Phi(2^{-j}\xi)\,\overline{\wh\Psi(2^{-j}\xi)}=1,\quad \xi\in\real^d\;\text{ with }\; |\xi|\ge1.$$
Using the arguments of section~\ref{Proof2} and periodization, we obtain

\begin{thm}\label{HpPhiD}
Let  $1\le p<\8$ and $f\in L_1(\N)+L_\8(\N)$. Then
$$\|f\|_{\mathcal{H}_p^c}\approx \|\wh f(0)\|_{L_p(\M)}+ \| s^{c, D}_\Phi(f)\|_{L_p(\N)}\,.$$
\end{thm}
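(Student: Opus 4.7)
My plan is to mimic the proof of Theorem~\ref{HpPhi} but substitute the discrete tools of Section~\ref{Proof2} for the continuous ones of Section~\ref{Proof1}. First, since both sides of the asserted equivalence split off the mean naturally, I would reduce to the case $\wh f(0)=0$, in which case $s^{c,D}_\Phi(f)(s)^2 = \sum_{j\ge 0} |\wt\Phi_{2^{-j}}(f)(s)|^2$, and the identity $\wt\Phi_{2^{-j}}(f)(s) = \Phi_{2^{-j}} * f_{\rm pe}(s)$ for $s\in\I^d$ lets me read each summand as the action of a Euclidean kernel on the $1$-periodic extension $f_{\rm pe}$.

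Next, I would assemble the three basic ingredients on $\T^d$ that correspond to the ones used on $\real^d$ in Section~\ref{Proof2}. The first is a discrete Carleson-measure characterization of $\BMO^c(\T^d,\M)$ and $\BMO_q^c(\T^d,\M)$ associated to the periodized kernel $\wt\Psi_{2^{-j}}$ (with $\Psi$ the dual Schwartz function supplied after \eqref{schwartz per D}); this follows from the $\real^d$-case via the periodization \eqref{BMO periodic}, Lemma~\ref{CarlsonlessBMO}, and Theorem~\ref{CarlsonlessBMOqbis}, now restricted to the scales $2^{-j}\le 1$. The second is the $\H_1^c$-$\BMO^c$ duality on $\T^d$ and its interpolation consequence from Theorem~\ref{H1-BMO bis}(i),(iv). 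The third is a tent-space projection analogue of Lemma~\ref{tent-complemented} for the discrete torus tent space, proved by the same argument once Mei's noncommutative Hardy--Littlewood maximal inequality is invoked on $\T^d$.

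With these ingredients in hand, the duality strategy of Section~\ref{Proof2} reproduces almost verbatim. I would establish the torus analogues of Lemmas~\ref{S-dualD} and \ref{s-dualD} with $\BMO_q^c(\T^d,\M)$ replacing its Euclidean counterpart; the dyadic-cube decomposition works identically on $\I^d$ because, at scales $2^{-j}\le 1$, the tails of $\wt\Phi_{2^{-j}}$ decay as rapidly as those of $\Phi_{2^{-j}}$. Taking a supremum over $g$ in the unit ball of $\BMO_q^c(\T^d,\M)$ and invoking Theorem~\ref{H1-BMO bis}(i),(iv) then yields the case $1\le p<2$. The range $2<p<\8$ follows from the familiar H\"older-duality argument against the Hardy space of conjugate index, together with the tent-projection step and the identification $\BMO_p^c(\T^d,\M)=\H_p^c(\T^d,\M)$ valid for $2<p<\8$; the case $p=2$ is a direct Plancherel computation on the Fourier side.

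The main obstacle is that the reproducing formula holds only at nonzero frequencies, since \eqref{schwartz per D} controls $\wh\Phi$ merely on $\{|\xi|\ge 1\}$. Consequently $\Psi$ is built so that $\sum_{j\ge 0}\wh\Phi(2^{-j}\xi)\,\overline{\wh\Psi(2^{-j}\xi)}=1$ only for $|\xi|\ge 1$, and every Plancherel-type manipulation adapted from Section~\ref{Proof2} must be restricted to the orthogonal complement of the constants in $L_2(\N)$. This is transparent in the Euclidean setting but on $\T^d$ forces us to carry the mean term $\wh f(0)$ through as a separate summand throughout the argument; once this bookkeeping is in place, the remaining steps of Section~\ref{Proof2} transfer cleanly.
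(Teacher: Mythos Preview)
Your proposal is correct and follows essentially the same approach as the paper, whose proof is the single line ``Using the arguments of section~\ref{Proof2} and periodization, we obtain Theorem~\ref{HpPhiD}.'' You have simply unpacked what this entails: transfer the discrete duality Lemmas~\ref{S-dualD} and~\ref{s-dualD}, the discrete Carleson estimate, and the tent-space projection to $\T^d$ via periodization, while tracking the mean $\wh f(0)$ separately because the reproducing formula after~\eqref{schwartz per D} holds only for $|\xi|\ge1$.
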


Like for Theorem \ref{charact-Riesz of Poisson}, the function $\Phi$  can be taken to be $I^{\a}(\mathrm{P})$ with $\a>0$:

\begin{thm}\label{HpPoisson}
 Both Theorems~\ref{HpPhi} and \ref{HpPhiD} hold for $\Phi=I^{\a}(\mathrm{P})$ with $\a>0$.
 \end{thm}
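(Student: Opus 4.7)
The plan is to merge the two kinds of extensions already developed in the paper: Section~\ref{A special Phi} shows that the Euclidean Theorem~\ref{equivalence Hp} (and its discrete sibling Theorem~\ref{equivalence HpD}) continues to hold for $\Phi=I^\a(\mathrm{P})$ once three structural properties of $\Phi$ are verified, while Section~\ref{Applications to tori} transfers such results from $\real^d$ to $\T^d$ via periodization. The task is therefore to run the proofs of Theorems~\ref{HpPhi} and \ref{HpPhiD} with the Riesz potential of the Poisson kernel in place of a Schwartz function, checking that the three properties extracted in Section~\ref{A special Phi} are enough to drive the torus versions of the arguments of Sections~\ref{Proof1} and \ref{Proof2}.

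First, one selects the partner $\Psi$. Since $\wh\Phi(\xi)=|\xi|^\a e^{-2\pi|\xi|}$ is strictly positive on $\real^d\setminus\{0\}$, the nondegeneracy conditions \eqref{schwartz per} and \eqref{schwartz per D} are automatically satisfied, and one can pick a radial Schwartz function $\Psi$ of vanishing mean satisfying the corresponding reproducing formulas on $\{\xi:|\xi|\ge 1\}$. Next, one verifies the three structural properties listed at the beginning of Section~\ref{A special Phi}: (i) every $D^m\Phi$ with $|m|_1\le d$ gives a Hilbert-valued Calder\'on-Zygmund kernel; (ii) the above reproducing formula for the pair $(\Phi,\Psi)$; (iii) the Carleson-measure characterization of $\BMO^c(\real^d,\M)$ and $\BMO^c_q(\real^d,\M)$ via $\Psi$, in the sense of Theorem~\ref{CarlsonlessBMOqbis}. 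Property (i) is precisely what is shown at the end of Section~\ref{A special Phi} via Lemma~\ref{Riesz of Poisson-CZ}. Property (iii) is automatic because $\Psi$ is Schwartz, so Lemmas~\ref{CarlsonlessBMO}, \ref{CarlsonlessBMOq} and Theorem~\ref{CarlsonlessBMOqbis} apply directly.

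With (i)--(iii) in hand, one reruns the proof scheme of Theorems~\ref{HpPhi} and \ref{HpPhiD} exactly as in Section~\ref{Applications to tori}. Recall that those torus theorems are obtained by combining the periodization link (Poisson summation gives $\wt\Phi_\e*f(s)=\Phi_\e*f_{\rm pe}(s)$ for $s\in\I^d$) with the duality and interpolation arguments of Sections~\ref{Proof1} and \ref{Proof2}; the latter arguments use $\Phi$ only through the three properties above, which we have just verified for $\Phi=I^\a(\mathrm{P})$. The only point requiring care is the legitimacy of the periodization $\wt\Phi_\e=\sum_{m\in\ent^d}\Phi_\e(\cdot+m)$ and the interchanges of sum and integral it entails; this is handled by the decay $|\Phi(s)|\les (1+|s|)^{-d-\s}$ for some $\s>0$ provided by Lemma~\ref{Riesz of Poisson-CZ}, which renders all the relevant sums absolutely convergent on compact subsets of $\I^d\times(0,1]$ and ensures that the Carleson estimates together with the BMO--Poisson equivalence \eqref{Poisson-BMO} transfer verbatim. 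The delicate step is making sure that the discrete analogue in Section~\ref{Proof2} also tolerates the loss of Schwartz regularity, but the same decay of $\Phi$ and of each $D^m\Phi$ with $|m|_1\le d$ suffices there as well. This completes the proof of both the continuous and discrete assertions in Theorem~\ref{HpPoisson}.
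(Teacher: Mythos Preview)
Your proposal is correct and follows the same route the paper intends: combine the verification in Section~\ref{A special Phi} that $\Phi=I^\a(\mathrm{P})$ satisfies the three structural properties (Calder\'on--Zygmund kernel for each $D^m\Phi$, existence of a Schwartz partner $\Psi$, and the resulting Carleson/$\BMO_q^c$ machinery) with the periodization transfer of Section~\ref{Applications to tori}. The paper states Theorem~\ref{HpPoisson} without separate proof precisely because it is meant to be read as this combination; your only addition is the explicit remark that the decay $|\Phi(s)|\les(1+|s|)^{-d-\s}$ from Lemma~\ref{Riesz of Poisson-CZ} legitimizes the periodization sums, which is a reasonable point to spell out.
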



\section{Applications to quantum tori}
\label{Applications to quantum tori}


We now apply the results of the previous section to the quantum case. To this end, we first recall the relevant definitions.
Let $d\ge2$ and $\theta=(\theta_{kj})$ be a real skew symmetric $d\times d$-matrix. The associated $d$-dimensional noncommutative
torus $\mathcal{A}_{\theta}$ is the universal C*-algebra generated by $d$
unitary operators $U_1, \ldots, U_d$ satisfying the following commutation
relation
 $$U_k U_j = e^{2 \pi \mathrm{i} \theta_{k j}} U_j U_k,\quad j,k=1,\ldots, d.$$
We will use standard notation from multiple Fourier series. Let  $U=(U_1,\cdots, U_d)$. For $m=(m_1,\cdots,m_d)\in\ent^d$ define
 $$U^m=U_1^{m_1}\cdots U_d^{m_d}.$$
 A polynomial in $U$ is a finite sum
  $$ x =\sum_{m \in \mathbb{Z}^d}\alpha_{m} U^{m}\quad \text{with}\quad
 \alpha_{m} \in \mathbb{C}.$$
The involution algebra
$\mathcal{P}_{\theta}$ of all such polynomials is
dense in $\A_{\theta}.$ The functional $x\mapsto\a_0$ on $\mathcal{P}_{\theta}$ extends to a  faithful  tracial state $\tau$ on $\A_{\theta}$.  Let  $\mathbb{T}^d_{\theta}$ be the w*-closure of $\A_{\theta}$ in  the GNS representation of $\tau$. This is our $d$-dimensional quantum torus. The state $\tau$ extends to a normal faithful tracial state on $\mathbb{T}^d_{\theta}$ that will be denoted again by $\tau$.  Note that if $\theta=0$, then  $\mathbb{T}^d_{\theta}=L_\8(\T^d)$ and $\tau$ coincides with the integral on $\T^d$ against normalized Haar measure $dz$.

Any $x\in L_1(\T^d_\theta)$ admits a formal
Fourier series:
 $$x \sim \sum_{m \in\mathbb{Z}^d} \widehat{x} ( m ) U^{m}\;\text{ with }\; \widehat{x}( m) = \tau((U^m)^*x).$$

We introduced in  \cite{CXY2012}  a transference method to overcome the full noncommutativity of quantum tori and use methods of operator-valued harmonic analysis.  Let $\mathcal{N}_{\theta} = L_{\infty} (\mathbb{T}^d) \overline{\otimes}\mathbb{T}^d_{\theta}$, equipped with the tensor trace $\nu = \int dz \otimes \tau$. For each $z
\in \mathbb{T}^d,$ define $\pi_{z}$ to be the isomorphism of
$\mathbb{T}^d_{\theta}$ determined by
 $$\pi_{z} (U^{m}) = z^{m} U^{m } = z_1^{m_1} \cdots z_d^{m_d}U_1^{m_1} \cdots U_d^{m_d}. $$
This isomorphism preserves the
trace $\tau.$ Thus for every $1\le p < \8$,
 $$\|\pi_{z} (x) \|_p = \|x\|_p,\; \forall x\in L_p(\mathbb{T}^d_{\theta}).$$

The main points of the transference method are contained in the following lemma from   \cite{CXY2012}.

\begin{lem}\label{prop:TransLp}
 \begin{enumerate}[{\rm i)}]
\item For any $x \in L_p (\mathbb{T}^d_{\theta})$, the function
$\tilde{x}: z \mapsto \pi_z(x)$ is continuous from
$\mathbb{T}^d$ to $L_p(\mathbb{T}^d_{\theta})$ $($with respect to the w*-topology for $p=\8)$.
\item Let $1\le  p \leq \8.$ If $x\in L_p (\mathbb{T}^d_{\theta}),$ then $\tilde{x} \in L_p (\N_{\theta})$ and $\| \tilde{x} \|_p = \| x\|_p,$ that is, $x \mapsto \tilde{x}$ is an isometric embedding from $L_p (\mathbb{T}^d_{\theta})$ into $L_p (\N_{\theta})$.

\item Let $\widetilde{\mathbb{T}^d_{\theta}} = \{ \tilde{x}: x \in \mathbb{T}^d_{\theta}\}.$ Then $\widetilde{\mathbb{T}^d_{\theta}}$ is a von Neumann subalgebra of $\mathcal{N}_{\theta}$ and the associated conditional expectation is given by
 $$\mathbb{E} (f)(z) = \pi_{z} \Big ( \int_{\mathbb{T}^d} \pi_{\overline{w}}
 \big [ f( w )\big ] dw \Big ),\quad z\in\T^d, \; f \in \N_{\theta}.$$
Moreover, $\mathbb{E}$ extends to a contractive projection from $L_p(\N_{\theta})$ onto $L_p(\widetilde{\mathbb{T}^d_{\theta}})$ for $1\leq p\leq \infty$.
\end{enumerate}
\end{lem}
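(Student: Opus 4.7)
The plan is to exploit the density of the polynomial algebra $\mathcal{P}_\theta$ in $L_p(\mathbb{T}^d_\theta)$ (for $1\le p<\8$, and w*-density for $p=\8$) and the fact that $\pi_z$ acts transparently on finite sums: for a polynomial $x=\sum_m\a_m U^m$, one has $\pi_z(x)=\sum_m\a_m z^m U^m$, which is a finite linear combination depending continuously on $z$ in every topology. For (i), given $x\in L_p(\mathbb{T}^d_\theta)$ with $1\le p<\8$, approximate by polynomials $x_n$; since each $\pi_z$ is an $L_p$-isometry (being trace-preserving), $\wt x_n\to\wt x$ uniformly in $z$, so $\wt x$ is continuous as a uniform limit of continuous functions. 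For $p=\8$, one tests against $y\in L_1(\mathbb{T}^d_\theta)$: the scalar function $z\mapsto\tau(\pi_z(x)y)$ is continuous by the case $p=1$ applied to $\pi_z(x)y=\pi_z(x\pi_{\bar z}(y))$ after a routine reduction (or directly by checking on polynomials and using w*-density).

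For (ii), the isometric embedding for $1\le p<\8$ is a one-line Fubini computation using that $\pi_z$ is a trace-preserving $*$-automorphism:
\be
\|\wt x\|_{L_p(\N_\t)}^p=\int_{\T^d}\tau(|\pi_z(x)|^p)\,dz=\int_{\T^d}\tau(\pi_z(|x|^p))\,dz=\tau(|x|^p)=\|x\|_p^p,
\ee
while for $p=\8$ one uses $\|\pi_z(x)\|_\8=\|x\|_\8$ for every $z$ and the essential supremum definition of the $L_\8(\N_\t)$-norm.

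For (iii), I would introduce the natural action of $\T^d$ on $\N_\t$ by $\a_w(f)(z)=\pi_w(f(\bar w z))$; this is an action by $\nu$-preserving normal $*$-automorphisms. A direct computation gives $\a_w(\wt x)=\wt x$, so $\widetilde{\mathbb{T}^d_\theta}$ is contained in the fixed-point subalgebra, and conversely any $\a$-fixed $f\in\N_\t$ satisfies $f(z)=\pi_z(f(1))$, yielding the reverse inclusion. Hence $\widetilde{\mathbb{T}^d_\theta}$ is a von Neumann subalgebra, and $\mathbb{E}$ is produced as the averaging
\be
\mathbb{E}(f)(z)=\int_{\T^d}\a_w(f)(z)\,dw=\int_{\T^d}\pi_w(f(\bar w z))\,dw;
\ee
the substitution $u=\bar w z$ (legitimate by translation invariance of Haar measure) converts this to the formula stated in the lemma. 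Since each $\pi_{\bar w}$ is an $L_p$-isometry and Haar measure is a probability measure, Jensen's inequality applied to the convex function $\|\cdot\|_p^p$ on $L_p(\mathbb{T}^d_\t)$ yields
\be
\|\mathbb{E}(f)\|_{L_p(\N_\t)}^p\le\int_{\T^d}\|\pi_{\bar w}(f(w))\|_p^p\,dw=\|f\|_{L_p(\N_\t)}^p,
\ee
giving the $L_p$-contractivity for $1\le p<\8$.

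The main obstacles I anticipate are of a technical nature at the endpoint $p=\8$. For (i) the continuity must be interpreted in the w*-topology, and one should verify that w*-continuity behaves well under the integral defining $\mathbb{E}$; for (iii) the $L_\8$-contractivity together with normality of $\mathbb{E}$ is most cleanly obtained by predual duality from the $L_1$-contractivity, after checking that $\mathbb{E}$ is positive and unital. One also has to confirm the conditional expectation bimodule identity $\mathbb{E}(afb)=a\mathbb{E}(f)b$ for $a,b\in\widetilde{\mathbb{T}^d_\theta}$, which is automatic since $\mathbb{E}$ is the group-averaging against a family of $*$-automorphisms that fix $\widetilde{\mathbb{T}^d_\theta}$ pointwise.
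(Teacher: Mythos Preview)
The paper does not give a proof of this lemma; it is quoted verbatim from \cite{CXY2012} (``the following lemma from \cite{CXY2012}''), so there is no in-paper argument to compare against. Your proposal is correct and is essentially the standard argument one finds in that reference: density of $\mathcal{P}_\theta$ plus the isometry of each $\pi_z$ for (i); the one-line trace computation for (ii); and the identification of $\widetilde{\mathbb{T}^d_\theta}$ as the fixed-point algebra of the $\T^d$-action $\a_w$, with $\mathbb{E}$ realized as the Haar average, for (iii).

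Two minor remarks on (iii). First, your ``conversely'' step (an $\a$-fixed $f\in\N_\theta$ satisfies $f(z)=\pi_z(f(1))$) requires care since elements of $\N_\theta$ are only defined almost everywhere; a cleaner route is to bypass the fixed-point characterization and verify directly that $\mathbb{E}$, as given by the stated integral formula, is a normal positive unital projection onto $\widetilde{\mathbb{T}^d_\theta}$ satisfying $\nu\circ\mathbb{E}=\nu$, which forces it to be the trace-preserving conditional expectation. Second, your $L_p$-contraction line is correct once one uses $L_p(\N_\theta)=L_p(\T^d;L_p(\T^d_\theta))$ isometrically (valid since the first factor is commutative) together with $\|\mathbb{E}(f)(z)\|_p=\|\int_{\T^d}\pi_{\bar w}(f(w))\,dw\|_p$ being independent of $z$; the $p=\8$ case then follows by duality from $p=1$ plus normality, exactly as you indicate.
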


The transference method consists in the following procedure:
\be
x\in L_p(\mathbb{T}^d_{\theta}) \mapsto \tilde{x}\in
L_p(\widetilde{\mathbb{T}^d_{\theta}}) \subset
L_p(\mathcal{N}_{\theta}).\ee
This allows us to work in
$L_p(\mathcal{N}_{\theta}).$ Then using the conditional expectation
$\mathbb{E}$ to return back to
$L_p(\widetilde{\mathbb{T}^d_{\theta}}) \cong
L_p(\mathbb{T}^d_{\theta}).$

We will use the same symbol $\mathbb{P}_r$ to denote the circular Poisson kernel on the quantum torus $\T^d_\theta$ too. Thus for any $x\in L_1(\T^d_{\theta})$
 $$\mathbb{P}_r(x) = \sum_{m \in \mathbb{Z}^d } \wh{x} ( m ) r^{|m|} U^{m}, \quad 0 \le r < 1.$$
The associated Littlewood-Paley $g$-function is
 $$s^c(x) =  \Big(\int_0^1 \big|\frac{\partial}{\partial r}\,\mathbb{P}_r(x) \big |^2(1-r)dr \Big )^{\frac 1 2}.$$
We leave to the reader to formulate the analogue of the Lusin square function. For $1\leq p <\infty$ let
 $$\|x\|_{\H^c_p}=|\wh x(0)| +\|s^c(x)\|_{L_p(\mathbb{T}^d_{\theta})}.$$
The column Hardy space $\mathcal{H}^c_p(\mathbb{T}^d_{\theta})$ is defined to be
 $$\H^c_p(\mathbb{T}^d_{\theta})=\big\{x\in L_1(\T^d_{\theta}): \|x\|_{\H^c_p}<\8 \big\}.$$
On the other hand, inspired by \eqref{Poisson-BMO}, we define
 $$
 \BMO^c(\mathbb{T}_{\theta}^d) =  \big\{x\in L_2(\mathbb{T}_{\theta}^d)\;:\; \sup_{0\le r<1}
  \big\|\mathbb{P}_{r} \big(|x-\mathbb{P}_r(x)|^2\big) \big\|_{\mathbb{T}^d_{\theta}}<\infty \big\},
 $$
equipped with the norm
 $$\|x\|_{\mathrm{BMO}^c}=\max\big\{|\wh x(0)|,\;
 \sup_{0\le r<1}  \big\|\mathbb{P}_r \big(|x-\mathbb{P}_r(x)|^2 \big) \big\|_{\mathbb{T}^d_{\theta}}^{\frac12}\,\big\}.$$
The corresponding row and mixture spaces are defined similarly.

Using transference, we can easily show that the map $x\mapsto\tilde x$ in Lemma~\ref{prop:TransLp} extends to an isometric embedding from $\H^c_p(\T^d_{\theta})$ into  $\H^c_p(\T^d, \T^d_{\theta})$ for any $1\le p<\8$ and from  $\BMO^c(\mathbb{T}_{\theta}^d)$ into  $\BMO^c(\T^d, \T^d_{\theta})$. Moreover, the ranges of these embeddings are $1$-complemented in their respective spaces (see \cite{CXY2012}).
This transference result immediately implies that Theorem~\ref{H1-BMO bis} remains valid in the quantum setting. In particular,  $\H_p(\mathbb{T}^d_\theta)=L_p(\mathbb{T}^d_\theta)$ with equivalent norms for $1<p<\8$.

The same argument allows us to show that the circular Poisson kernel can be replaced by a Schwartz function $\Phi$ of vanishing mean satisfying \eqref{schwartz per}. Like in the previous section, for $x\in L_1(\T^d_{\theta})$ define
 $$\wt\Phi_{\e}(x) =\sum_{m \in \mathbb{Z}^d }  \wh{\Phi}(\e m)\wh{x} ( m ) U^{m}$$
and
 $$s_{\Phi}^c(x)^2= \int_0^1 \big |\wt\Phi_{\e}(x)\big |^2\,\frac{d\e}{\e}\,,\quad
 s^{c, D}_\Phi(x)^2=\sum_{j\ge1}\wt\Phi_{2^{-j}}(x).$$
Together with the transference, Theorems~\ref{HpPhi}, \ref{HpPhiD} and \ref{HpPoisson}  imply the following

 \begin{thm}\label{q-Phi equivalence}
 Let $1\le p<\8$.
 \begin{enumerate}[\rm i)]
 \item Assume that $\Phi\in\mathcal S$  is of zero mean and satisfies \eqref{schwartz per}. Then for any  $x\in L_1(\T^d_{\theta})$,
 $$\|x\|_{\mathcal{H}_p^c}\approx |\wh x(0)|+ \| s^{c}_\Phi(x)\|_{L_p(\T^d_{\theta})}$$
 with relevant constants depending only on  $d, p$ and  $\Phi$.
\item If additionally $\Phi$   satisfies \eqref{schwartz per D}, then $s^{c}_\Phi$ can be replaced by  $s^{c, D}_\Phi$ in the above assertion.
\item Both assertions {\rm i)} and {\rm ii)} continue to hold for $\Phi=I^{\a}(\mathrm{P})$ with $\a>0$.
\end{enumerate}
\end{thm}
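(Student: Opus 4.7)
The plan is to reduce Theorem \ref{q-Phi equivalence} to its operator-valued torus counterparts (Theorems \ref{HpPhi}, \ref{HpPhiD} and \ref{HpPoisson}, applied with $\M=\T^d_{\theta}$) by means of the transference map $x \mapsto \tilde x$ of Lemma \ref{prop:TransLp}. The paragraph preceding the statement already records that this map is an isometric embedding from $\H^c_p(\T^d_{\theta})$ into $\H^c_p(\T^d,\T^d_{\theta})$ for every $1\le p<\infty$; thus it will be enough to show that the $\Phi$-square functions transfer in a compatible way, and then to read off the equivalences from the torus-valued theorems of the previous section.

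The crucial step is the following compatibility identity:
\beq
 \wt\Phi_\e(\tilde x)(z) \;=\; \pi_z\bigl(\wt\Phi_\e(x)\bigr), \qquad z\in\T^d,\ \e>0.
\eeq
For a polynomial $x=\sum_m \wh x(m)U^m$ this is immediate from $\wt\Phi_\e(\tilde x)(z) = \sum_m \wh\Phi(\e m)\wh x(m)\, z^m U^m$. Since each $\pi_z$ is a trace-preserving $*$-isomorphism, $|\wt\Phi_\e(\tilde x)(z)|^2 = \pi_z(|\wt\Phi_\e(x)|^2)$; integrating against $\frac{d\e}{\e}$ over $(0,1)$ (or summing dyadically, for the discrete version) yields, pointwise in $z$,
\beq
 s^c_\Phi(\tilde x)(z) \;=\; \pi_z\bigl(s^c_\Phi(x)\bigr)\quad\text{and}\quad s^{c,D}_\Phi(\tilde x)(z) \;=\; \pi_z\bigl(s^{c,D}_\Phi(x)\bigr).
\eeq
Lemma~\ref{prop:TransLp}(ii) then converts these into equalities of $L_p$-norms: $\|s^c_\Phi(\tilde x)\|_{L_p(\N_{\theta})} = \|s^c_\Phi(x)\|_{L_p(\T^d_{\theta})}$ and similarly for $s^{c,D}_\Phi$. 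A direct computation also shows $\wh{\tilde x}(0)=\int_{\T^d}\pi_z(x)\,dz = \wh x(0)\cdot 1_{\T^d_{\theta}}$, so $\|\wh{\tilde x}(0)\|_{L_p(\T^d_{\theta})}=|\wh x(0)|$.

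Combining these transference identities with the isometry $\|x\|_{\H^c_p(\T^d_{\theta})} = \|\tilde x\|_{\H^c_p(\T^d,\T^d_{\theta})}$ and with Theorem~\ref{HpPhi} applied to $\tilde x$, one obtains
\beq
 \|x\|_{\H^c_p(\T^d_{\theta})} \;\approx\; |\wh x(0)| + \|s^c_\Phi(x)\|_{L_p(\T^d_{\theta})}\,,
\eeq
which is part (i). Parts (ii) and (iii) are then proved verbatim by invoking Theorem~\ref{HpPhiD} and Theorem~\ref{HpPoisson} in place of Theorem~\ref{HpPhi}. No genuine obstacle appears in this scheme: the deep analytic work has been absorbed into the torus theorems of Section~\ref{Applications to tori}, and the transference step is essentially algebraic, because $\wt\Phi_\e$ is a Fourier multiplier and therefore commutes with every $\pi_z$. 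The only point deserving routine verification is the extension of the preceding identities from $\mathcal{P}_{\theta}$ to arbitrary $x\in L_1(\T^d_{\theta})$ that fall in $\H^c_p(\T^d_{\theta})$; this is handled by density together with the lower semicontinuity of both sides of the claimed equivalence.
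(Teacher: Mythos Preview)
Your proposal is correct and follows precisely the approach the paper indicates: the paper merely states that ``Together with the transference, Theorems~\ref{HpPhi}, \ref{HpPhiD} and \ref{HpPoisson} imply'' Theorem~\ref{q-Phi equivalence}, and you have filled in exactly the expected details---the multiplier identity $\wt\Phi_\e(\tilde x)(z)=\pi_z(\wt\Phi_\e(x))$, the resulting transference of the square functions, and the invocation of the isometric embedding $\H^c_p(\T^d_\theta)\hookrightarrow\H^c_p(\T^d,\T^d_\theta)$ recorded just before the statement.
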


\n{\bf Acknowledgements.} We are very grateful to Tao Mei for many useful discussions.  We acknowledge the financial supports of ANR-2011-BS01-008-01, NSFC grant (No. 11271292 and 11301401).


\end{document}